\definecolor{purple}{rgb}{0.65, 0, 0.9}
\definecolor{orange}{rgb}{1,.5,0}
\definecolor{gray}{rgb}{0.7,.7,0.7}
\def\@abssec#1{\vspace{.1in}\footnotesize \parindent .2in
{\bf #1. }\ignorespaces}
\newtheorem{theorem}{Theorem}[section]
\newtheorem{lemma}[theorem]{Lemma}
\newtheorem{proposition}[theorem]{Proposition}
\newtheorem{corollary}[theorem]{Corollary}
\newtheorem{remark}[theorem]{Remark}
\newcommand*{\rom}[1]{\expandafter\@slowromancap\romannumeral #1@}
\newcommand{\be}{\mathbf e}
\allowdisplaybreaks \numberwithin{equation}{section}
\renewcommand{\be}{\begin{equation}}
\newcommand{\ee}{\end{equation}}
\newcommand{\colvec}[1]{\begin{pmatrix} #1 \end{pmatrix}}
\begin{document}

\title[Stability analysis]{Stability analysis of the incompressible porous media equation and the Stokes transport system via energy structure}

\author{Jaemin Park}
\address{Department Mathematik Und Informatik, Universit\"at Basel, CH-4051 Basel, Switzerland}
\email{jaemin.park@unibas.ch}

\subjclass[2020]{76S05 - 35Q35 -34D05  - 76B03}
\keywords{Asymptotic stability - incompressible fluds - porous media equation - Stokes transport system}
\thanks{\textit{Acknowledgements}. The author acknowledges  the support of the SNSF Ambizione grant No. 216083. The author also extends gratitude to Luis Mart\'inez--Zoroa and Yao Yao for fruitful conversations during the course of this research.
}

\begin{abstract}
In this paper, we revisit asymptotic stability for the two-dimensional incompressible porous media equation and the Stokes transport system in a periodic channel. It is well-known that a stratified density, which strictly decreases in the vertical direction, is asymptotically stable under sufficiently small and smooth perturbations. We provide improvements in the regularity assumptions on the perturbation and in the convergence rate. Unlike the standard approach for stability analysis relying on linearized equations, we directly address the nonlinear problem by exploiting the energy structure of each system. While it is widely known that the potential energy is a Lyapunov functional in both systems, our key observation is that the second derivative of the potential energy reveals a (degenerate) coercive structure, which arises from the fact that the solution converges to the minimizer of the energy.
\end{abstract}

\maketitle
\setcounter{tocdepth}{1}

\tableofcontents

\section{Introduction}
 In this paper, we investigate asymptotic stability in the incompressible porous media equation (IPM) and the Stokes transport system in a periodic channel $\Omega:=\mathbb{T}^2\times (0,1)$. To introduce the models, let us consider a  continuity equation with a velocity field $u(t,x)$,
 \[
 \rho_t +\nabla \cdot (u \rho)=0,\text{ in $\Omega$ and $\rho(0,x)=\rho_0(x)$,}
 \]
 for some nonnegative scalar-valued function $\rho_0$, which will be referred to as density throughout the paper. Neglecting physical constants, the velocity $u$ in each model is determined by $\rho$ via 
 \[
 \begin{cases}
  u= -\nabla p -\colvec{0 \\ \rho},\text{ with } \nabla\cdot u = 0 \text{ in $\Omega$ and } u\cdot \vec{n}=0 \text{ on $\partial\Omega$,} & \text{(IPM) }\\
   \Delta u= -\nabla  p -\colvec{0 \\ \rho},\text{ with } \nabla\cdot u \text{ in $\Omega$ and } u=0 \text{ on $\partial \Omega$}, &  \text{(Stokes) }
  \end{cases}
 \]
 where  $\vec{n}$ denotes the unit normal vector on $\partial\Omega$.   We note that, given $\rho$, the above equations  (referred to as Darcy's law and the steady Stokes equation for the IPM and the Stokes transport system, respectively) uniquely determine the velocity $u$ and the pressure $p$ (up to a constant) (See e.g. \cite[Chapter 1]{constantin1988navier} for the IPM and \cite[Chapter IV]{galdi2011introduction} for the Stokes).  
 
 Both the IPM equation and the Stokes transport system describe the evolution of transported density driven by an incompressible fluid subject to gravity. Depending on the physical context, these equations can be interpreted in various ways. However, as our primary focus is on their mathematical analysis, interested readers are directed to \cite{ingham1998transport,hofer2018sedimentation,mecherbet2018sedimentation} for a more detailed exploration of the physical motivations.

 In both equations, it is well-known that any stratified density, $\rho_s(x)=\rho_s(x_1,x_2)=\rho_s(x_2)$, which is independent of the horizontal variable $x_1\in\mathbb{T}$, is a steady state. Indeed, the vector field $(0,\rho_s(x_2))^{T}$ can be easily represented as a gradient field. Therefore the velocity determined by either Darcy's law or the steady Stokes equation corresponds to a trivial vector field $u\equiv 0$. In the paper, our stability analysis will be focused on the stratified densities satisfying the following additional condition:
 \begin{align}\label{special_density}
 \partial_2\rho_s(x_2)<0,\text{ for all $x\in \Omega$}.
 \end{align}
 In the remainder of this section, we will provide a brief overview of relevant background information and present our main results  separately for each equation.
  \subsection{The IPM equation}
 We recall that the IPM equation: For $(t,x)\in \mathbb{R}^+\times \Omega$
 \begin{align}\label{IPM}
 \text{(IPM)}=\begin{cases}
\rho_t + u\cdot\nabla \rho = 0,\\
u=-\nabla p - \colvec{0\\ \rho},\quad \nabla \cdot u =0,
\end{cases}
 \end{align}
 with boundary condition $u\cdot\vec{n}=0$ on $\partial\Omega$ and $\rho(0,x)=\rho_0(x)$.
 Thanks to the incompressibility, the velocity can be recovered in terms of the stream function $\Psi$:
\begin{align}\label{IPM_stream}
u=\nabla^\perp \Psi,\text{ where $\Psi$ is a solution to }
\begin{cases}
-\Delta \Psi = \partial_1\rho & \text{ in $\Omega$,}\\
\Psi = 0 & \text{ on $\partial \Omega$.}
\end{cases}
\end{align}
  
   We first review well-posedness theory. For strong solutions, the local well-posedness with  smooth initial data for the IPM equation \eqref{special_density} can be derived by a standard energy estimate in the case where the spatial domain $\Omega$ does not have a boundary. More precisely, when $\Omega=\mathbb{T}^2$ or $\mathbb{R}^2$, any initial data in $H^{k}$ for $k>2$ possesses a unique local-in-time solution $\rho\in C([0,T] ; H^k(\Omega))$ (see \cite{cordoba2007analytical,kiselev2023small}). However, when the boundary is present, the energy estimate becomes more involved. Especially it was proved in \cite{castro2019global} that the IPM equation in a periodic channel is locally wellposed in a subspace of $H^k(\Omega)$ for $k\ge 3$ with an additional conditions on the boundary behavior of the solution (See Theorem~\ref{lwp} for a more precise statement).  Unlike the local well-posedness, the problem of  global existence v.s. finite time blow-up for smooth initial data still remains open, while several  blow up criteria have been established in \cite{cordoba2007analytical}.  For weak solutions, the global existence of the weak solutions with $L^p$ initial data is currently not available to the best of our knowledge. The nonuniqueness in the class $L^\infty([0,T]\times \mathbb{T}^2)$ by means of the convex integration \cite{szekelyhidi2012relaxation}. We also note that the global existence of patch-type solutions (the so-called Muskat problem) with the presence of a surface tension have been established in \cite{jacobs2021weak} using the optimal transport theory. 
    
    While the global well-posedness for general initial data (sufficiently smooth) is still out of reach, there are several stability results near a particular steady state $\rho_s(x_2):=1-x_2$, which, as a byproduct, ensures the global existence. Considering the IPM equation in the spatial domain $\Omega=\mathbb{R}^2$,  it was proved in  \cite{elgindi2017asymptotic} that if $\rVert \rho_0-\rho_s\rVert_{H^k(\mathbb{R}^2)}\le \epsilon$ with $k\ge 20$, the solution converges  eventually to $\rho_s$, satisfying $\rVert \rho(t)-\rho_s\rVert_{H^3(\mathbb{R}^2)}\le C\frac{\epsilon}{(1+t)^{1/4}}$. In the same paper, the author also established asymptotic stability in $\Omega=\mathbb{T}^2$ in the class $H^{k}$ for $k\ge 20$ and proved that such perturbed solutions eventually converge to a stratified density, which might not be the same  as $\rho_s$. Stability  in a periodic channel $\Omega=\mathbb{T}\times (0,1)$ was investigated in \cite{castro2019global}, proving that  in the class $H^k(\Omega)$ for $k\ge 10$ a perturbed solution eventually converges to a stratified density, which again might not be the same as $\rho_s$. While these results requires sufficiently large regularity on the initial perturbation, the authors in \cite{bianchini2024relaxation} recently reduced such regularity assumptions, establishing asymptotic stability in the class $H^k(\mathbb{R}^2)\cap H^{1-s}(\mathbb{R}^2)$ for $k>3$ and $0<s<1$. We also mention the work \cite{jo2024quantitative} which proves asymptotic stability in a periodic channel for a perturbation in $H^{k}$ for $3 <k \in\mathbb{N}$, assuming that the vertical derivative of the steady state $\rho_s$ is sufficiently large, depending on the size of the perturbation.
    
   These results concerning  asymptotic stability reveal the difficulty to specify a permanent description of the long time limit of the solution. The main challenge stems from the fact that the equilibria of the IPM are not isolated; given two stratified densities $\rho_s(x_2)$ $f(x_2)$, any function of the form $\rho_s(x_2)+\epsilon f(x_2)$ for any $\epsilon>0$ is another equilibrium. However, as noted in \cite{dalibard2023long} the IPM equation is a transport equation with an incompressible flow, therefore as long as such a limit is achieved as a strong limit in $C^1$, each super-level set of the solution $\rho(t)$ must preserve its topological properties and the area. Given an initial data, a stratified density preserving such properties can be uniquely determined as the so-called vertical (decreasing) rearrangement $\rho^*$,
   \[
   \rho(x)\mapsto \rho^*(x):=\int_0^\infty 1_{\left\{0\le x_2 \le |\left\{ \rho > s\right\}|\right\}}ds.
   \]
 Note that such vertical rearrangement is invariant under any measure preserving continuous diffeomorphism, especially, $\rho(t)^*=\rho_0^*$.  From this property, one can easily deduce that the long-time limit that was not specified in the above earlier works must be indeed $\rho_0^*$, which can be precisely specified from the initial data.
 
  A somewhat trivial but crucial observation is that $\rho^*$ is a local minimizer of the potential energy defined as
  \[
  E_P(\rho):=\int_{\Omega}\rho(x)x_2dx.
  \]
  More precisely, one can easily deduce that for any measure preserving diffeomorphism $h:\Omega\mapsto \Omega$, it holds that
  \[
  E_P(\rho^*\circ h)\ge E_P(\rho^*),
  \]  
  and the equality is obtained if and only if $h$ is the identity map.  Moreover, the potential energy is a Lyapunov functional to the IPM equation in the sense that given a sufficiently smooth solution $\rho(t)$, namely,
  \[
  \frac{d}{dt}E_P(\rho(t)) = \int_\Omega u_2\rho dx =  \int_{\Omega}u\cdot \colvec{0 \\ \rho}dx = -\int_{\Omega}|u|^2 dx,
  \]
  where the last equality is due to Darcy's law and the incompressibility of the flow. In this view, the solution to the IPM equation can be thought of as a minimizing curve associated to the potential energy whose long-time limit is the ground state of the energy. Hence the asymptotic stability can be  obtained by achieving a sufficiently fast decay of $E_P(\rho(t))$ towards $E_P(\rho_0^*)$. Our precise statement of the main theorem for the IPM equation is as follows:
\begin{theorem}\label{main_IPM}
Let $k\in \mathbb{N}$ be such that $k\ge 3$ and $\rho_s(x_2):=1-x_2$. There exist $\epsilon=\epsilon(k)>0$ and $C=C(k)>0$ such that  if  $\rho_0-{\rho}_s\in H^k_0(\Omega)$ and $\rVert \rho_0-{\rho}_s\rVert_{H^k(\Omega)}\le \epsilon$, then there exists a unique solution $\rho\in C([0,\infty); H^k(\Omega))$ to the IPM equation \eqref{IPM} and it satisfies
\[
\rVert \rho(t)-{\rho}_s\rVert_{H^k(\Omega)}\le C\epsilon,\text{ for all $t>0$},
\]
Furthermore, the potential energy decays as 
\[
E_P(\rho(t))-E_P(\rho_0^*)\le C\epsilon^2 t^{-k}.
\] Consequently, the solution $\rho(t)$ converges to the vertical rearrangement of the initial data:
\[
\rVert \rho(t)-\rho_0^*\rVert_{L^2(\Omega)}\le C \frac{\epsilon}{t^{k/2}},\text{ for all $t>0$.}
\]
\end{theorem}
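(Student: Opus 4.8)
The plan is a continuity/bootstrap argument that turns the known Lyapunov structure of $E_P$ into a quantitative rate, the new input being that the \emph{second} derivative of $E_P$ along the flow is (degenerately) coercive. Write $\rho=\rho_s+\theta$ with $\theta\in H^k_0(\Omega)$, and recall from \eqref{IPM_stream} that $u=\nabla^\perp\Psi$ with $-\Delta\Psi=\partial_1\theta$ and $\Psi|_{\partial\Omega}=0$, so that $\theta\mapsto u$ is an order-zero (Biot--Savart type) operator, $\|u\|_{H^j(\Omega)}\lesssim\|\theta\|_{H^j(\Omega)}$, and $u$ has zero horizontal average. By the local well-posedness theorem (Theorem~\ref{lwp}) there is a unique maximal solution $\theta\in C([0,T^*);H^k_0(\Omega))$; fix a large constant $M=M(k)$ and let $T\le T^*$ be maximal with $\|\theta(t)\|_{H^k}\le M\epsilon$ on $[0,T]$. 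Since $k\ge 3$ gives $H^k\hookrightarrow W^{1,\infty}$, on $[0,T]$ we may use $\|\nabla u\|_{L^\infty}+\|\nabla\theta\|_{L^\infty}+\|\partial_2u_2\|_{L^\infty}\lesssim M\epsilon$. The aim is to improve the bootstrap bound to $\|\theta(t)\|_{H^k}\le\tfrac M2\epsilon$ on $[0,T]$, which forces $T=T^*=\infty$.

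The first identity, $\tfrac{d}{dt}E_P(\rho(t))=-\|u(t)\|_{L^2}^2$, is the known dissipation. Differentiating it along \eqref{IPM} and integrating by parts (using $u_2=\partial_1\Psi$, $\nabla\cdot u=0$, $u\cdot\vec n=0$ and $\partial_1\theta=-\Delta\Psi$) gives
\[
\frac{d^2}{dt^2}E_P(\rho(t))=2\int_\Omega(u\cdot\nabla u_2)\,\rho\,dx=-2\int_\Omega u_2^2\,\partial_2\rho\,dx-2\int_\Omega u_1u_2\,\partial_1\theta\,dx .
\]
As $\partial_2\rho=-1+\partial_2\theta$, the first term is $2\|u_2\|_{L^2}^2$ plus a cubic remainder; the algebraic identity $u_1=-\partial_2\Psi$ shows that the worst piece of the last term, $\int\partial_1\partial_2\Psi\,u_1^2=\int\partial_1\big(\tfrac13(\partial_2\Psi)^3\big)$, integrates to zero by horizontal periodicity, and what remains is $\lesssim M\epsilon\,\|u_2\|_{L^2}^2$. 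Hence for $\epsilon$ small
\[
\frac{d^2}{dt^2}E_P(\rho(t))\ \ge\ \|u_2(t)\|_{L^2}^2\qquad\text{on }[0,T].
\]
This is the ``degenerate coercivity'' of the statement: at the stratified minimizer the Hessian of $E_P$ restricted to rearrangements equals $\int_\Omega(-\partial_2\rho_s)\,v_2^2$, positive by \eqref{special_density} but seeing only the vertical velocity.

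Now combine the two identities. Since $u$ is horizontally mean free, the Biot--Savart law yields a Fourier-side interpolation $\|u\|_{L^2}^2\lesssim\|u_2\|_{L^2}^{2\alpha}\,\|u\|_{H^N}^{2(1-\alpha)}$ whose admissible exponent $\alpha$ increases with the highest available Sobolev index $N$; with $\|u\|_{H^N}\lesssim M\epsilon$ this and the coercivity give, for $y(t):=\|u(t)\|_{L^2}^2$, an autonomous differential inequality $\dot y\le -c\,(M\epsilon)^{-\sigma}\,y^{1+\gamma}$ with $\gamma=\gamma(k)>0$ dictated by $k$, hence $y(t)\lesssim(M\epsilon)^2\,t^{-(k+1)}$ and, integrating the dissipation identity,
\[
E_P(\rho(t))-\lim_{s\to\infty}E_P(\rho(s))=\int_t^\infty\|u(s)\|_{L^2}^2\,ds\ \lesssim\ \epsilon^2\,t^{-k}.
\]
The limit is $E_P(\rho_0^*)$: incompressibility of $u$ preserves all super-level-set measures of $\rho(t)$, the uniform $H^k\hookrightarrow C^1$ bound forces convergence to a stratified profile, and (as recalled in the introduction, see \cite{dalibard2023long}) the only such profile compatible with these conservation laws is the vertical rearrangement $\rho_0^*$, which is also the $E_P$-minimizer among rearrangements of $\rho_0$.

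For the $L^2$ convergence, split $\rho-\rho_0^*=(\rho-\bar\rho)+(\bar\rho-\rho_0^*)$ with $\bar\rho$ the horizontal average. The oscillating part obeys the dual interpolation $\|\rho-\bar\rho\|_{L^2}\lesssim\|u\|_{L^2}^{\frac k{k+1}}\|\theta\|_{H^k}^{\frac1{k+1}}\lesssim\epsilon\,t^{-k/2}$, and for $q:=\bar\rho-\rho_0^*$ — a mean-zero function, with $\bar\rho$ and $\rho_0^*$ both $C^1$-close to $1-x_2$, hence strictly decreasing — using $\|q'\|_{L^\infty}\lesssim M\epsilon$ and that $q$ vanishes at its sign change $x^*$ gives $\|q\|_{L^2}^2\lesssim M\epsilon\int_0^1|x_2-x^*|\,|q|\,dx_2\lesssim M\epsilon\big(E_P(\rho(t))-E_P(\rho_0^*)\big)$, so $\|\rho(t)-\rho_0^*\|_{L^2}\lesssim\epsilon\,t^{-k/2}$. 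Finally, to close the bootstrap, the standard $H^k$ estimate for \eqref{IPM} reads $\tfrac{d}{dt}\|\theta\|_{H^k}^2\lesssim\big(\|\nabla u\|_{L^\infty}+\|\nabla\theta\|_{L^\infty}\big)\|\theta\|_{H^k}^2$ once the top-order term is annihilated by incompressibility; separating off the horizontal average of $\theta$ (which does not decay but enters only through structurally tame, time-integrable terms) and feeding in the decay of $\|u(t)\|_{L^2}$ — hence, by interpolation against the $H^k$ bound, time-integrable decay of the relevant $W^{1,\infty}$-type quantities — one gets $\int_0^\infty(\cdots)\,dt\lesssim\epsilon$ and Grönwall yields $\|\theta(t)\|_{H^k}\le\|\theta_0\|_{H^k}e^{C\epsilon}\le\tfrac M2\epsilon$. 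The main obstacle is precisely this last closure at the low regularity $k\ge3$: because $\theta\mapsto u$ is only order zero, in the purely horizontal frequencies $D^ku$ is as rough as $D^k\theta$ and gains no decay from interpolation while the horizontal average of $\theta$ does not decay at all, so making every commutator term integrable in time requires exploiting the anisotropy of the dissipation (decay occurs only in the $\partial_1$-oscillating directions) together with the exact algebraic structure of the nonlinearity; pinning down the sharp exponent $\gamma(k)$ in the ODE for $\|u\|_{L^2}^2$ (equivalently the optimal Łojasiewicz-type inequality for $E_P(\rho)-E_P(\rho_0^*)$) and the reverse coercivity used for the $L^2$ rate are the other delicate points.
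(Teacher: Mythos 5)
Your scaffolding (bootstrap on $\|\theta\|_{H^k}$, the dissipation identity $\frac{d}{dt}E_P=-\|u\|_{L^2}^2$, and the coercivity $\big(\frac{d}{dt}\big)^2E_P\gtrsim\|u_2\|_{L^2}^2$) matches the paper's philosophy, but the quantitative core that produces the stated rate does not work as written. You extract decay by closing an autonomous ODE for $y=\|u\|_{L^2}^2$ from the coercivity plus an interpolation $\|u\|_{L^2}^2\lesssim\|u_2\|_{L^2}^{2\alpha}\|u\|_{H^k}^{2(1-\alpha)}$ and the uniform bound $\|u\|_{H^k}\lesssim M\epsilon$, claiming $y\lesssim(M\epsilon)^2t^{-(k+1)}$. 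Testing the interpolation with a stream function $\Psi=\sin(x_1)\sin(m\pi x_2)$, $m\to\infty$ (so $\|u_2\|_{L^2}\sim\|\Psi\|_{L^2}$, $\|u\|_{L^2}\sim m\|\Psi\|_{L^2}$, $\|u\|_{H^k}\sim m^{k+1}\|\Psi\|_{L^2}$) forces $\alpha\le k/(k+1)$, hence $\gamma=1/\alpha-1\ge 1/k$, and the autonomous ODE then gives at best $y\lesssim t^{-k}$, so $E_P(\rho(t))-E_P(\rho_0^*)=\int_t^\infty y\,ds\lesssim t^{-(k-1)}$ --- one full power of $t$ short of the theorem; your claimed exponent would require $\alpha=(k+1)/(k+2)$, which is false. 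The paper gains that extra power from two ingredients absent here: the Łojasiewicz-type input of Proposition~\ref{propoos} ($E\sim\|\rho-\rho_0^*\|_{L^2}^2$ together with $\|\partial_1\rho\|_{L^2}\ge C\|\rho-\rho_0^*\|_{L^2}$), which with Gagliardo--Nirenberg yields a differential inequality for $E$ itself, $\frac{d}{dt}E\le -C\|\nabla\Psi\|_{H^k}^{-2/(k-1)}E^{k/(k-1)}$, whose coefficient is not merely bounded but \emph{time-integrable} (integrability supplied by the dissipation $-\|\nabla\Psi\|_{H^k}^2$ retained in the energy estimate), and Lemma~\ref{ABC_ODE} (Jensen) converts that integrability into $E\lesssim\epsilon^2t^{-k}$; the second-derivative coercivity is then used not to derive the $E$-decay but to upgrade it to time-averaged decay of $K$ and $\|u_2\|_{L^2}^2$ by extra factors $t^{-1},t^{-2}$ (Lemma~\ref{ODEBDC}). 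Since Proposition~\ref{propoos} gives $\|\rho-\rho_0^*\|_{L^2}^2\le CE$, the final $L^2$ rate is immediate; by contrast your treatment of $q=\bar\rho-\rho_0^*$ presumes a single sign change of $q$ and the comparison $\int|x_2-x^*|\,|q|\lesssim E_P(\rho)-E_P(\rho_0^*)$, neither of which is justified (nor needed).

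The second gap is the closure of the bootstrap at the low regularity the theorem actually covers. With only a pointwise decay of $\|u_2\|_{L^2}\le\|u\|_{L^2}$ (at the achievable rate $\epsilon t^{-k/2}$, or even your claimed $\epsilon t^{-(k+1)/2}$), interpolation gives $\|u_2\|_{W^{1,\infty}}\lesssim\|u_2\|_{L^2}^{1-2/k}\|u\|_{H^k}^{2/k}$ decaying like $t^{-(k-2)/2}$ (resp.\ $t^{-(k+1)(k-2)/(2k)}$), which is not time-integrable at $k=3$, so the Grönwall step you invoke does not close precisely where the theorem starts; your final paragraph essentially concedes this. The paper closes it by keeping the dissipation $-\|\nabla\Psi\|_{H^k}^2$ in Proposition~\ref{energy_IPM_estimate}, absorbing part of $\|u_2\|_{W^{1,\infty}}\|\theta\|_{H^k}^2$ into it by Young, and showing the leftover $\|u_2\|_{L^2}^{(k-2)/(k-1)}$ is integrable for every $k\ge3$ using the time-averaged bound $\frac{2}{t}\int_{t/2}^{t}\|u_2\|_{L^2}^2\,ds\lesssim\epsilon^2t^{-(k+2)}$ together with Lemma~\ref{fialmass}. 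Without these steps (or substitutes of equal strength) the proposal neither reaches the rate $t^{-k}$ nor proves global control of $\|\theta\|_{H^k}$ for $k=3$.
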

A few remarks are in order: 

\begin{remark}
As mentioned earlier, several results concerning asymptotic stability for the IPM equation are available in the literature (e.g., \cites{bianchini2024relaxation,elgindi2017asymptotic,castro2019global}).  Compared to these results, our theorem requires slightly weaker regularity on the initial perturbation. Furthermore, our poof in this paper is essentially different from all of these results in that we do not rely on the linearized system but exploit the decay of the potential energy. The key observation of this paper is that the potential energy is not only a Lyapunov functional but  also reveals another (degenerate) coercive structure in its second derivative, i.e., $\left(\frac{d}{dt}\right)^2 E_P(\rho(t))\ge \rVert u_2\rVert_{L^2}^2$. See Subsection~\ref{sketch} for more detailed explanation how such property can be used in the proof. 
\end{remark}

\begin{remark}\label{remark1_regularity}
The author expects that the regularity assumption  $H^{k}$ for $k\ge3$ can be relaxed even further, and a similar strategy used this paper would work out for initial perturbation $\rho_0-\rho_s$ sufficiently small $H^{k}$ for $k> 1+\sqrt{3}$. More concrete evidence of such an expectation will be explained in Subsection~\ref{sketch}. The main reasons why we do not included such a stronger statement are because we do not want to introduce extra complications of the proof by involving fractional Sobolev spaces, and the local well-posedness of the equation is currently not directly available for lower regularity spaces (see Theorem~\ref{lwp}).  We emphasize that the perturbation regularity cannot be relaxed too much, considering the long time instability result in $H^{2-\epsilon}$ for any $\epsilon>0$ \cite[Theorem 1.5]{kiselev2023small}.  Noticing that $H^{2}$ barely fails to embed into $C^1$, it seems to be an interesting question whether an asymptotic stability can be established for a small perturbation in $H^{2+\epsilon}$.
\end{remark}

\begin{remark}\label{kinetic_statemt}
From the stream function formulation \eqref{IPM_stream}, it is straightforward to see the velocity and the density formally stay in the same regularity class, especially,  $\rVert u\rVert_{L^2}\le \rVert \rho-\overline{\rho}\rVert_{L^2}$, where $\overline{\rho}$ is the spatial average of $\rho$. However, compared to the decay rate of the density stated in the above theorem, our argument reveals slightly better decay rates of the kinetic energy and the anisotropic kinetic energy  in a time-average sense, that is,
\[
\frac{2}{t}\int_{t/2}^t \rVert u(s)\rVert_{L^2}^2ds \le C_k\frac{\epsilon^2}{t^{k+1}},\text{ and } \frac{2}{t}\int_{t/2}^{t}\rVert u_2(s)\rVert_{L^2}^2 ds\le C_k\frac{\epsilon^2}{t^{k+2}}, \text{ for all $t>0$, $k\ge 3$.}
\]
\end{remark}

\begin{remark}\label{remark2_regularity}
Theorem~\ref{main_IPM} only concerns  perturbations near a specific steady state $\rho_s(x_2)=1-x_2$. However, the author expects that our result can be generalized to perturbations near any sufficiently regular stratified density $\rho_s$ such that $\inf_{x\in \Omega}\partial_2\rho_s(x_2)>0$. Again the reason why we do not pursue such a more general statement is due to a lack of an exact statement in the literature about a local well-posedness theorem  near general  steady states. Instead, we will establish  asymptotic stability near  general stratified steady states for the Stokes transport system (see Theorem~\ref{main_Stokes}), which could  provide more evident ideas for the IPM equation as well. 
\end{remark}
\subsection{Stokes transport system}
 The Stokes transport system is another active scalar equation and it  shares several interesting properties with the IPM equation. We first recall the system:
\begin{align}\label{Stokes}
\text{(Stokes)} = \begin{cases}
\rho_t + u\cdot\nabla \rho = 0, &\text{for $(t,x)\in \mathbb{R}^+\times \Omega$}\\
\nabla \cdot u =0,\\
\Delta u= -\nabla  p -\colvec{0 \\ \rho}, \quad u=0 &\text{ on $\partial \Omega$}.
\end{cases}
\end{align}
As in the IPM, the incompressibility condition allows for a stream function formulation for the velocity field:
\begin{align}\label{Stokes_stream}
u=\nabla^\perp \Psi,\text{ where $\Psi$ solves }
\begin{cases}
\Delta^2 \Psi = \partial_1\rho & \text{ in $\Omega$,}\\
\Psi =\nabla\Psi =0 & \text{ on $\partial \Omega$.}
\end{cases}
\end{align}
 From the stream function formulation, one can easily notice that the velocity in the Stokes transport is much more regular than in the IPM equation. Indeed, such regular structure enables a global well-posedness theorem in a standard manner; if $\rho_0\in H^k(\Omega)$ for $k\ge 3$, then there exists a unique solution $\rho\in C([0,\infty), H^k(\Omega))$ (see, \cite[Theorem A.1]{dalibard2023long} or \cite[Theorem 1.1]{leblond2022well}).  We also mention that the regular of the velocity can yield quite robust structures for  weak solutions, for instance,  $L^3$ initial data $\rho_0$ yields a unique regular Lagrangian solution for three-dimensional model \cite[Theorem 2.2, Theorem 2.4]{inversi2023lagrangian}. 
 
   Various long time behaviors of the system \eqref{Stokes} have been investigated in \cite{dalibard2023long}, where the authors studied asymptotic stability and boundary layer formation for initial data near $\rho_s(x_2)=1-x_2$. We also mention that the authors in \cite{gancedo2022long,gancedo2024global} studied the interface problem, where $\rho$ is given as a characteristic function representing two different fluid densities, establishing global existence and asymptotic stability/instability of the interface depending on the Reyleigh-Taylor stability criterion.
 
 In regard to the Stokes transport system, our main result in this paper is a slight extension of the asymptotic stability obtained in \cite{dalibard2023long}, especially concerning the regularity assumption on the initial perturbation. This result will be established by adapting a similar strategy that we exploit for the IPM equation based on the energy structure. More precise statement is as follows:
\begin{theorem}\label{main_Stokes}
Let $\rho_s(x_2)$ be a stratified steady state such that
\begin{align}\label{steady_state_assumption1}
\gamma:=\inf_{x\in \Omega}\left(-\partial_2\rho_s(x_2)\right) >0,\quad \rVert \rho_s\rVert_{H^4(\Omega)} <\infty.
\end{align}
Then there exist $\epsilon=\epsilon(\gamma,\rVert \rho_s\rVert_{H^4(\Omega)})$ and $C=C(\gamma,\rVert \rho_s\rVert_{H^4})$ such that 
if $\rho_0-{\rho}_s\in H^2_0(\Omega)\cap H^4(\Omega)$ and $\rVert \rho_0-{\rho}_s\rVert_{H^4}\le \epsilon$, then the unique solution $\rho\in C([0,\infty); H^4(\Omega))$ to the Stokes transport system \eqref{Stokes} satisfies  
\[
\rVert \rho(t)-{\rho}_s\rVert_{H^4(\Omega)}\le C\epsilon,\text{ for all $t>0$}.
\] Furthermore, the potential energy decays as 
\[
E_P(\rho(t))-E_P(\rho_0^*)\le C\epsilon^2 t^{-2}.
\] Consequently, the solution $\rho(t)$ converges to its vertical rearrangement:
\[
\rVert \rho(t)-\rho_0^*\rVert_{L^2(\Omega)}\le C\frac{\epsilon}{t},\text{ for all $t>0$.}
\]
\end{theorem}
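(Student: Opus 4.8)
The plan is to treat the nonlinear problem directly through the potential energy deficit, as the author sketches for the IPM case. Write $\rho=\rho_s+\theta$ with $\theta_0:=\rho_0-\rho_s\in H^2_0(\Omega)\cap H^4(\Omega)$, and split $\theta=\langle\theta\rangle+\theta^{\neq}$, where $\langle\theta\rangle=\langle\theta\rangle(x_2)$ is the average of $\theta$ over the periodic variable $x_1$ and $\theta^{\neq}$ is the remainder. Two structural facts organize everything. First, since $\Delta^{2}\Psi=\partial_1\rho=\partial_1\theta^{\neq}$, the velocity is a smoothing linear functional of the fluctuation alone: elliptic regularity for the biharmonic operator with the clamped conditions $\Psi=\nabla\Psi=0$ gives $\|u\|_{H^{m+2}(\Omega)}\lesssim\|\theta^{\neq}\|_{H^{m}(\Omega)}$, and in particular $u_2=\partial_1\Psi$ vanishes on $\partial\Omega$. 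Second, $E_P(\rho)=\int_\Omega\rho\,x_2\,dx=\int_0^1\langle\rho\rangle\,x_2\,dx_2$ only sees the average $\langle\rho\rangle$. Because global existence in $H^4$ for \eqref{Stokes} is already available, it suffices to run a continuity argument on $[0,T]$ under the bootstrap hypothesis $\sup_{[0,T]}\|\theta(t)\|_{H^4}\le 2C_0\epsilon$ and to upgrade it. As a first step, since $\rho(t)$ is transported by a divergence-free field tangent to $\partial\Omega$, it stays equimeasurable with $\rho_0$, hence with $\rho_0^*$, so the rearrangement inequality from the introduction yields $\mathcal E(t):=E_P(\rho(t))-E_P(\rho_0^*)\ge0$, with $\mathcal E(0)\le C\epsilon^2$ (the deficit vanishes to second order in $\theta_0$); testing the Stokes law against $u$ and using the transport equation gives $\mathcal E'(t)=\int_\Omega u_2\rho\,dx=-\|\nabla u(t)\|_{L^2}^2=-\|\Delta\Psi(t)\|_{L^2}^2=:-D(t)\le0$.

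The heart of the argument is the second derivative. Differentiating once more, using $\Delta^2\Psi_t=\partial_1\rho_t=-\partial_1(u\cdot\nabla\rho)$, and integrating by parts twice (all boundary terms vanish because $\Psi=\nabla\Psi=0$, hence $u_2=\partial_1\Psi=0$, on $\partial\Omega$), one gets
\[
\mathcal E''(t)=-2\int_\Omega u_2\,(u\cdot\nabla\rho)\,dx=2\int_\Omega(-\partial_2\rho_s)\,u_2^2\,dx-2\int_\Omega(u\cdot\nabla u_2)\,\theta\,dx .
\]
The first term is $\ge2\gamma\|u_2\|_{L^2}^2$. In the error term, the $\langle\theta\rangle$-part reduces (after one integration by parts) to $\int_\Omega u_2^2\,\langle\theta\rangle'\,dx$, absorbed by $\|\langle\theta\rangle'\|_{L^\infty}\lesssim\|\theta\|_{H^4}\lesssim\epsilon$; the $\theta^{\neq}$-part is controlled by rewriting $u$ in terms of $\Psi$, integrating by parts, and interpolating the biharmonic regularity gain against the bootstrap bound, so that for $\epsilon$ small $\mathcal E''(t)\ge\tfrac{\gamma}{2}\|u_2(t)\|_{L^2}^2$. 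Next comes the degenerate coercivity identified by the author: Taylor expanding $E_P$ along the volume-preserving flow that links $\rho_0^*$ to $\rho(t)$ (whose second variation along admissible directions equals $\int_\Omega(-\partial_2\rho_0^*)\,w^2\,dx$ with $w$ the vertical displacement), and interpolating the resulting weak norm against $\|\rho(t)-\rho_0^*\|_{H^4}\lesssim\epsilon$, one shows $\mathcal E(t)\le C\epsilon\,\|u_2(t)\|_{L^2}$. Combining, $\mathcal E\ge0$, $\mathcal E'\le0$ and $\mathcal E''(t)\ge c\,\epsilon^{-2}\mathcal E(t)^2$; comparing with the exact solution $\phi(t)=6\epsilon^2/(ct^2)$ of $\phi''=c\epsilon^{-2}\phi^2$ (a maximum-principle argument on $\mathcal E-\phi$, fixing the base time from $\mathcal E(0)\le C\epsilon^2$, which also covers small $t$) yields $\mathcal E(t)\le C\epsilon^2 t^{-2}$ for all $t>0$.

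It remains to harvest the consequences and close the bootstrap. The $L^2$ convergence $\|\rho(t)-\rho_0^*\|_{L^2}\le C\epsilon\,t^{-1}$ follows from the companion estimate $\|\rho(t)-\rho_0^*\|_{L^2}^2\lesssim\mathcal E(t)$, again a quantitative rearrangement bound in which the uniform $H^4$ control is used to exclude loss in the weak topology; one also gets a time-averaged decay of $D=\|\nabla u\|_{L^2}^2$ from $\mathcal E'=-D$ and the monotonicity of $D$ implied by $\mathcal E''\ge0$. Finally, the uniform-in-time $H^4$ bound is obtained from a refinement of the global well-posedness energy estimate of \cite{dalibard2023long}: the linearized map $\theta\mapsto-u_2\partial_2\rho_s$ is dissipative at the $H^4$ level precisely because $-\partial_2\rho_s\ge\gamma>0$, and the commutator and nonlinear contributions — which would a priori only be bounded in time — become time-integrable once the decay of $\mathcal E$, hence of $\|\theta^{\neq}\|_{H^{4-}}$, is inserted; this gives $\|\theta(t)\|_{H^4}\le C_0\epsilon$ and closes the continuity argument.

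I expect the main obstacle to be the mismatch of scales. The dissipation $D=\|\Delta\Psi\|_{L^2}^2$ and the coercive quantity $\|u_2\|_{L^2}^2$ live in Sobolev classes much weaker than the energy deficit (the biharmonic smoothing costs several derivatives), so the error terms in $\mathcal E''$ and in the coercivity are not absorbable pointwise and must be handled by interpolation against the a priori $H^4$ bound; this is also where the exponent $t^{-2}$, independent of the regularity index, is pinned down, in contrast with the IPM case. Compounding this, $E_P$ and hence $\mathcal E$ feel only the horizontal average $\langle\rho\rangle$, which is not damped by $D$ directly — it relaxes to $\rho_0^*$ only through the quadratic term $\partial_t\langle\theta\rangle=-\partial_2\langle u_2\theta^{\neq}\rangle$ — so reconciling this with the coercivity $\mathcal E(t)\lesssim\epsilon\|u_2(t)\|_{L^2}$ requires control of the rearrangement deficit sharper than the crude $\mathcal E\le\tfrac12\|\rho-\rho_0^*\|_{L^1}$. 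Extending the $H^4$ a priori estimate from the affine profile to a general stratified $\rho_s\in H^4$ is a further, more technical, point.
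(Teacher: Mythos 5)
Your architecture is the same in spirit as the paper's (potential energy as Lyapunov functional, coercivity of its second derivative, rearrangement deficit comparable to $\|\rho-\rho_0^*\|_{L^2}^2$, bootstrap on the $H^4$ norm), but your route to the rate $t^{-2}$ is different and, as written, has a gap at its central step. You want a closed inequality $\mathcal{E}''\ge c\,\epsilon^{-2}\mathcal{E}^2$ obtained from $\mathcal{E}''\gtrsim\|u_2\|_{L^2}^2$ together with the coercivity $\mathcal{E}\le C\epsilon\|u_2\|_{L^2}$; the paper instead works at the level of the first derivative, $E'=-K=-\|\Delta\Psi\|_{L^2}^2$, proves $K\ge C\,\|\Delta^2\Psi\|_{H^2}^{-2}E^2$ (Proposition~\ref{propoos} plus Gagliardo--Nirenberg), and applies Lemma~\ref{ABC_ODE} with $a(t)=\|\Delta^2\Psi\|_{H^2}^2$ time-integrable under the bootstrap, using the second-derivative coercivity only to produce the time-averaged $t^{-4}$ decay of $\|u_2\|_{L^2}^2$ (Lemma~\ref{ODEBDC}, Corollary~\ref{Cor_Stokes}). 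Your claimed coercivity $\mathcal{E}\lesssim\epsilon\|u_2\|_{L^2}$ is in fact true, but the justification you give ("Taylor expand the second variation and interpolate the weak norm against the $H^4$ bound") does not deliver it: isotropic interpolation of $\|\Delta^2\Psi\|_{L^2}$ between $\|\nabla\Psi\|_{L^2}$ and $\|\Psi\|_{H^7}\lesssim\epsilon$ only gives $\mathcal{E}\lesssim\epsilon^{8/7}\|u_2\|_{L^2}^{6/7}$, and feeding that into $\mathcal{E}''\gtrsim\|u_2\|_{L^2}^2$ degrades the rate to roughly $t^{-3/2}$. To get the stated inequality one must use the anisotropic structure, e.g.
\begin{align*}
\|\partial_1\theta\|_{L^2}^2=\int_\Omega\partial_1\theta\,\Delta^2\Psi\,dx=-\int_\Omega\Delta^2\theta\,u_2\,dx\le\|\Delta^2\theta\|_{L^2}\|u_2\|_{L^2}\lesssim\epsilon\|u_2\|_{L^2},
\end{align*}
(all boundary terms vanish since $\Psi=\nabla\Psi=0$), combined with $\mathcal{E}\lesssim\|\rho-\rho_0^*\|_{L^2}^2\lesssim\|\partial_1\rho\|_{L^2}^2$ from Proposition~\ref{propoos} — which itself rests on the level-set construction of Lemma~\ref{rearrangement_lem}, not merely on a formal second variation. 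Granting these two inputs, your convexity/comparison argument for the ODE does give $\mathcal{E}\lesssim\epsilon^2t^{-2}$, so this part can be repaired (the sign of your error term in $\mathcal{E}''$ is off, but harmless).

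The second genuine gap is the closure of the uniform $H^4$ bound. Decay of $\mathcal{E}$ gives $\|\theta^{\neq}\|_{L^2}\lesssim\epsilon t^{-1}$, hence by interpolation $\|\theta^{\neq}\|_{H^s}\lesssim\epsilon\,t^{-(1-s/4)}$, and the norms that actually enter the commutator and nonlinear terms are then not time-integrable: for instance $\|u_2\|_{W^{2,\infty}}\lesssim\|\theta^{\neq}\|_{H^1}\lesssim\epsilon t^{-3/4}$, and even using the faster (time-averaged) decay of $\|u_2\|_{L^2}$ one only gets $\|u_2\|_{W^{2,\infty}}\lesssim\|u_2\|_{L^2}^{2/5}\|u_2\|_{H^5}^{3/5}\sim\epsilon^{3/5}t^{-4/5}$. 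So "the nonlinear contributions become time-integrable once the decay of $\mathcal{E}$, hence of $\|\theta^{\neq}\|_{H^{4-}}$, is inserted" fails as stated. What closes the estimate in the paper is the interplay of three ingredients: the dissipation $\|\partial_1\Delta\theta\|_{L^2}^2$ generated by the stable stratification in the $\|\Delta^2\theta\|_{L^2}$ energy identity (Proposition~\ref{Stokes_perturb2ed1}); Young's inequality to peel off the high norms against this dissipation, leaving only the low power $\|u_2\|_{L^2}^{4/7}$; and the conversion of the time-averaged $t^{-4}$ decay of $\|u_2\|_{L^2}^2$ (Proposition~\ref{main_Stokes_decay}, Corollary~\ref{Cor_Stokes}) into actual integrability via Lemma~\ref{fialmass}, together with a two-time bootstrap (the intermediate time $T^*_M=\log\log M$) to tame the constants. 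Without this absorption mechanism your continuity argument does not close, so this step needs to be supplied rather than cited as a refinement of the well-posedness estimate.
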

\begin{remark}
A similar asymptotic stability result was already provided in \cite[Theorem 1.1]{dalibard2023long}, where the authors assumed that the initial perturbation is  small in $H^{6}(\Omega)$ near $\rho_s(x_2)=1-x_2$. The authors also provided clear evidence that such a result can be obtained near more general stratified steady state which is sufficiently regular. As noted earlier, the proof presented in this paper is different in that our analysis does not use the linearized equation and it is an energy functional based method. This approach requires  a slightly weaker regularity assumption for the initial perturbation. However, as in the IPM equation, there is a threshold of the regularity for the stability. Indeed, \cite[Theorem 3.7.2]{leblond2023well} provides an example of long time instability for small initial perturbation in $H^{2-\epsilon}$ near any steady state. 
\end{remark}
\begin{remark} As  in Remark~\ref{kinetic_statemt}, our proof reveals a slight faster decay estimate for the velocity compared to that of the density. More precisely, we obtain
\[
\frac{2}{t}\int_{t/2}^t \rVert\nabla u(s)\rVert_{L^2}^2ds \le C\frac{\epsilon^2}{t^{3}},\text{ and } \frac{2}{t}\int_{t/2}^{t}\rVert u_2(s)\rVert_{L^2}^2 ds\le C\frac{\epsilon^2}{t^{4}}, \text{ for all $t>0$.}
\]
\end{remark}

\subsection{A sketch of the proof of Theorem~\ref{main_IPM}}\label{sketch}
Let us  describe the structure of the proof for the IPM equation.  A similar strategy will be adapted to prove the stability for the Stokes transport system. 

 We consider initial data $\rho_0$ such that $\rVert \rho_0-\overline{\rho}\rVert_{H^k}\le \epsilon$ for sufficiently small $\epsilon>0$ and we will assume $k \ge 3$. For convenience, we denote
\[
\theta(t):= \rho(t)-{\rho}_s,\quad 
\rho_0^*:=\text{ the vertical (decreasing) rearrangement of $\rho_0$,}
\]
and 
\[
E(t):=\int_{\Omega} (\rho(t)-\rho_0^*)(x)x_2 dx,\quad K(t):=\rVert \nabla \Psi(t)\rVert_{L^2}^2.
\]
Thanks to the weight, $x_2$, in the integral expression for $E$, it is evident that $\rho_0^*$ is the unique minimizer of $\rho\mapsto E_P(\rho)$ among all the functions which can be obtained by a pushforward of $\rho_0$ under a measure preserving map. Also if $\rho_0^*$ is a non-degenerate minimizer, then $\rho\mapsto E_P(\rho)$ is expected to satisfy a quadratic lower bound  in a suitable space. In this paper, we will look for such a lower bound in $L^2(\Omega)$ and establish in Proposition~\ref{propoos} that
\begin{align}\label{rk1psd1}
E(t)\sim \rVert \rho(t)- \rho_0^*\rVert_{L^2}^2.
\end{align}
Furthermore, as long as the solution $\rho(t)$ stays close to a stratified density in the space $H^3(\Omega)$, the fact that $H^3(\Omega)$ continuously embeds into $C^1(\Omega)$ suggests that each level set of $\rho(t)$ is also a graph of the horizontal variable $x_1$, from which one can infer that 
\begin{align}\label{rk1psd12}
\rVert \rho(t)-\rho_0^*\rVert_{L^2}\le C\rVert \partial_1\rho(t)\rVert_{L^2}.
\end{align}
On the other hand, the time derivative of $E(t)$ can be computed as
\begin{align}\label{rkosdsd22}
\frac{d}{dt}E(t)=\frac{d}{dt}\int \rho(t,x)x_2dx = \int u_2 \rho dx = \int \partial_1\Psi\rho dx = -\int \Psi \partial_1 \rho dx = \int \Psi \Delta \Psi dx = - K(t).
\end{align}
The Biot-Savart law in \eqref{IPM_stream} and the Gargliado-Nirenberg inequality tell us that 
\[
 \rVert \partial_1\rho\rVert_{L^2} =\rVert \Delta\Psi\rVert_{L^2}\le C_k \rVert \nabla\Psi\rVert_{H^k(\Omega)}^{1/k}\rVert \nabla \Psi\rVert_{L^2}^{(k-1)/k}.
\]
Combining this with \eqref{rk1psd1} and \eqref{rk1psd12}, we get
\[
K(t)=\rVert \nabla \Psi\rVert_{L^2}^2 \ge_C \rVert \partial_1\rho\rVert_{L^2}^{2k/(k-1)}\rVert \nabla \Psi\rVert_{H^k}^{-2/k}\ge_C E(t)^{k/(k-1)}\rVert \nabla\Psi\rVert_{H^k}^{-2/(k-1)}.
\]
Substituting this into \eqref{rkosdsd22}, we obtain
\begin{align}\label{rjksd1xc}
\frac{d}{dt}E(t) \le_C -E(t)^{k/(k-1)}\rVert \nabla\Psi(t)\rVert_{H^k}^{-2/(k-1)}.
\end{align}
This inequality is the main source of the asymptotic stability.  Let us use the following notation  which is slightly different from the usual convention: For $\alpha>0$ and $f:\mathbb{R}^+\mapsto \mathbb{R}^+$,
\begin{align}\label{rkjsdwdsdasx1}
f(t) = O(t^{-\alpha}),\text{ if \  $\frac{2}{t}\int_{t/2}^t f(s)ds\le C(1+t)^{-\alpha}$ for some $C>0$}.
\end{align}
Clearly $f(t)=O(t^{-\alpha})$ means that $f$ decays like $t^{-\alpha}$ in average. Let us make an ansatz:
\begin{align}\label{anzasd1}
\rVert \theta(t)\rVert_{H^k}\lesssim \epsilon,\quad \rVert \nabla\Psi(t)\rVert_{H^k} = O(t^{-\alpha}), \text{ for all $t>0$, for some $\alpha>0$} 
\end{align}
Under this ansatz, one can immediately deduce from the inequality \eqref{rjksd1xc} that 
\begin{align}\label{pot1}
E(t)= O({t^{-(k+2\alpha-1)}}).
\end{align}
With this energy decay rate, the energy variation in time \eqref{rkosdsd22} suggests that $K(t)$ decays faster than $E(t)$ by a factor of $t^{-1}$. Indeed, this elementary heuristic can be made rigorous by measuring the decay rates as an average (Lemma~\ref{ODEBDC}). Hence we can deduce 
\[
K(t) =  O(t^{-(k+2\alpha)}).
\]
  Having such a decay rate for $K(t)$, we will proceed to look at a higher derivative of the potential energy in time.  A key observation  is that the second derivative of the energy $E(t)$ also exhibits a coercive structure, namely, 
  \[
 \left(\frac{d}{dt}\right)^2 E(t) = -\frac{d}{dt}K(t) \ge C \rVert u_2(t)\rVert_{L^2}^2,
 \]
 which is the result of Proposition~\ref{main_IPM_decay}. We emphasize that such  coercive structure should not come as a surprise,  because the solution is expected  to converge to  a non-degenerate minimizer of the potential energy.  Again, our notation \eqref{rkjsdwdsdasx1} allows us to postulate that  $\rVert u_2\rVert_{L^2}^2$ will decay faster than $K(t)$ by a factor $t^{-1}$, that is,
 \begin{align}\label{rjjsd1ssxxsdsd}
 \rVert u_2(t)\rVert_{L^2}^2 = O(t^{-(k+2\alpha+1)}).
 \end{align}
So far, the decay rates of the energies have been derived under the ansatz \eqref{anzasd1}, therefore it must be justified in order to close the argument. To this end, in Proposition~\ref{energy_IPM_estimate}, we will derive the following estimate (an informal estimate is presented at this point for simplicity):
  \begin{align}\label{whatamidoingnow}
  \frac{d}{dt}\rVert \theta(t)\rVert_{H^k}^2 + \rVert\nabla \Psi(t)\rVert_{H^k}^{2}\le C \rVert u_2\rVert_{W^{1,\infty}}\rVert \theta(t)\rVert_{H^k}^2.
  \end{align}
 Using again the Gagliardo-Nirenberg interpolation inequality and Young's inequality,  we deduce 
 \[
 \rVert u_2\rVert_{W^{1,\infty}}\le C\rVert u_2\rVert_{L^2}^{1-2/k}\rVert u_2\rVert_{H^k}^{2/k}\le C\rVert u_2\rVert_{L^2}^{1-2/k}\rVert \nabla\Psi\rVert_{H^k}^{2/k}\le \eta \rVert \nabla\Psi\rVert_{H^k}^2 + C_\eta\rVert u_2\rVert_{L^2}^{(k-2)/(k-1)},
 \]
 for any $\eta\ll 1$. Noting that $\rVert \theta(t)\rVert_{H^k}\lesssim \epsilon\ll 1$ as long as the ansatz \eqref{anzasd1} is valid, we  substitute this  estimate  into \eqref{whatamidoingnow}, yielding that \begin{align}\label{differwhatdoyouwant}
  \frac{d}{dt}\rVert \theta(t)\rVert_{H^k}^2 + \rVert\nabla \Psi(t)\rVert_{H^k}^{2}\le C\rVert u_2\rVert_{L^2}^{(k-2)/({k-1})}\rVert \theta(t)\rVert_{H^k}^2= \epsilon^2O( t^{-(k+2\alpha+1)(k-2)/(2k-2)}),
 \end{align}
 where the last equality is due to \eqref{rjjsd1ssxxsdsd}. In this differential inequality, a sufficient condition for the ansatz \eqref{anzasd1} to persist is that  the right-hand side should decay  faster than $O(t^{-(1)})$, that is,
 \begin{align}\label{ksufficient}
 \frac{(k+2\alpha+1)(k-2)}{2k-2} > 1.
 \end{align}
 Indeed, if this condition is satisfied, integrating the both sides of \eqref{differwhatdoyouwant}  in time yields that
 \[
 \sup_{t>0}\rVert \theta(t)\rVert_{H^k}^2 + \int_0^{\infty}\rVert \nabla \Psi(t)\rVert_{H^k}^2dt \le C_{k,\alpha,\theta_0} \epsilon^2.
 \]
 In this case, $t\mapsto \rVert \nabla \Psi(t)\rVert_{H^k}^2$ is integrable in time, which indicates that  our ansatz \eqref{anzasd1} should hold at least for some $\alpha\ge 1/2$; 
 \[
 \frac{2}{t}\int_{t/2}^{t}\rVert \nabla \Psi(s)\rVert_{H^k}ds\le \left(\frac{2}{t}\int_{t/2}^t \rVert \nabla \Psi(s)\rVert_{H^k}^2ds\right)^{1/2}\le C_{k,\alpha,\theta_0}\epsilon\sqrt{\frac{2}{t}} \text{ for all $t>0$.}
 \]
  For  $\alpha\ge 1/2$, the minimum value of $k$ for the sufficient condition \eqref{ksufficient} to hold can be directly computed:
 \[
\frac{(k+2\alpha+1)(k-2)}{2k-2}>1 \Longleftarrow \frac{(k+2)(k-2)}{2k-2} > 1\Longleftarrow  k> 1+\sqrt{3}.
 \]
 The range of $k$ stated in Theorem~\ref{main_IPM} is strong enough to satisfy the sufficient condition for the above scheme. Especially, \eqref{pot1} with $\alpha\ge1/2$ directly gives the decay rate of the  potential energy stated in the Theorem~\ref{main_IPM}, resulting in the desired asymptotic stability.

 \subsection{Organization of the paper}
 In Section~\ref{preliminearty}, we collect useful tools concerning simple ODE problems and quantitative estimates for the potential energy. The stability analysis for the IPM equation and the Stokes transport system will be separately investigated in Section~\ref{IPMsection} and Section~\ref{Stokessection} respectively.

\subsection{Conventional notations}
 Following the conventional practice, we denote by $C$ an implicit positive constant that may vary from line to line. In the case where $C$ depends on a quantity,  say $A$, we will represent it  as $C_A$ or $C(A)$. For two quantities, $A$ and $B$, we will also use the notation $A \le_C B$, indicating that $A \le CB$ for some constant $C > 0$.
We denote 
\begin{align}\label{cc_comv}
C^\infty_0(\Omega):=\left\{ f\in C^\infty(\Omega): {\text{supp}(f)}\subset \Omega\right\},
\end{align}
where $\text{supp}(f)$ is the closed support of $f$.

 

\section{Preparation: Time-average decay and vertical rearrangement}\label{preliminearty}
\subsection{Time-average decay rates in differential inequalities}
In the proof of asymptotic stability, we will measure the decay rates of the energy quantities in a time-average manner. To prepare for this analysis, we will gather useful lemmas concerning simple differential inequalities. In what follows $[0,T]$ will denote an arbitrary time interval for some $T>0$.
\begin{lemma}\label{ABC_ODE}
Let $\alpha>0,\ 1<n$. Let  $a(t)$ and $f(t)$ be nonnegative functions on $[0,T]$ such that
\[
\frac{d}{dt}f(t) \le- a(t)^{-\alpha}f(t)^n,\quad f(0)=f_0.
\]
Then, $f$ satisfies
\[
f(t)\le_{\alpha,n} \frac{A^{\alpha/(n-1)}}{t^{(\alpha+1)/(n-1)}},\text{for all $t\in [0,T]$, where $A:=\int_0^t a(s)ds.$}
\]
\end{lemma}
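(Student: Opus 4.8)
The plan is to prove the differential inequality bound
\[
\frac{d}{dt}f(t)\le -a(t)^{-\alpha}f(t)^n
\]
by first replacing the integrand $a(s)^{-\alpha}$ by something I can control on the interval $[0,t]$, then integrating the resulting scalar ODE comparison. The natural obstacle is that $a(s)^{-\alpha}$ can be small exactly where $a(s)$ is large, so a pointwise lower bound on $a^{-\alpha}$ is useless; I need to exploit that only the \emph{integral} $A=\int_0^t a(s)\,ds$ is controlled. So the first step is a convexity/Jensen-type reduction: since $x\mapsto x^{-\alpha}$ is convex on $(0,\infty)$, I would like to say $\frac1t\int_0^t a(s)^{-\alpha}\,ds \ge \left(\frac1t\int_0^t a(s)\,ds\right)^{-\alpha} = (A/t)^{-\alpha}$. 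This is the key inequality and it is exactly Jensen's inequality for the convex function $x^{-\alpha}$ with the normalized Lebesgue measure on $[0,t]$.

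The second step is to turn this into a bound on $f(t)$. Write $g(t) := f(t)^{-(n-1)}$; since $f$ is nonnegative and (as long as $f>0$) differentiable with $f'\le 0$, we compute
\[
\frac{d}{dt}g(t) = -(n-1)f(t)^{-n}f'(t) \ge (n-1)\,a(t)^{-\alpha}.
\]
Integrating from $0$ to $t$ and discarding the nonnegative initial term $g(0)=f_0^{-(n-1)}$ gives
\[
g(t) \ge (n-1)\int_0^t a(s)^{-\alpha}\,ds \ge (n-1)\, t \left(\frac{A}{t}\right)^{-\alpha} = (n-1)\, t^{\alpha+1} A^{-\alpha},
\]
where the middle inequality is precisely the Jensen step above. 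Inverting, $f(t) = g(t)^{-1/(n-1)} \le (n-1)^{-1/(n-1)} A^{\alpha/(n-1)} t^{-(\alpha+1)/(n-1)}$, which is the claimed bound with an explicit constant depending only on $\alpha$ and $n$.

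A couple of technical points I would address to make this rigorous. First, the case $f(t_0)=0$ for some $t_0$: since $f\ge 0$ and $f'\le 0$ wherever the inequality holds, once $f$ hits zero it stays zero, and the asserted bound is trivially true there; so I may assume $f>0$ on the relevant interval and the substitution $g=f^{-(n-1)}$ is legitimate. Second, I should note that $A=\int_0^t a(s)\,ds$ genuinely depends on the endpoint $t$, so the bound is read as: for each fixed $t$, with $A=A(t)$ as defined. Third, Jensen requires $a(s)>0$ a.e.\ (otherwise $a^{-\alpha}=+\infty$ on a positive-measure set and both sides of the comparison are $+\infty$, so the conclusion still holds); and if $A(t)=0$ then $a\equiv 0$ on $[0,t]$, forcing $f'\le -\infty$ unless $f\equiv 0$, again consistent. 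I expect the Jensen step to be the only substantive idea; everything else is the standard ODE comparison $f' \le -c f^n \Rightarrow f \lesssim (ct)^{-1/(n-1)}$ adapted to a time-dependent coefficient whose average is the controlled quantity.
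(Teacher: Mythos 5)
Your proof is correct and follows essentially the same route as the paper: integrate the inequality for $\bigl(f^{-(n-1)}\bigr)'$ (equivalently, divide by $f^n$ and integrate), apply Jensen's inequality to the convex function $x\mapsto x^{-\alpha}$ to get $\int_0^t a(s)^{-\alpha}\,ds \ge t^{\alpha+1}A^{-\alpha}$, and invert. The extra remarks on degenerate cases ($f$ hitting zero, $a\equiv 0$) are fine but not needed beyond what the paper records.
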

\begin{proof}
Dividing the differential inequality by $f(t)^n$ and integrating it in time, we find that
\begin{align}\label{exact1}
\frac{1}{f^{n-1}(t)}-\frac{1}{f_0^{n-1}} \ge_{n} \int_0^t a(s)^{-\alpha}ds.
\end{align}
Since $\alpha>0$, Jensen's inequality yields that $
\int_0^t a(s)^{-\alpha}\frac{ds}{t}  \ge \left(\int_0^t a(s)ds\right)^{-\alpha}t^\alpha= A^{-\alpha}t^{\alpha},$
which implies
\[
\int_0^t a(s)^{-\alpha}ds \ge_n A^{-\alpha}t^{\alpha+1}.
\]
Plugging this into \eqref{exact1}, we get $
\frac{1}{f^{n-1}(t)}\ge_n A^{-\alpha}t^{\alpha+1}$, which immediately gives  the desired result.
\end{proof}

\begin{lemma}\label{ODEBDC}
Let $n>0$ and $f(t),g(t),h(t)$ be nonnegative functions on $[0,T]$  such that
\begin{align}\label{ODEsure}
\frac{d}{dt}f(t)\le-g(t),\ \frac{d}{dt}g(t)\le -h(t),\text{ and } f(t)\le \frac{C}{t^n},
\end{align}
for some $C>0$.
Then,  it holds that
\[
\frac{2}{t}\int_{t/2}^t g(s)ds\le_n \frac{C}{t^{n+1}},\text{ and } \frac{2}{t}\int_{t/2}^{t}h(s)ds\le_n \frac{C}{t^{n+2}},\text{ for all $t\in [0,T]$.}
\]
\end{lemma}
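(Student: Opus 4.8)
The plan is to exploit the fact that each of the two differential inequalities in \eqref{ODEsure} forces a monotonicity, and then to cascade the hypothesized average decay of $f$ downward: first to an average (hence, by monotonicity, pointwise) decay for $g$, and then to an average decay for $h$, losing one power of $t$ at each stage. Concretely, I would begin by noting that since $h\ge 0$, the inequality $g'\le -h\le 0$ shows that $g$ is nonincreasing on $[0,T]$, and likewise $f'\le -g\le 0$ shows $f$ is nonincreasing. Integrating the first inequality of \eqref{ODEsure} over $[t/2,t]$ and using $f\ge 0$ together with the assumed bound $f\le Ct^{-n}$ evaluated at the point $t/2$ gives
\[
\int_{t/2}^t g(s)\,ds \le f(t/2)-f(t)\le f(t/2)\le \frac{2^n C}{t^n}.
\]
Dividing by $t/2$ yields the first claimed estimate, $\tfrac{2}{t}\int_{t/2}^t g(s)\,ds\le 2^{n+1}C\,t^{-(n+1)}\le_n Ct^{-(n+1)}$.

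Next I would upgrade this average bound to a pointwise one: since $g$ is nonincreasing, $g(s)\ge g(t)$ for $s\in[t/2,t]$, so $g(t)\le \tfrac{2}{t}\int_{t/2}^t g(s)\,ds\le_n Ct^{-(n+1)}$ for every $t\in(0,T]$; in particular $g(t/2)\le_n Ct^{-(n+1)}$ (absorbing the resulting power of $2$ into the implicit $n$-dependent constant). Then I would repeat the integration step one level down, integrating $g'\le -h$ over $[t/2,t]$ and using $h\ge 0$ and $g\ge 0$:
\[
\int_{t/2}^t h(s)\,ds\le g(t/2)-g(t)\le g(t/2)\le_n C\,t^{-(n+1)},
\]
and dividing by $t/2$ gives $\tfrac{2}{t}\int_{t/2}^t h(s)\,ds\le_n C\,t^{-(n+2)}$, which is the second claimed estimate.

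The argument is entirely elementary, so there is no real obstacle; the one point worth flagging is the monotonicity observation, which is exactly what legitimizes evaluating $g$ at the single point $t/2$ after having controlled only its time average. Without it, the second estimate would not follow from the first, since a merely $L^1_{\mathrm{loc}}$-in-time average bound on $g$ carries no pointwise information. The nonnegativity hypotheses on $g$ and $h$ are used twice, both to obtain the monotonicity and to drop the endpoint terms $f(t)$ and $g(t)$; note that no lower bound on $n$ beyond $n>0$ is needed anywhere.
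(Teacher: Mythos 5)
Your proof is correct. The first estimate is obtained exactly as in the paper: integrate $f'\le -g$ over $[t/2,t]$, drop $f(t)\ge 0$, and use $f(t/2)\le C(t/2)^{-n}$. For the second estimate, however, you take a genuinely different (and somewhat cleaner) route. You observe that $g'\le -h\le 0$ makes $g$ nonincreasing, upgrade the time-averaged bound on $g$ to the pointwise bound $g(t)\le_n C t^{-(n+1)}$, and then integrate $g'\le -h$ once over $[t/2,t]$. The paper instead never extracts pointwise information on $g$: it integrates the relation $\int_s^t h(u)\,du\le g(s)$ a second time in $s$ over $[t/4,t]$, lower-bounding the left-hand side by $\tfrac{t}{4}\int_{t/2}^t h$ and controlling $\int_{t/4}^t g$ by two applications of the already-proved average bound (at scales $t$ and $t/2$). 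Your argument buys a stronger intermediate conclusion (pointwise decay of $g$) and avoids the double integration, at the price of leaning on the monotonicity of $g$, i.e.\ on the pointwise sign condition $h\ge 0$ through the differential inequality $g'\le -h$; the paper's averaging-in-$s$ trick only ever uses $h\ge 0$ to discard or lower-bound portions of integrals, so it would survive in settings where one only has the integrated inequality $\int_s^t h\le g(s)$ rather than a genuine differential inequality for $g$. Given the hypotheses as stated, both arguments are complete, and your bookkeeping of the $2^{n}$-type constants into the $\le_n$ notation is fine.
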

\begin{proof}
Let us choose $s,t\in[0,T]$ arbitrary so that $0\le s\le t\le T$. Integrating $f'(t)\le -g(t)$  over $[s,t]$ for $s\in (0,t)$, we get $f(t)-f(s)+\int_s^t g(u)du\le 0$. Hence, the upper bound of $f(t)$ tells us that 
\begin{align}\label{alo}
\int_s^t g(u)du \le f(s)\le \frac{C}{s^{n}},\text{ for  $0<s<t<T$.}
\end{align}
Plugging in $s=t/2$, we see that
\begin{align}\label{112se}
\frac{2}{t}\int_{t/2}^t g(u)du\le \frac{C}{t^{n+1}},
\end{align}
and this is the desired estimate for $g$. 

Similarly, we integrate $g'\le -h$ and observe that
\begin{align}\label{onlyrjskdsd1sd}
\int_{s}^t h(u)du \le g(s),\text{ for $0<s<t<T$}.
\end{align}
Integrating one more time in $s$ over $[t/4,t]$, we find that the left-hand side must satisfy
\[
\int_{t/4}^t \int_s^th(u)duds = \int_{t/4}^th(u)\int_{t/4}^udsdu =\int_{t/4}^t h(u)(u-t/4)du\ge \int_{t/2}^t h(u)(u-t/4)du\ge \frac{t}4\int_{t/2}^th(u)du.
\]
On the other hand, integrating the right-hand side in \eqref{onlyrjskdsd1sd} over $[t/4.t]$ yields 
\[
\int_{t/4}^t g(s)ds = \int_{t/4}^{t/2}g(s)ds + \int_{t/2}^t g(s)ds\le_n \frac{C}{(t/2)^{n}}+\frac{C}{t^{n}}\le_n \frac{C}{t^{n}},
\]
where we used \eqref{112se}. Putting them together, we obtain
\[
t\int_{t/2}^{t}h(u)du\le_n \frac{C}{t^{n}},
\]
Dividing the both sides by $t^2$, we derive the desired estimate for $h$, finishing the proof.
\end{proof}

 It is an elementary fact that if a bounded function $f$ exhibits  a decay rate $O(t^{-(1+\epsilon)})$, it is  integrable over all time, i.e., $\int_0^\infty f(t) dt <C_\epsilon<\infty$. In the next lemma, we demonstrate that if $f(t)$ decays like $O(t^{1+\epsilon})$ in a time-average sense, the same conclusion holds. 

\begin{lemma}\label{fialmass}
Let $T> 2$ and $n> 1$. Let $f(t)$ be a nonnegative function on $[0,T]$ such that
\[
\frac{2}{t}\int_{t/2}^{t} f(s)ds\le \frac{E}{t^n},\text{ for some $E>0$, for all $t\in [2,T]$}.
\]
Then, for $\alpha\in (1/n,1]$, we have 
\[
\int_1^T f(t)^{\alpha}ds \le C_{\alpha,n}E^{\alpha},
\]
where $C_{\alpha,n}>0$ does not depend on $T$.
\end{lemma}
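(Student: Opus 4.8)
The statement to prove is Lemma~\ref{fialmass}: given the time-averaged decay bound $\frac{2}{t}\int_{t/2}^t f(s)\,ds \le E t^{-n}$ for all $t\in[2,T]$ with $n>1$, and given $\alpha\in(1/n,1]$, we want $\int_1^T f(t)^\alpha\,dt \le C_{\alpha,n}E^\alpha$ uniformly in $T$.

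The natural approach is dyadic decomposition of the integration range $[1,T]$. I would write $[1,T]\subset \bigcup_{j\ge 0}[2^j, 2^{j+1}]$ (cutting off the last incomplete block at $T$), so that $\int_1^T f^\alpha\,dt \le \sum_{j\ge 0}\int_{2^j}^{2^{j+1}} f(t)^\alpha\,dt$. On each dyadic block $I_j=[2^j,2^{j+1}]$, the hypothesis applied at $t=2^{j+1}$ controls $\int_{I_j} f\,ds = \int_{2^j}^{2^{j+1}} f(s)\,ds \le \frac12\cdot 2^{j+1}\cdot E\,(2^{j+1})^{-n} = E\,2^{-n(j+1)}\cdot 2^j \cdot \tfrac12 \cdot 2 = E\, 2^j 2^{-n(j+1)}$, i.e. a bound of the form $\int_{I_j} f\,ds \le C E\, 2^{j(1-n)}$. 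Now I want to pass from the $L^1$ bound on $f$ over $I_j$ to an $L^\alpha$ bound with $\alpha\le 1$; this is exactly where Jensen's inequality (concavity of $x\mapsto x^\alpha$ for $\alpha\in(0,1]$, equivalently Hölder with exponents $1/\alpha$ and its conjugate) enters: $\int_{I_j} f^\alpha\,dt \le |I_j|^{1-\alpha}\left(\int_{I_j} f\,dt\right)^\alpha \le (2^j)^{1-\alpha}\cdot \big(CE\,2^{j(1-n)}\big)^\alpha = C^\alpha E^\alpha\, 2^{j(1-\alpha) + j\alpha(1-n)} = C^\alpha E^\alpha\, 2^{j(1 - \alpha n)}$.

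The key point is then the summability of the geometric series $\sum_{j\ge 0} 2^{j(1-\alpha n)}$: this converges precisely when $1-\alpha n < 0$, i.e. $\alpha > 1/n$, which is exactly the hypothesis on $\alpha$. Summing gives $\int_1^T f^\alpha\,dt \le C^\alpha E^\alpha \sum_{j\ge 0} 2^{j(1-\alpha n)} = C_{\alpha,n} E^\alpha$, with the constant depending only on $\alpha,n$ (through $C$ and the value of the geometric series) and not on $T$. The endpoint case $\alpha=1$ forces $n>1$ for convergence, consistent with the stated range. I would also handle the truncation at $T$ by simply noting that the last block contributes no more than the full block $[2^{j_0},2^{j_0+1}]$ where $2^{j_0}\le T < 2^{j_0+1}$, so the same bound applies term-by-term.

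I do not anticipate a serious obstacle here; the lemma is a clean dyadic-pigeonhole argument and the only thing requiring a little care is getting the exponent bookkeeping right — verifying that the $L^1$-to-$L^\alpha$ conversion produces exactly the exponent $j(1-\alpha n)$ so that the condition $\alpha>1/n$ is both necessary and sufficient for summability. A minor point worth stating explicitly is that the hypothesis is assumed for $t\in[2,T]$, so the lowest relevant dyadic scale is $2^1=2$; the block $[1,2]$ can be absorbed by observing that $\int_1^2 f\,dt \le \int_1^2 f\,dt$ is already controlled by applying the hypothesis at $t=2$ (which covers $[1,2]$ exactly), so no boundary issue arises at the left endpoint either. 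I would present the proof in three short moves: (i) dyadic splitting, (ii) per-block Jensen/Hölder estimate using the hypothesis, (iii) geometric summation using $\alpha n>1$.
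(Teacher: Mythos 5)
Your proof is correct and is essentially the paper's own argument: a dyadic decomposition of $[1,T]$, a per-block Jensen/H\"older estimate converting the $L^1$ bound from the hypothesis into an $L^\alpha$ bound, and a geometric summation that converges precisely because $\alpha n>1$; the paper merely anchors its dyadic blocks at $T$ (setting $T_i=2^{-i}T$) rather than at powers of $2$ growing from $1$. The one small care point is your truncated last block $[2^{j_0},T]$: you cannot invoke the hypothesis at $t=2^{j_0+1}>T$ (nor extend the integral past $T$, where $f$ is not defined), but since $T<2^{j_0+1}$ gives $[2^{j_0},T]\subset[T/2,T]$ you may simply apply the hypothesis at $t=T$, which is exactly the issue the paper's $T$-anchored decomposition avoids automatically.
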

\begin{proof}
We pick $N\in\mathbb{N}$ such that
\begin{align}\label{Cucuvick}
\frac{T}{2^{N+1}}\le 1\le \frac{T}{2^{N}}\le 2,
\end{align}
and define $T_{i}:={2^{-i}}T$, for $i=0,\ldots, N$. We decompose
\[
\int_1^T f(t)^{\alpha}ds  = \int_{1}^{T_N}f(t)^\alpha dt + \sum_{i=1}^{N}\int_{T_i}^{T_{i-1}} f(t)^\alpha dt.
\]
Since $T_N\le 2$ and $\alpha\le 1$, applying Jensen's inequality, we obtain 
\begin{align}\label{apj}
 \int_{1}^{T_N}f(t)^\alpha dt\le \int_1^2 f(t)^\alpha dt \le \left(\int_1^2 f(t)dt \right)^\alpha\le E^\alpha.
 \end{align}
For $1\le i\le N$,  again Jensen's inequality gives us that
\[
\int_{T_i}^{T_{i-1}} f(t)^\alpha dt \le \left(\frac{1}{T_{i-1}-T_{i}}\int_{T_i}^{T_{i-1}}f(t)dt \right)^\alpha |T_{i-1}-T_i| = \left(\frac{1}{T_i}\int_{T_i}^{T_{i-1}}f(t)dt\right)^{\alpha}T_i\le_{\alpha} E^\alpha T_i^{1-\alpha n},
\]
where the last inequality follows from the decay hypothesis for $f$. Summing over $i=1,\ldots,N$, we get
\[
\sum_{i=1}^N\int_{T_i}^{T_{i-1}} f(t)^\alpha dt\le_\alpha E^\alpha T^{1-\alpha n }\sum_{i=1}^{N}\left({2^{\alpha n-1}}\right)^{i}\le_{\alpha,n} E^\alpha\left(\frac{T}{2^N}\right)^{1-\alpha n}\le  C_{\alpha,n}E^\alpha,
\]
where the last inequality follows from \eqref{Cucuvick}. Combining this with \eqref{apj}, we finish the proof.\end{proof}

\subsection{Vertical rearrangement}
Given a Borel measurable function $f$ on $\Omega$, we define its vertical (decreasing) rearrangement as
  \begin{align}\label{vertical_rearrangement}
  f^*(x_2):=\int_0^\infty 1_{\left\{0\le x_2 \le |\left\{ f > s\right\}|\right\}}ds.
  \end{align}
  By its definition, it is clear that $x_2\mapsto f^*(x_2)$ is monotone decreasing.  In the rest of the section, we consider a stratified density $\rho_s(x)=\rho_s(x_2)$, a function $f$ that is close to $\rho_s$ in a Sobolev space and its vertical rearrangement. 
  
Before presenting the lemmas, let us collect some basic properties for $\rho_s$.  We will always assume that 
\begin{align}\label{strati_sd2}
\gamma:=\inf_{\Omega}(-\partial_2\rho(x_2))>0,\quad \rVert \rho_s\rVert_{H^4(\Omega)} < \infty.
\end{align}
By the monotonicity of $\rho_s$, we can describe the image of $\rho_s$ as
\begin{align}\label{interval_2}
I:=\rho_s(\Omega)= [\rho_s(1),\rho_s(0)].
\end{align}
The inverse function theorem, combined with the assumption that $\gamma>0$, guarantees the existence of the inverse of $\rho_s$, that is,  $\phi_0:=\rho_s^{-1}:I\mapsto [0,1]$ is well-defined. Moreover, since $\rho_s$ depends on the single variable $x_2$, the regularity assumption in \eqref{strati_sd2}, combined with the usual Sobolev embedding theorem, ensures that $\rho_s\in C^3(\Omega)$, and $\rVert \rho_s\rVert_{C^3}\le_C \rVert \rho_s\rVert_{H^4}.$ With such information, one can straightforwardly deduce the following estimates:
\begin{align}\label{rkskd2sd}
\rVert\partial_s\phi_0\rVert_{L^\infty}+\rVert \partial_{ss}\phi_0\rVert_{L^\infty} + \rVert \partial_2\rho_s\rVert_{L^\infty}+\rVert \partial_{22}\rho_s\rVert_{L^\infty}+\rVert \partial_{222}\rho_s\rVert_{L^{\infty}}\le C(\gamma,\rVert \rho_s\rVert_{H^4}).
   \end{align}
 Noting that $H^3(\Omega)$ continuously embeds into $C^1(\Omega)$, one can infer that if a function $f$ is sufficiently close to $\rho_s$ in $H^3$, similar properties of the level sets and the inverse function of $f$ can be quantitatively estimated. This is will be the main implication of the next lemma. In the rest of this section, the implicit constant $C$, that appears in the proofs, may depend on $\gamma$ and $\rVert \rho_s\rVert_{H^4}$ but we will omit its dependence in the notation for simplicity.
   
  \begin{lemma}\label{rearrangement_lem}
  Suppose $\rho_s$ satisfies \eqref{strati_sd2}. There exists $\delta=\delta(\gamma,\rVert \rho_s\rVert_{H^4})>0$, such that if 
  \[
  f={\rho}_s\text{ on $\partial\Omega$ and } \rVert f-{\rho}_s\rVert_{H^3(\Omega)}\le \delta,
  \] then there exist $\phi_1:I\mapsto [0,1]$ and $h:\mathbb{T}\times I\to [0,1]$ such that 
 \begin{align*}
\int_{\mathbb{T}}h(x_1,s)dx_1=0\text{ and } f(x_1,\phi_1(s)+h(x_1,s))= s = f^*(\phi_1(s)),\text{ for $(x_1,s)\in \mathbb{T}\times I$.}
 \end{align*}
 Furthermore, the following estimates hold:
 \begin{align}\label{regularity_level}
\rVert\partial_{ss}(\phi_1-\phi_0)\rVert_{L^\infty}+ \rVert\partial_s(\phi_1-\phi_0)\rVert_{L^\infty} + \rVert h\rVert_{L^\infty}+\rVert \partial_sh\rVert_{L^\infty}\le C \rVert f-{\rho}_s\rVert_{H^3}. 
 \end{align}
 where $C>0$ is a constant which depends on $\rVert \rho_s\rVert_{H^4}$ and $\gamma$.
  \end{lemma}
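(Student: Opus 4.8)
The plan is to realize each level set of $f$ as a graph $\{x_2=g(x_1,s)\}$ over the horizontal torus, to take $\phi_1(s)$ to be the average height of this graph and $h$ its fluctuation, and then to read off both the rearrangement identity and the bounds \eqref{regularity_level} from the implicit function theorem together with the embedding $H^3(\Omega)\hookrightarrow C^1(\Omega)$. Concretely, I would first fix $\delta$ so small that $\rVert f-\rho_s\rVert_{C^1}\le C\rVert f-\rho_s\rVert_{H^3}\le\gamma/2$; then $\partial_2 f\le-\gamma/2<0$ on $\overline{\Omega}$ by \eqref{strati_sd2}, and since $f=\rho_s$ on $\partial\Omega$, for every $x_1\in\mathbb{T}$ the map $x_2\mapsto f(x_1,x_2)$ is a decreasing $C^1$-bijection of $[0,1]$ onto $I$ (recall \eqref{interval_2}). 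Hence there is a unique $g(x_1,s)\in[0,1]$ with $f(x_1,g(x_1,s))=s$, the implicit function theorem makes $g$ of class $C^1$ with $\partial_2 f(x_1,g)\,\partial_s g=1$, and I set $\phi_1(s):=\int_{\mathbb{T}}g(x_1,s)\,dx_1$ and $h:=g-\phi_1$, so that $\int_{\mathbb{T}}h(\cdot,s)\,dx_1=0$ and $f(x_1,\phi_1(s)+h(x_1,s))=s$ hold by construction.

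For the rearrangement identity: since $f(x_1,\cdot)$ is decreasing, the vertical slice $\{x_2:f(x_1,x_2)>s\}$ equals $[0,g(x_1,s))$, so $|\{f>s\}|=\int_{\mathbb{T}}g(x_1,s)\,dx_1=\phi_1(s)$ by Fubini. Thus $s\mapsto|\{f>s\}|$ equals $1$ for $s<\inf I$, the continuous strictly decreasing bijection $\phi_1\colon I\to[0,1]$ on $I$, and $0$ for $s>\sup I$; inserting this into the definition \eqref{vertical_rearrangement} of $f^*$ and computing that $\{s\ge0:|\{f>s\}|\ge x_2\}=[0,\phi_1^{-1}(x_2)]$ for $x_2\in(0,1)$ gives $f^*=\phi_1^{-1}$ on $(0,1)$, hence $f^*(\phi_1(s))=s$ on $I$ by continuity at the endpoints. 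Combined with the previous paragraph, this is exactly the displayed relation.

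The first-order bounds of \eqref{regularity_level} come from the identity $\rho_s(g)-\rho_s(\phi_0)=-(f-\rho_s)(x_1,g)$ (subtract $\rho_s(\phi_0(s))=s=f(x_1,g(x_1,s))$): since $|\partial_2\rho_s|\ge\gamma$ this gives $\rVert g-\phi_0\rVert_{L^\infty}\le\gamma^{-1}\rVert f-\rho_s\rVert_{L^\infty}\le C\rVert f-\rho_s\rVert_{H^3}$, whence also $\rVert h\rVert_{L^\infty},\rVert\phi_1-\phi_0\rVert_{L^\infty}\le C\rVert f-\rho_s\rVert_{H^3}$. Writing
\[
\partial_s g-\partial_s\phi_0=\Big(\frac{1}{\partial_2 f(x_1,g)}-\frac{1}{\partial_2\rho_s(g)}\Big)+\Big(\frac{1}{\partial_2\rho_s(g)}-\frac{1}{\partial_2\rho_s(\phi_0)}\Big)
\]
and bounding the first bracket by $C\rVert\partial_2(f-\rho_s)\rVert_{L^\infty}\le C\rVert f-\rho_s\rVert_{H^3}$ and the second by $C\rVert\partial_{22}\rho_s\rVert_{L^\infty}\rVert g-\phi_0\rVert_{L^\infty}$ (using \eqref{rkskd2sd} and $|\partial_2 f|,|\partial_2\rho_s|\ge\gamma/2$) controls $\partial_s(g-\phi_0)$ in $L^\infty$, hence $\partial_s(\phi_1-\phi_0)$ and $\partial_s h$.

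The essential point is the bound on $\partial_{ss}(\phi_1-\phi_0)$. Formally $\partial_{ss}\phi_1(s)=-\int_{\mathbb{T}}\partial_{22}f(x_1,g)\,(\partial_2 f(x_1,g))^{-3}\,dx_1$ and $\partial_{ss}\phi_0=-\partial_{22}\rho_s(\phi_0)(\partial_2\rho_s(\phi_0))^{-3}$; subtracting, the $\partial_{22}\rho_s(g)-\partial_{22}\rho_s(\phi_0)$ contribution and the $(\partial_2 f(x_1,g))^{-3}-(\partial_2\rho_s(\phi_0))^{-3}$ contribution are bounded just as in the previous paragraph via \eqref{rkskd2sd}. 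The remaining term is $\int_{\mathbb{T}}\partial_{22}(f-\rho_s)(x_1,g(x_1,s))(\partial_2 f)^{-3}\,dx_1$, and here the obstacle is that $f\in H^3(\Omega)$ does \emph{not} make $\partial_{22}f$ continuous on $\Omega$. I would estimate it using $|\partial_2 f|\ge\gamma/2$ and then, for a.e.\ $x_1$, the one-dimensional inequality $\sup_{x_2\in[0,1]}|w|\le\int_0^1|w|+\int_0^1|w'|$ applied to $w=\partial_{22}(f-\rho_s)(x_1,\cdot)$ (an $H^1(0,1)$ slice of the $H^3$ function $f-\rho_s$), which yields the bound $C\big(\rVert\partial_{22}(f-\rho_s)\rVert_{L^1(\Omega)}+\rVert\partial_{222}(f-\rho_s)\rVert_{L^1(\Omega)}\big)\le C\rVert f-\rho_s\rVert_{H^3}$ since $|\Omega|<\infty$. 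Summing the contributions gives \eqref{regularity_level}. The one genuinely delicate step — which I expect to be the main difficulty to write carefully — is justifying that $\partial_s\phi_1(s)=\int_{\mathbb{T}}(\partial_2 f(x_1,g(x_1,s)))^{-1}\,dx_1$ is truly Lipschitz in $s$ rather than merely formally differentiable; I would obtain this by running the above estimates on its difference quotients, using that $\partial_2 f\in H^2(\Omega)\hookrightarrow C^0(\overline{\Omega})$ while a.e.\ vertical slice $\partial_2 f(x_1,\cdot)\in H^2(0,1)\hookrightarrow C^1([0,1])$, so that differentiating the composition with $g(x_1,\cdot)\in C^1$ is licit slicewise, followed by dominated convergence in the $x_1$-average.
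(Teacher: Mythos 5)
Your proposal is correct and is essentially the paper's own argument: you parametrize the level sets by the implicit function theorem using $\partial_2 f\le-\gamma/2$, take $\phi_1$ to be the $x_1$-average of the graph and $h$ its zero-mean fluctuation, identify $f^*\circ\phi_1=\mathrm{id}$ via equality of super-level-set measures, and handle the delicate $\partial_{22}(f-\rho_s)$ contribution to $\partial_{ss}(\phi_1-\phi_0)$ by the one-dimensional $W^{1,1}\hookrightarrow L^\infty$ embedding on slices followed by integration in $x_1$ (the paper applies this embedding in the $s$-variable and changes variables back to $x_2$, while you apply it directly on vertical slices and differentiate the inverse-function formulas instead of the paper's integral identity --- cosmetic differences), correctly exploiting that only the averaged quantity $\partial_{ss}(\phi_1-\phi_0)$, not $\partial_{ss}h$, requires an $L^\infty$ bound. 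The only thing to fix is the normalization: with $|\mathbb{T}|=2\pi$ you should set $\phi_1(s)=\frac{1}{2\pi}\int_{\mathbb{T}}g(x_1,s)\,dx_1$ so that $h=g-\phi_1$ genuinely has zero average and $|\{f>s\}|=2\pi\phi_1(s)$.
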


\begin{figure}
\hspace{0.3cm}\includegraphics[scale=0.8]{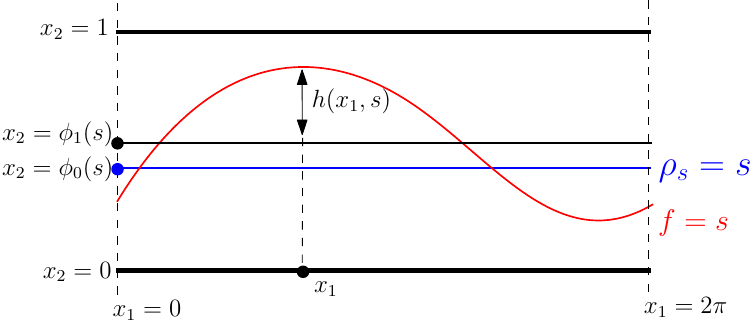}
\caption{Illustration of the level sets of $\rho_s$ and $f$. For each $s\in I$, $\phi_1$ is uniquely determined so that the deviation $x_1\mapsto h(x_1,s)$ has zero average in $x_1$.}
\label{fig1}
\end{figure}
   \begin{proof} An illustration for the proof of the lemma is presented in Figure~\ref{fig1}.  
   
    We notice that the image of $f$ is exactly equal to the interval $I$, which is the image of ${\rho}_s$. Indeed, we have
   \begin{align}\label{monotone1}
  \partial_2 f (x_1,x_2)&= \partial_2{\rho}_s(x_2) + (\partial_2 f(x_2) - \partial_2 {\rho}_s(x_2))\le  -\gamma + C\rVert f-{\rho}_s\rVert_{H^3}\le  -\gamma+C\delta < 0,
  \end{align}
  for sufficiently small $\delta>0$.
   Hence,  $x_2\mapsto f(x_1,x_2)$ is  strictly decreasing. Since $f={\rho}_s$ on $\partial\Omega$, it follows that $f(\overline{\Omega})=I$.
    
  Thanks to the monotonicity, the implicit function theorem tells us that there is a parametrization of the level curves, $\left\{f=s\right\}$, which we will denote by $\phi(\cdot,s)$, that is,
  \begin{align}\label{level_set_ff}
  f(x_1,\phi(x_1,s))=s\text{ for each $(x_1,s)\in \mathbb{T}\times I$.}
  \end{align} Let us rewrite it as
  \begin{align}\label{phi_decomposition}
  \phi(x_1,s) = \phi_0(s) + g(x_1,s), \text{ where } g(x_1,s):=\phi(x_1,s) -\phi_0(s).
   \end{align}
Now, we will aim to derive necessary estimates for  $g$. Writing
      \begin{align*}
     0  = \rho_s(\phi_0(s))-f(x_1,\phi(s)) = \rho_s(\phi_0(s)) - \rho_s(\phi(x_1,s))+ \left(\rho_s(\phi(x_1,s)) -f(x_1,\phi(x_1,s))
     \right),
     \end{align*}
     we notice that
     \begin{align}\label{betterex}
    \int_{0}^{g(x_1,s)}\partial_2\rho_s(y+\phi_0(s))dy=  \rho_s(\phi(x_1,s)) -f(x_1,\phi(x_1,s)).
     \end{align}   
   Since $\partial_2\rho_s<-\gamma<0$ and $ \rVert f-\rho_s\rVert_{L^\infty}\le_C \rVert f-\rho_s\rVert_{H^3}$, we find that 
\begin{align}\label{gsmall}
\rVert g\rVert_{L^\infty}\le_C \rVert f-\rho_s\rVert_{H^3}.
 \end{align}
Differentiating \eqref{betterex} with respect to $s$, we obtain
\begin{align}\label{tlqkfeljsd12sd}
\partial_s g(x_1,s)\partial_2\rho_s (\phi(x_1,s)) +  \int_{0}^{g(x_1,s)}\partial_{22}\rho_s(y+\phi_0(s))\partial_s\phi_0(s)dy = (\partial_2\rho_s - \partial_2f)(x_1,\phi(x_1,s))\partial_s\phi(x_1,s).
\end{align}
Again, using $\partial_2\rho_s<-\gamma<0$, we have $|\partial_s g\partial_2\rho_s |\ge C | \partial_sg|$. On the other hand, using \eqref{rkskd2sd} and \eqref{gsmall}, we can estimate  the integral on the left-hand side as
\begin{align*}
\left| \int_{0}^{g(x_1,s)}\partial_{22}\rho_s(y+\phi_0(s))\partial_s\phi_0(s)dy\right|\le_C \rVert f-\rho_s\rVert_{H^3}.
\end{align*} For the right-hand side, at each point $(x_1,s)\in\mathbb{T}\times I$,  we have
\[
|(\partial_2\rho_s - \partial_2f)\partial_s\phi|\le_C \rVert \rho_s-f\rVert_{H^3}|\partial_s\phi| \le_C \rVert \rho_s-f\rVert_{H^3} |\partial_s\phi_0| + \delta |\partial_s g|\le_C \rVert \rho_s-f\rVert_{H^3} + \delta |\partial_s g|.
\]
Assuming $\delta$ is sufficiently small depending on $\gamma$, these estimates and \eqref{tlqkfeljsd12sd} give us 
\begin{align}\label{rjjsdwdqdsd}
 \rVert\partial_s g\rVert_{L^\infty}\le_C \rVert f-\rho_s\rVert_{H^3}.
\end{align}
Next let us estimate $\partial_{ss}g$.
Once again, differentiating \eqref{tlqkfeljsd12sd} in $s$ and using the chain rule, we get (at each point $(x_1,s)$)
\begin{align}\label{rjjksdsd1sd}
\partial_{ss}g \partial_2 \rho_s(\phi)&=A_1+A_2+A_3+A_4+A_5+A_6,
\end{align}
where
\begin{align*}
A_1&:=-\partial_s g \partial_{22}\rho_s(\phi)\partial_s\phi,\\
 A_2&:=-\partial_sg\partial_{22}\rho_s(\phi)\partial_s\phi_0\\
 A_3&:=-\int_{0}^{g}\partial_{222}\rho_s(y+\phi_0)(\partial_s\phi_0)^2dy \\
 A_4&:=-\int_0^{g}\partial_{22}\rho_s(y+\phi_0)\partial_{ss}\phi_0(s)dy,\\
 A_5&:=\partial_{22}(\rho_s-f)(\phi)(\partial_s\phi)^2\\
 A_6&:=\partial_{2}(\rho_s-f)(\phi) \partial_{ss}\phi.
\end{align*}
Again, $\partial_2\rho_s\le -\gamma<0$ gives us that the left-hand side of \eqref{rjjksdsd1sd} can be estimated as
 \[
 |\partial_{ss}g\partial_2\rho_s(\phi)|\ge C |\partial_{ss}g|.
 \] Using \eqref{rkskd2sd}, \eqref{gsmall} and \eqref{rjjsdwdqdsd}, it is straightforward that
\begin{align*}
|A_1|, |A_2|, |A_3|,   |A_4| \le_C \rVert f-\rho_s\rVert_{H^3},\quad |A_5|\le_C |\partial_{22}(\rho_s-f)|,\quad  |A_6|\le_C \rVert f-\rho_s\rVert_{H^3} + \delta| \partial_{ss}g|.
\end{align*}
Plugging them into \eqref{rjjksdsd1sd}, we obtain a pointwise estimate:
\begin{align}\label{rhhsjdasd1sd}
|\partial_{ss}g(x_1,s)|& \le_C \rVert f-\rho_s\rVert_{H^3} +  |\partial_{22}\rho_s(\phi(x_1,s))-\partial_{22}f(x_1,\phi(x_1,s))|.
\end{align}
Then for each fixed $x_1$, we treat the functions above as a function of $s$.  Applying the Sobolev embedding $W^{1,1}(I)\mapsto L^\infty(I)$, we have that for each $x_1\in\mathbb{T}$,
    \begin{align}\label{sympat}
   \sup_{s\in I} &|\partial_{22}\rho_s(\phi(s)) - \partial_{22}f(\phi(s))|\le_C \int_{I} |\partial_s(\partial_{22}(\rho_s-f)(\phi(s)))|ds + \int_{I}|\partial_{22}(\rho_s-f)(\phi(s))|ds\nonumber\\
   &\le_C \int_{I}|\partial_{222}(\rho_s-f)(\phi(s))\partial_s\phi(s)|ds + \int_{I}|\partial_{22}(\rho_s-f)(\phi(s))|ds\nonumber\\
   &\le_C\int_{0}^{1}|\partial_{222}(\rho_s-f)(x_1,x_2)|dx_2 + \int_{0}^{1}|\partial_{22}(\rho_s- f)(x_1,x_2)|dx_2\left(\sup_{s}|\partial_s\phi(s)|^{-1}\right),
    \end{align}
    where we used the change of variables $\phi(s)\to x_2$ in the last inequality. Note that
    \[
    |\partial_s\phi(s)| \ge |\partial_s\phi_0(s)|  - |\partial_s g(s)|\ge \gamma - \delta >C,
    \]
   where the second  inequality  is due to \eqref{rjjsdwdqdsd}. Hence, integrating \eqref{sympat} in $x_1$ over $\mathbb{T}$, we get
   \[
   \int_{\mathbb{T}} \sup_{s\in I} |\partial_{22}\rho_s(\phi(s)) - \partial_{22}f(\phi(s))|dx_1\le_C \rVert \rho_s-f\rVert_{W^{3,1}(\Omega)} + \rVert \rho_s-f\rVert_{W^{2,1}(\Omega)} \le_C \rVert \rho_s-f\rVert_{H^3(\Omega)}.
   \]
Combining this with \eqref{rhhsjdasd1sd}, we arrive at
    \begin{align}\label{gdidnot}
    \sup_{s\in I}\int_{\mathbb{T}}|\partial_{ss}g(x,s)|dx \le \rVert f-\overline{\rho}\rVert_{H^3(\Omega)}.
    \end{align}
Towards the proof,  we define $\phi_1$ and $h$ as follows:
  \[
  \phi_1 := \phi_0(s) + \frac{1}{2\pi}\int_{\mathbb{T}}g(x,s)dx,\quad h(x,s):=g(x,s)-\int_{\mathbb{T}}g(x,s)dx.
  \]
  Let us check if $\phi_1$ and $h$ satisfy the desired properties.   By its definition, we have  $\int_{\mathbb{T}}h(x,s)dx=0$ for each $s\in I$.  Also, \eqref{level_set_ff} and \eqref{phi_decomposition} tells us that
 \[
 f(x,\phi_1(s)+h(x,s)) = f(x,\phi_0(s)+g(x,s)) =f (x,\phi(x,s)) = s.
 \]
Collecting the estimates for $g$ obtained in \eqref{gsmall}, \eqref{rjjsdwdqdsd} and \eqref{gdidnot},  we see that \begin{align}\label{phi_1estm}
 \rVert h\rVert_{L^\infty}, \rVert \partial_s h\rVert_{L^{\infty}(\mathbb{T}\times I)},\  \rVert \partial_s(\phi_1-\phi_0)\rVert_{L^{\infty}(\mathbb{T}\times I)}, \  \rVert \partial_{ss}(\phi_1-\phi_0)\rVert_{L^{\infty}(\mathbb{T}\times I)}\le C\rVert f-\overline{\rho}\rVert_{H^3(\Omega)},
 \end{align}
 which proves \eqref{regularity_level}.
 To finish the proof, we have to prove  $f^*(\phi_1(s)) = s.$
To this end, observe that for each $s\in I$, it holds that
\[
\left|\left\{ (x_1,x_2)\in \Omega: \phi_1^{-1}> s\right\}\right| = \int_{\mathbb{T}}\int_{0}^11_{\left\{\phi_1^{-1}(x_2)\ge s\right\}}dx = \int_{\mathbb{T}}\int_0^{\phi_1(s)}dx = 2\pi\phi_1(s).
\]
We also have
\begin{align*}
\left|\left\{ (x_1,x_2)\in \Omega: {f}> s\right\}\right| &= \int_{\mathbb{T}}\int_{0}^1 1_{\left\{f(x_1,x_2)>s\right\}}dx =\int_{\mathbb{T}}\int_0^11_{\left\{x_2\le \phi(x_1,s)\right\}}dx=\int_\mathbb{T}\phi(x_1,s)dx_1 \\
&= 2\pi\left(\phi_0(s)+\frac{1}{2\pi}\int_{\mathbb{T}}g(x_1,s)dx_1\right)\\
& = 2\pi \phi_1(s).
\end{align*}
This implies that the areas of every super-level set of $\phi_1^{-1}$ and $f$ are equal. Recalling the definition of the rearrangement in \eqref{vertical_rearrangement}, we arrive at
\[
f^*(x_2) = \int_0^\infty 1_{0\le x_2\le |\left\{ f> s\right\}|} ds =\int_0^\infty 1_{0\le x_2\le |\left\{ \phi_1^{-1}> s\right\}|} ds = \phi^{-1}(x_2).
\]  This proves $f^*(\phi_1(s)) = s.$
  \end{proof}
  
  Recall that for $f\in L^1(\Omega)$, its potential energy is defined as
 \begin{align}\label{potential_1}
  E_P(f):=\int_\Omega f(x)x_2 dx.
  \end{align}
 It is well-known that for any stratified function $\rho_s$ is a critical point of $E_P$ under any divergence-free perturbation. In the case where $f$ is sufficiently close to $\rho_s$, the next proposition will quantitatively demonstrate that  $f^*$ is the unique minimizer of $E_P$ among all functions with the same area of super-level sets.  \begin{proposition}\label{propoos}
  Suppose $\rho_s$ satisfies \eqref{strati_sd2}. There exists $\delta=\delta(\gamma,\rVert \rho_s\rVert_{H^4})>0$ such that if $f={\rho}_s$ on $\partial\Omega$ and  $\rVert f-{\rho}_s\rVert_{H^3(\Omega)}\le \delta$, then
 \begin{align}\label{energy_nondegeneracy}
 C^{-1} \rVert f-f^*\rVert_{L^2(\Omega)}^2\le E_P(f)-E_P(f^*) \le C\rVert f-f^*\rVert_{L^2(\Omega)}^2.
 \end{align}
 Moreover, we have
 \begin{align}\label{hatedogs}
 \rVert \partial_1f \rVert_{L^2(\Omega)}\ge C\rVert f-f^*\rVert_{L^2(\Omega)}.
 \end{align}
 The constant $C$ depends  only on $\gamma$ and $\rVert \rho_s\rVert_{H^4}$.
  \end{proposition}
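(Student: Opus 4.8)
The plan is to exploit the level-set parametrization furnished by Lemma~\ref{rearrangement_lem}. Write $\phi(x_1,s):=\phi_1(s)+h(x_1,s)$ for $(x_1,s)\in\mathbb{T}\times I$, so that $\phi(x_1,\cdot):I\to[0,1]$ is the inverse of the strictly decreasing map $x_2\mapsto f(x_1,x_2)$ and $f^*=\phi_1^{-1}$; combining \eqref{regularity_level} with \eqref{rkskd2sd} and \eqref{monotone1}, for $\delta$ small enough the quantities $|\partial_s\phi(x_1,s)|$, $|\partial_2 f(x_1,x_2)|$ and $|\partial_2 f^*(x_2)|$ are all bounded above and below by positive constants depending only on $\gamma$ and $\rVert\rho_s\rVert_{H^4}$. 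Since $f=\rho_s$ on $\partial\Omega$, writing $a:=\rho_s(1)$, $b:=\rho_s(0)$ (so $I=[a,b]$) we have $\phi(x_1,b)=0=\phi_1(b)$ and $\phi(x_1,a)=1=\phi_1(a)$, hence $h(x_1,a)=h(x_1,b)=0$ for every $x_1$. For fixed $x_1$, the substitution $x_2=\phi(x_1,s)$ and $f(x_1,\phi(x_1,s))=s$ give $\int_0^1 f(x_1,x_2)\,x_2\,dx_2=\int_I s\,\phi(x_1,s)(-\partial_s\phi(x_1,s))\,ds$; integrating over $x_1$, using $\phi(-\partial_s\phi)=-\tfrac12\partial_s(\phi^2)$ together with $\int_\mathbb{T}h(x_1,s)\,dx_1=0$ (so $\int_\mathbb{T}\phi^2\,dx_1=2\pi\phi_1^2+\int_\mathbb{T}h^2\,dx_1$), we obtain
\[
E_P(f)=-2\pi\int_I s\,\phi_1\,\partial_s\phi_1\,ds-\frac12\int_I s\,\partial_s\!\Big(\int_\mathbb{T}h(x_1,s)^2\,dx_1\Big)ds .
\]
The same computation with $h\equiv 0$ (i.e.\ applied to $f^*$) gives $E_P(f^*)=-2\pi\int_I s\,\phi_1\,\partial_s\phi_1\,ds$. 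Subtracting, integrating by parts in $s$, and using $h(x_1,a)=h(x_1,b)=0$ to kill the boundary terms, we arrive at the identity $E_P(f)-E_P(f^*)=\tfrac12\int_I\int_\mathbb{T}h(x_1,s)^2\,dx_1\,ds$.

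Next I would pass from $\int_I\int_\mathbb{T}h^2$ to $\rVert f-f^*\rVert_{L^2(\Omega)}^2$. For fixed $x_1$, the substitution $x_2=\phi(x_1,s)$ in $\int_0^1(f-f^*)^2(x_1,x_2)\,dx_2$ together with $f(x_1,\phi(x_1,s))=s=f^*(\phi_1(s))$ turns the integrand into $\big(f^*(\phi_1(s)+h(x_1,s))-f^*(\phi_1(s))\big)^2=\big(\int_0^{h(x_1,s)}\partial_2 f^*(\phi_1(s)+t)\,dt\big)^2$, which is comparable to $h(x_1,s)^2$ because $\partial_2 f^*$ is bounded above and below and the segment $[\phi_1(s),\phi_1(s)+h(x_1,s)]$ lies in $[0,1]$; since also $|\partial_s\phi|\sim 1$, this yields $\int_0^1(f-f^*)^2\,dx_2\sim\int_I h(x_1,s)^2\,ds$. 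Integrating over $x_1$ and combining with the identity above proves \eqref{energy_nondegeneracy}.

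For \eqref{hatedogs} I would differentiate the level-set relation $f(x_1,\phi(x_1,s))=s$ in $x_1$: since $\partial_1\phi=\partial_1 h$ and $|\partial_2 f|$ is bounded below (by $\gamma/2$, say), this gives the pointwise bound $|\partial_1 h(x_1,s)|\le\tfrac{2}{\gamma}\,|\partial_1 f(x_1,\phi(x_1,s))|$. As $\int_\mathbb{T}h(x_1,s)\,dx_1=0$, the Poincaré inequality on $\mathbb{T}$ gives $\int_\mathbb{T}h(x_1,s)^2\,dx_1\le C\int_\mathbb{T}\partial_1 f(x_1,\phi(x_1,s))^2\,dx_1$; integrating in $s\in I$ and changing variables back through $x_2=\phi(x_1,s)$ (where $|ds|\le C\,|dx_2|$) produces $\int_I\int_\mathbb{T}h^2\,dx_1\,ds\le C\,\rVert\partial_1 f\rVert_{L^2(\Omega)}^2$, and combining this with the previous step gives \eqref{hatedogs}.

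The one genuinely structural step is the identity $E_P(f)-E_P(f^*)=\tfrac12\int_I\int_\mathbb{T}h^2$: the excess potential energy is exactly (half) the squared $L^2$-norm of the horizontal displacement $h$ of the level sets of $f$ relative to those of $f^*$, which is a finite-difference manifestation of the coercivity of the second variation of $E_P$ at its minimizer. The care required there is in the two changes of variables — the Jacobian bounds $|\partial_s\phi|\sim1$ genuinely use that $\delta$ is small — and in the vanishing of the boundary terms after integrating by parts in $s$, which is precisely where the hypothesis $f=\rho_s$ on $\partial\Omega$ enters. The remaining steps rest only on the uniform two-sided Jacobian bounds and the one-dimensional Poincaré inequality, so I expect no real difficulty there.
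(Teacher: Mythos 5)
Your proposal is correct and follows essentially the same route as the paper: both rest on the level-set parametrization of Lemma~\ref{rearrangement_lem}, the exact identity $E_P(f)-E_P(f^*)=\tfrac12\int_I\int_{\mathbb{T}}h^2$ (the paper evaluates the boundary terms $\phi(x_1,\rho_s(0))=0$, $\phi(x_1,\rho_s(1))=1$ directly instead of integrating by parts in $s$, which is the same computation), the comparability $\rVert f-f^*\rVert_{L^2}\sim\rVert h\rVert_{L^2}$ via the change of variables and the two-sided bounds on $\partial_2 f^*$ and $\partial_s\phi$, and for \eqref{hatedogs} the differentiation of the level-set relation in $x_1$ combined with the Poincar\'e inequality in $x_1$. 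No gaps; the differences are purely cosmetic.
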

  \begin{proof}
 Let us prove \eqref{energy_nondegeneracy} first. For sufficiently small $\delta>0$,  Lemma~\ref{rearrangement_lem} ensures the existence of  $\phi_1,h$ such that 
  \begin{align}\
 \int_{\mathbb{T}}h(x,s)dx &= 0,\quad    f(x,\phi_1(s)+h(x,s))=s=f^*(\phi_1(s)),\label{level_32}
  \end{align}
  and
  \begin{align}\label{level_estimate32}
 \rVert\partial_{ss}(\phi_1-\phi_0) \rVert_{L^\infty}+  \rVert\partial_s\phi_1 -\partial_s\phi_0\rVert_{L^\infty} + \rVert h\rVert_{L^\infty}+\rVert \partial_sh\rVert_{L^\infty}\le C\delta,
  \end{align}
  where $\phi_0$ is the inverse of $\rho_s$, which trivially implies $\partial_s\phi_0(s)=\frac{1}{\partial_2\rho_s(\phi_0(s))}$. From \eqref{strati_sd2}, it follows that
  \begin{align}\label{lowerbs}
  0 < \frac{1}{C}\le -\partial_s\phi_0 <C.
  \end{align}
  Since $f^*$ is the inverse of $\phi_1$, the inverse function theorem and the estimates for $\phi_1$ in  \eqref{level_estimate32} tell us that
  \begin{align}\label{fs2t2ar1}
  \rVert \partial_{22}f^*-\partial_{22}\rho_s\rVert_{L^\infty(\Omega)}+  \rVert \partial_{2}f^*-\partial_2\rho_s\rVert_{L^\infty(\Omega)}\le  C\delta.
  \end{align}
  Denoting $\phi(x,s):=\phi_1(s)+h(x_1,s)$ and using  the change of variables, $x_2\to \phi(x_1,s)$, we have
  \begin{align}
   \rVert f-f^*\rVert_{L^2(\Omega)}^2 &= \int_{\mathbb{T}}\int_I |f(x,\phi(x,s))-f^*(\phi(x,s))|^2|\partial_s\phi(x,s)|dxds.\label{L2est}
  \end{align}
 Thanks to \eqref{level_32}, we have
 \begin{align}\label{fatagain}
 f(x,\phi(x,s))&-f^*(\phi(x,s))\nonumber\\
 & =s- f^{*}(\phi_1(s)) + \left(f^*(\phi_1(s)) -f^*(\phi_1(s) + h(x,s))\right)\nonumber\\
 & = f^*(\phi_1(s)) -f^*(\phi_1(s) + h(x,s))\nonumber\\
 & = \partial_2 f^*(\phi_1(s))h(x,s) + O \left(|\partial_{22}f^*||h(x_1,s)|^2\right)\nonumber\\
 & =\partial_2\rho_s h(x,s) + O\left(|\partial_2(\rho_s-f^*)||h(x_1,s)|\right) +O \left(|\partial_{22}f^*||h(x,s)|^2\right)\nonumber\\
 & = \partial_2\rho_sh(x,s) + O(\delta |h(x,s)|),
 \end{align}
 where  the last equality follows from \eqref{fs2t2ar1}. Note that \eqref{level_estimate32} also implies $|\partial_s\phi-\partial_s\phi_0|\le_C\delta$, thus \eqref{lowerbs} gives
   \begin{align}\label{changela}
  0 < \frac{1}{C}\le -\partial_s\phi \le C.
 \end{align}
 Plugging this and \eqref{fatagain} into  \eqref{L2est}, we obtain, for sufficiently small $\delta>0$,
 \begin{align}\label{hpt}
C^{-1}\rVert h\rVert_{L^2(\mathbb{T}\times [0,1])}\le \rVert f-f^*\rVert_{L^2(\Omega)}\le C\rVert h\rVert_{L^2(\mathbb{T}\times[0,1])}.
 \end{align}
 On the other hand, $E_P(f)$  can be written as
 \begin{align*}
 E_P(f)& = \int_{\mathbb{T}}\int_{0}^1 f(x_1,x_2)x_2 dx_2 \\
 &= -\int_{\mathbb{T}}\int_I f(x_1,\phi(x_1,s))\phi(x_1,s)\partial_{s}\phi(x_1,s)dsdx_1\\
 &= -\int_{\mathbb{T}}\int_I s \frac{1}{2}\partial_s\left(\phi(x_1,s) \right)^2dsdx_1.
 \end{align*}
 Since $f=f^*=\rho_s$ on $\partial \Omega$ and $\rho_s$ is strictly decreasing, we have 
 \[
 \phi(x_1,\rho_s(0))=\phi_1(\rho_s(0)) =0,\quad \phi(x_1,\rho_s(1)) = \phi_1(\rho_s(1)) = 1.
 \]
Using this, we can continue the computation above as
 \begin{align*}
 E_P(f)& = -\frac{1}{2}\int_{\mathbb{T}} \phi(x_1,\rho_s(0))^2 - \phi(x_1,\rho_s(1))^2dx_1 + \frac{1}{2}\int_{\mathbb{T}}\int_I (\phi_1(s)+h(x_1,s))^2dsdx_1\\
 &=\pi + \frac{1}{2}\int_{\mathbb{T}}\int_0^1 \phi_1(s)^2 +h(x_1,s)^2dsdx_1,
 \end{align*}
where the last equality follows from that $h$ has zero average in $x$. Similarly, we have
 \begin{align*}
 E_P(f^*) &= \int_{\mathbb{T}}\int_0^1 f^*(x_2)dx = - \int_{\mathbb{T}}\int_I f^*(\phi_1(s))\phi_1(s)\partial_s\phi_1(s)dsdx_1 =  \pi+ \frac{1}{2}\int_{\mathbb{T}}\int_I \phi_1(s)^2ds.
 \end{align*}
 Consequently, we arrive at $
 E(f) -E(f^*) = \frac{1}{2}\int_{\mathbb{T}}\int_I |h(x_1,s)|^2 dsdx_1.$
 Combining this with \eqref{hpt}, the estimates in \eqref{energy_nondegeneracy} is verified.
 
  Now, let us prove \eqref{hatedogs}.  Differentiating \eqref{level_32} in $x_1$, we see that
  \[
  0 = \partial_1 (f(x_1,\phi(x_1,s))) = \partial_1f(x_1,\phi(x,s))+\partial_2f(x_1,\phi(x_1,s))\partial_1h(x_1,s).
  \]
 Similarly, diffierentiating \eqref{level_32} in $s$ yields  $ 1=  \partial_2 f(x_1,\phi(x_1,s))\partial_s\phi(s)$, thus 
  \[
\partial_1 f(x_1,\phi(x_1,s)) =-\frac{\partial_1h(x_1,s)}{\partial_s\phi(x_1,s)}.
  \]
 Then, using the change of variables $x_2\to \phi(x_1,s)$ and also using \eqref{changela}, we obtain
 \[
 \rVert \partial_1 f\rVert_{L^2(\Omega)}\ge C\rVert \partial_1h\rVert_{L^2(\mathbb{T}\times I)}\ge C\rVert h\rVert_{L^2(\mathbb{T}\times I)},
 \] 
 where the last inequality follows from the zero-average in $x$ of $h$ in \eqref{level_32} and the Poincar\'e inequality. Therefore, combining this with \eqref{hpt}, we conclude that \eqref{hatedogs} holds.
  \end{proof}
  

\section{Stability in the IPM equation}\label{IPMsection}
In this section, we aim to prove the asymptotic stability for the incompressible porous media equation \eqref{IPM}. Throughout the section, we will fix
\begin{align}\label{tpypu}
\rho_s(x_2):=1-x_2.
\end{align}
\subsection{Preliminaries for the IPM}
Let us review important previous results concerning the local existence of the IPM equation \eqref{IPM} in the domain $\Omega=\mathbb{T}\times (0,1)$. For further details, we refer readers to the paper by Castro--C\'ordoba--Lear \cite{castro2019global}.

 We recall the following spaces from \cite[Section 1]{castro2019global}: For $k\in \mathbb{N}$, 
 \begin{align}\label{spacex}
 X^k(\Omega)&:=\left\{ f\in H^k(\Omega) :\partial_{2}^nf|_{\partial \Omega}=0,\text{ for }n=0,2,4,.\ldots k^\star\right\}, \text{ where }k^\star:=\begin{cases}
 k-2, &\text{ if $k$ is even},\\
 k-1 & \text{ if $k$ is odd}.
 \end{cases}
 \end{align}
 That is, $X^k(\Omega)$ is the closure of $\left\{ f\in C^\infty({\Omega}):\partial_{2}^nf|_{\partial \Omega}=0,\text{ for }n=0,2,4,.\ldots k^\star\right\}$ in the norm $H^k$. It is worth noting that the usual trace theorem, $H^k(\Omega)\hookrightarrow H^{k-1}(\partial\Omega)$, ensures that the vanishing normal derivatives in the above definition is well-defined.  For convenience, we  denote
 \[
 X^\infty(\Omega):=\cap_{k\in\mathbb{N}}X^k(\Omega).
 \] 
 By definition, it is clear from \eqref{cc_comv} that 
 \begin{align}\label{rra1}
 C^\infty_0(\Omega)\subset X^\infty(\Omega).
 \end{align}
   If $f\in X^\infty(\Omega)$, it holds that
  \begin{align}\label{tt1}
  \partial_1^{n_1}\partial_2^{2n_2}f\in X^\infty(\Omega),\text{ for any $n_1,n_2\in \mathbb{N}\cup\left\{0\right\}$, }
  \end{align}
  where $\partial_n$ denotes the normal derivative of $f$ on $\partial_\Omega$. Especially, we can use the integration by parts in the vertical variable without a boundary integral:
  \begin{align}\label{Xprop}
  \int_{\Omega} \partial_2^{k+1}f(x)\partial_2^{k+1}g(x)dx = - \int_{\Omega}\partial_2^{k}f(x)\partial_2^{k+2}g(x)dx,\text{ for any $k\in\mathbb{N}\cup\left\{0\right\}$ and $f,g\in X^\infty(\Omega)$.}
  \end{align}
In a usual domain without boundary, for instance $\mathbb{R}^2$, it is trivial that the Sobolev norms can be bounded by looking at only each single directional derivatives, that is, 
\[
\rVert f\rVert_{\dot{H}^k(\mathbb{R}^2)}\le C\left( \rVert \partial_1^{k}f\rVert_{L^2(\mathbb{R}^2)}+\rVert \partial_2^{k}f\rVert_{L^2(\mathbb{R}^2)}\right).
\] In a bounded domain, this property may depend on the boundary condition, since a mixed derivative might not be well controlled. While the next lemma seems intuitively obvious, we will give a proof for the sake of completeness, although the proof will be postponed to Appendix~\ref{Proof_oflemma3_1}.

 \begin{lemma}\label{onedirection}
 Let $f\in X^\infty(\Omega)$. For any $n,k\in \mathbb{N}\cup\left\{ 0\right\}$, we have
 \begin{align}\label{desired_es}
 \rVert \partial_{1}^n\partial_2^kf\rVert_{L^2(\Omega)}\le C_{n,k}\left(\rVert \partial_1^{n+k}f\rVert_{L^2(\Omega)} +\rVert \partial_2^{n+k}f\rVert_{L^2(\Omega)}  \right).
 \end{align}
 Consequently, we have
 \begin{align}\label{oned}
 \rVert f\rVert_{{H}^k}\le C_{k}\left( \rVert \partial_1^kf\rVert_{L^2} + \rVert \partial_2^k f\rVert_{L^2}\right) \text{ for all $k\ge 0$}.
 \end{align}
 \end{lemma}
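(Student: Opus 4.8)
The plan is to expand $f$ in the basis of eigenfunctions adapted to the boundary conditions encoded in $X^\infty(\Omega)$, which simultaneously diagonalizes $\partial_1$ and $\partial_2^2$. Since $f\in X^\infty(\Omega)\subset\cap_{k\in\mathbb{N}} H^k(\Omega)$, Sobolev embedding gives $f\in C^\infty(\overline\Omega)$, and continuity of the trace map forces $\partial_2^{2j}f=0$ on $\partial\Omega$ for every $j\ge0$. These are precisely the conditions guaranteeing that, for each fixed $x_1$, the odd $2$-periodic reflection of $x_2\mapsto f(x_1,x_2)$ is a $C^\infty$ periodic function; hence $f$ admits the expansion
\[
f(x_1,x_2)=\sum_{\xi\in\mathbb{Z}}\sum_{\ell\ge1}f_{\xi,\ell}\,e^{i\xi x_1}\sin(\pi\ell x_2),
\]
with rapidly decaying coefficients $f_{\xi,\ell}$, and this series may be differentiated term by term in both variables.

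In this representation $\partial_1$ acts as multiplication by $i\xi$, while $\partial_2$ acts as multiplication by $\pi\ell$ together with the replacement $\sin(\pi\ell x_2)\mapsto\cos(\pi\ell x_2)$ (up to sign) — a swap that preserves the $L^2(0,1)$-norm of each mode. Consequently, Parseval's identity yields, for all $n,m\ge0$,
\[
\rVert\partial_1^n\partial_2^m f\rVert_{L^2(\Omega)}^2=c\sum_{\xi,\ell}|\xi|^{2n}(\pi\ell)^{2m}\,|f_{\xi,\ell}|^2
\]
for a fixed $c>0$ independent of $n$, $m$, $f$. Estimate \eqref{desired_es} now reduces to the elementary pointwise inequality $a^{2n}b^{2k}\le a^{2(n+k)}+b^{2(n+k)}$, valid for $a,b\ge0$ by weighted AM--GM, applied with $a=|\xi|$ and $b=\pi\ell$: multiplying by $|f_{\xi,\ell}|^2$ and summing gives $\rVert\partial_1^n\partial_2^k f\rVert_{L^2}^2\le\rVert\partial_1^{n+k}f\rVert_{L^2}^2+\rVert\partial_2^{n+k}f\rVert_{L^2}^2$. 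For \eqref{oned} one bounds $\rVert f\rVert_{H^k}^2\le C_k\sum_{n+m\le k}\rVert\partial_1^n\partial_2^m f\rVert_{L^2}^2$ and observes that for each mode and each $(n,m)$ with $n+m\le k$ one has $|\xi|^{2n}(\pi\ell)^{2m}\le|\xi|^{2n}(\pi\ell)^{2(k-n)}\le|\xi|^{2k}+(\pi\ell)^{2k}$, since $\pi\ell\ge\pi>1$ for $\ell\ge1$ forces $(\pi\ell)^{2m}\le(\pi\ell)^{2(k-n)}$; summing over the finitely many admissible $(n,m)$ and over the modes gives $\rVert f\rVert_{H^k}^2\le C_k\big(\rVert\partial_1^k f\rVert_{L^2}^2+\rVert\partial_2^k f\rVert_{L^2}^2\big)$.

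The only step that is not bookkeeping is the first one: the expansion and its term-by-term differentiability rely entirely on the $X^\infty$ boundary conditions, since without the vanishing of all even normal derivatives the odd reflection would fail to be smooth. A Fourier-free route to \eqref{desired_es} is also available: two integrations by parts (one in $x_1$, one in $x_2$) give, for $n,k\ge1$, the log-convexity relation $\rVert\partial_1^n\partial_2^k f\rVert_{L^2}^2\le\rVert\partial_1^{n+1}\partial_2^{k-1}f\rVert_{L^2}\,\rVert\partial_1^{n-1}\partial_2^{k+1}f\rVert_{L^2}$, in which every boundary term vanishes because at each integration by parts the two factors carry $x_2$-derivatives whose orders sum to an odd number, so one of them is an even normal derivative and hence vanishes on $\partial\Omega$; iterating this along the anti-diagonal between the endpoints $\partial_1^{n+k}f$ and $\partial_2^{n+k}f$ and finishing with Young's inequality reproduces \eqref{desired_es} (the degenerate case where an endpoint norm vanishes is immediate, as the recursion then forces the whole anti-diagonal to vanish).
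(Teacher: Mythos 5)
Your proof is correct, but your primary argument takes a genuinely different route from the paper. The paper proves \eqref{desired_es} by induction on $k$: using \eqref{Xprop} it integrates by parts twice (once in $x_1$, once in $x_2$, with the boundary term killed because one factor carries an even number of vertical derivatives) to get exactly your anti-diagonal relation in the Cauchy--Schwarz/Young form $\rVert \partial_{1}^{n-j}\partial_2^{k+1+j}f\rVert^2_{L^2}\le \tfrac12\bigl(\rVert \partial_1^{n-j+1}\partial_2^{k+j}f\rVert_{L^2}^2 + \rVert\partial_1^{n-j-1}\partial_2^{k+j+2}f\rVert_{L^2}^2\bigr)$, and then closes the estimate by summing these over $j$ and cancelling the interior terms, rather than by a log-convexity interpolation; so your ``Fourier-free route'' is essentially the paper's proof, with the telescoping sum replacing the interpolation (and, incidentally, the paper's weighted-sum bookkeeping avoids the zero-norm degeneracy you have to discuss separately). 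Your main argument, by contrast, exploits the $X^\infty$ boundary conditions to perform the odd $2$-periodic reflection in $x_2$ and expand $f$ in the basis $e^{i\xi x_1}\sin(\pi\ell x_2)$, reducing \eqref{desired_es} to the symbol inequality $|\xi|^{2n}(\pi\ell)^{2k}\le|\xi|^{2(n+k)}+(\pi\ell)^{2(n+k)}$ via Parseval; this is cleaner and has the added benefit that \eqref{oned}, including the control of the lower-order derivatives, comes for free from the spectral gap $\pi\ell\ge\pi>1$ (a Poincar\'e-type gain the paper leaves implicit when passing from \eqref{desired_es} to \eqref{oned}), at the cost of the preliminary verification that the reflected extension is smooth so the series may be differentiated term by term --- a step you correctly identify as the only place the $X^\infty$ conditions enter, and which you justify adequately.
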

 
  Let us consider a solution $\rho(t)$ to the IPM equation. We denote
 \begin{align}\label{def_rho2}
 \theta(t):=\rho(t)-\rho_s.
 \end{align}
  Substituting $\rho=\theta+{\rho}_s$ in \eqref{IPM}, one can easily see that $\theta(t)$ solves
 \begin{align}\label{IPMperturbed}
 \begin{cases}
 \theta_t + u\cdot\nabla \theta = u_2,\\
 u=\nabla^\perp \Psi,
 \end{cases}
 \text{ with }
 \begin{cases}
 -\Delta \Psi = \partial_1\theta, & \text{ in $\Omega$},\\
 \Psi = 0, & \text{ on  $\partial \Omega$.}
 \end{cases}
 \end{align}
The next lemma tells us that if the solution $\theta(t)\in X^k$, then the stream function $\Psi(t)$ behaves in a similar manner. 
    \begin{lemma}\cite[Lemma 3.1]{castro2019global}\label{lemma_psi}
    Let $f\in X^k(\Omega)$ and let $\Psi$ be a solution to
    \[
    \begin{cases}
    \Delta \Psi = - \partial_1f & \text{ in $\Omega$},\\
    \Psi = 0 & \text{ on $\partial \Omega$.}
    \end{cases}
    \]
    Then $\Psi\in X^{k+1}(\Omega)$ and it satisfies $\rVert \Psi\rVert_{H^{k+1}}\le C_k \rVert f\rVert_{H^k}$.
    \end{lemma}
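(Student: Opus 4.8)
The plan is to split the statement into a purely quantitative elliptic estimate and a separate verification of the boundary conditions defining $X^{k+1}(\Omega)$.

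\emph{Step 1: existence and the bound $\|\Psi\|_{H^{k+1}}\le C_k\|f\|_{H^k}$.} Since the boundary $\partial\Omega=\mathbb{T}\times\{0,1\}$ is flat (in particular smooth), the Dirichlet Laplacian $\Delta\colon H^{k+1}(\Omega)\cap H^1_0(\Omega)\to H^{k-1}(\Omega)$ is an isomorphism (with $H^{-1}$ in place of $H^{k-1}$ when $k=0$). As $f\in H^k(\Omega)$ we have $\partial_1 f\in H^{k-1}(\Omega)$ with $\|\partial_1 f\|_{H^{k-1}}\le\|f\|_{H^k}$, so the unique solution $\Psi\in H^1_0(\Omega)$ of $\Delta\Psi=-\partial_1 f$ lies in $H^{k+1}(\Omega)$ and obeys $\|\Psi\|_{H^{k+1}}\le C_k\|\partial_1 f\|_{H^{k-1}}\le C_k\|f\|_{H^k}$. (If one prefers to avoid negative-order spaces, the low-order bound $\|\Psi\|_{H^1}\le\|f\|_{L^2}$ follows from testing the equation against $\Psi$ and integrating by parts in the periodic variable, $\int_\Omega|\nabla\Psi|^2=\int_\Omega\partial_1 f\,\Psi=-\int_\Omega f\,\partial_1\Psi$, together with the Poincar\'e inequality.)

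\emph{Step 2: $\Psi\in X^{k+1}(\Omega)$.} By \eqref{spacex} this amounts to $\partial_2^{2m}\Psi|_{\partial\Omega}=0$ for every integer $0\le m\le (k+1)^\star/2$, which I would prove by induction on $m$. The case $m=0$ is the Dirichlet condition $\Psi|_{\partial\Omega}=0$. For the inductive step, apply $\partial_2^{2m-2}$ to $\Delta\Psi=-\partial_1 f$ and use $\Delta=\partial_1^2+\partial_2^2$ to obtain
\[
\partial_1^2\bigl(\partial_2^{2m-2}\Psi\bigr)+\partial_2^{2m}\Psi=-\partial_1\bigl(\partial_2^{2m-2}f\bigr)\qquad\text{in }\Omega .
\]
Restricting to $\partial\Omega$: since $\partial_1$ is tangential to $\partial\Omega$, it maps functions vanishing on $\partial\Omega$ to functions vanishing on $\partial\Omega$; hence $\partial_1^2(\partial_2^{2m-2}\Psi)|_{\partial\Omega}=0$ by the induction hypothesis applied to $m-1$, and $\partial_1(\partial_2^{2m-2}f)|_{\partial\Omega}=0$ because $\partial_2^{2m-2}f|_{\partial\Omega}=0$ is one of the conditions defining $X^k(\Omega)$ as soon as $2m-2\le k^\star$. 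Therefore $\partial_2^{2m}\Psi|_{\partial\Omega}=0$. A short parity check confirms the required range of $m$ is compatible with what $f\in X^k$ supplies: when $k$ is odd, $(k+1)^\star=k^\star$ and $2m-2\le k^\star-2$ throughout; when $k$ is even, $(k+1)^\star=k^\star+2$ and the last step $2m=k$ uses exactly the top available condition $2m-2=k^\star=k-2$ on $f$.

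\emph{Main obstacle.} Step 1 is routine. The delicate part is the bookkeeping in Step 2: one must track that the boundary conditions invoked on $f$ are precisely those guaranteed by $f\in X^k(\Omega)$ (the matching is tight when $k$ is even), and that every trace used is meaningful. For the latter, $\partial_2^{2m-2}\Psi\in H^{k+1-(2m-2)}(\Omega)$ and $\partial_2^{2m-2}f\in H^{k-(2m-2)}(\Omega)$, which in the worst case $2m=k$ equal $H^3$ and $H^2$; the displayed identity then holds in $H^1(\Omega)$ and the trace theorem applies with values in $H^{1/2}(\partial\Omega)$. Alternatively, one can first carry out Step 2 for $f\in X^\infty(\Omega)$, where all manipulations are classical, and then pass to the limit using density of $X^\infty(\Omega)$ in $X^k(\Omega)$ together with the estimate of Step 1.
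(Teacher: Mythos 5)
Your proposal is correct, but note that this lemma is not proved in the paper at all: it is imported verbatim from \cite[Lemma 3.1]{castro2019global}, so there is no internal proof to compare against. What you wrote is a legitimate self-contained argument. Step 1 is indeed routine (the boundary $\mathbb{T}\times\{0,1\}$ is smooth and flat, so Dirichlet elliptic regularity gives $\rVert\Psi\rVert_{H^{k+1}}\le C_k\rVert\partial_1 f\rVert_{H^{k-1}}\le C_k\rVert f\rVert_{H^k}$), and your Step 2 correctly identifies the only delicate points: the parity bookkeeping and the meaning of the traces. Your count checks out — for $k$ odd one needs $\partial_2^{2m}\Psi|_{\partial\Omega}=0$ only up to $2m=k-1$, so the conditions on $f$ are used with slack, while for $k$ even the final step $2m=k$ consumes exactly the top condition $\partial_2^{k-2}f|_{\partial\Omega}=0$ of $X^k$ — and in the worst case all terms of the differentiated equation lie in $H^1(\Omega)$, so the traces exist in $H^{1/2}(\partial\Omega)$ and commute with the tangential derivative $\partial_1$, as you say. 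The alternative you mention (work with $f\in X^\infty$ and pass to the limit) is also consistent with the paper, since \eqref{spacex} defines $X^k$ precisely as a closure of smooth functions with those boundary conditions. By way of comparison, the usual proof in this periodic-channel setting (and the spirit of the cited reference) exploits the vanishing even vertical derivatives to extend $f$ by odd reflection in $x_2$, or equivalently expands in a sine series, so that the Dirichlet inverse Laplacian acts diagonally and $\Psi\in X^{k+1}$ comes out automatically; your route trades that reflection/Fourier machinery for abstract elliptic regularity plus an induction that propagates the boundary conditions through the equation, which is a perfectly sound exchange.
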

 The local well-posedness to the equation~\eqref{IPMperturbed} was established in \cite{castro2019global}:
 \begin{theorem}\cite[Theorem 4.1]{castro2019global}\label{lwp}
 Let $k\in \mathbb{N}$ with $k\ge 3$. For any $\theta_0\in X^k(\Omega)$, there exists a time $T=T(\rVert \theta_0\rVert_{H^3})>0$ and a unique solution $\theta\in C(0,T; X^k(\Omega))$ for the equation \eqref{IPMperturbed}. 
 \end{theorem}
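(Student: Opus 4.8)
The plan is to prove the statement by the classical quasilinear scheme adapted to the channel geometry: construct approximate solutions that preserve the structure of $X^k$, establish a uniform a priori bound in $X^k$ on a time interval depending only on $\rVert\theta_0\rVert_{H^3}$, pass to the limit by compactness, prove uniqueness via an $L^2$ estimate on the difference of two solutions, and finally upgrade the convergence to $C([0,T];X^k)$. For the approximation step one can regularize the transport velocity by a mollification commuting with the relevant boundary conditions (or use a Galerkin truncation in an $X^k$-adapted basis), solving the linear transport equation $\partial_t\theta^\eps + (\mathcal{J}_\eps u^\eps)\cdot\nabla\theta^\eps = (\mathcal{J}_\eps u^\eps)_2$ with $u^\eps = \nabla^\perp\Psi^\eps$, $-\Delta\Psi^\eps = \partial_1\theta^\eps$, $\Psi^\eps|_{\partial\Omega}=0$; this advection is along a smooth divergence-free field tangent to $\partial\Omega$, hence globally well-posed, and it is readily checked to preserve $X^k$.

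For the a priori bound I would work with the single-direction derivatives $\partial_1^k$ and $\partial_2^k$, since Lemma~\ref{onedirection} reduces $\rVert\theta\rVert_{H^k}$ to $\rVert\partial_1^k\theta\rVert_{L^2}+\rVert\partial_2^k\theta\rVert_{L^2}$ and the algebra \eqref{tt1}--\eqref{Xprop} makes integration by parts in those derivatives boundary-term free on $X^\infty$. Testing $\partial_1^k$ (resp.\ $\partial_2^k$) of \eqref{IPMperturbed} against $\partial_1^k\theta$ (resp.\ $\partial_2^k\theta$), the top-order transport contribution vanishes because $\nabla\cdot u=0$ and $u\cdot\vec{n}=u_2=\partial_1\Psi|_{\partial\Omega}=0$; the commutator is controlled by a Kato--Ponce estimate, $\rVert[\partial^k,u\cdot\nabla]\theta\rVert_{L^2}\le C(\rVert\nabla u\rVert_{L^\infty}\rVert\theta\rVert_{H^k}+\rVert\nabla\theta\rVert_{L^\infty}\rVert u\rVert_{H^k})$; and the source term satisfies $\rVert u_2\rVert_{H^k}=\rVert\partial_1\Psi\rVert_{H^k}\le\rVert\Psi\rVert_{H^{k+1}}\le C\rVert\theta\rVert_{H^k}$ by Lemma~\ref{lemma_psi}. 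Using Lemma~\ref{lemma_psi} again for $\rVert u\rVert_{H^k}\le C\rVert\theta\rVert_{H^k}$ and the two-dimensional embedding $H^2\hookrightarrow L^\infty$ to control $\rVert\nabla u\rVert_{L^\infty}+\rVert\nabla\theta\rVert_{L^\infty}\le C\rVert\theta\rVert_{H^3}$, one arrives at
\[
\frac{d}{dt}\rVert\theta(t)\rVert_{H^k}^2\le C\bigl(1+\rVert\theta(t)\rVert_{H^3}\bigr)\rVert\theta(t)\rVert_{H^k}^2 .
\]
For $k=3$ this is a Riccati-type inequality, solvable on $[0,T]$ with $T=T(\rVert\theta_0\rVert_{H^3})$; for $k>3$ the $H^3$ norm is already controlled on that interval, so Gr\"onwall propagates the $H^k$ bound on the same $[0,T]$. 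These bounds hold uniformly in $\eps$, which together with the equation for $\partial_t\theta^\eps$ yields time-equicontinuity, hence compactness; the limit solves \eqref{IPMperturbed} and lies in $X^k$, since that space is closed under weak $H^k$ limits (by the trace theorem applied to the normal derivatives).

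For uniqueness, writing $w=\theta_1-\theta_2$ one gets $w_t+u_1\cdot\nabla w=(u_1-u_2)_2-(u_1-u_2)\cdot\nabla\theta_2$; from $-\Delta(\Psi_1-\Psi_2)=\partial_1 w$ with zero boundary data, $\rVert u_1-u_2\rVert_{L^2}\le C\rVert\Psi_1-\Psi_2\rVert_{H^1}\le C\rVert w\rVert_{L^2}$, so the basic $L^2$ energy estimate (the $u_1\cdot\nabla w$ term dropping by incompressibility and $u_1\cdot\vec{n}=0$) gives $\frac{d}{dt}\rVert w\rVert_{L^2}^2\le C(1+\rVert\theta_2\rVert_{H^3})\rVert w\rVert_{L^2}^2$, whence $w\equiv 0$ by Gr\"onwall. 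Continuity in time with values in $X^k$ then follows from weak-$*$ continuity plus a Bona--Smith argument ruling out norm jumps. The main obstacle, and the only genuinely channel-specific point, is verifying that the $X^k$ structure --- the vanishing of $\partial_2^{2j}\theta|_{\partial\Omega}$ for $2j\le k^\star$ --- is propagated by the evolution (and by the chosen regularization), which is precisely what legitimizes the boundary-free integration by parts above; this is handled inductively from \eqref{IPMperturbed} restricted to $\partial\Omega$, using $u_2|_{\partial\Omega}=0$, the implication $\theta|_{\partial\Omega}=0\Rightarrow\partial_1\theta|_{\partial\Omega}=0$, and the closure of the scale $X^m$ under the operations in \eqref{tt1}, \eqref{Xprop} together with Lemma~\ref{lemma_psi} ($\Psi\in X^{k+1}$).
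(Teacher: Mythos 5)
Note that the paper does not prove this statement itself—it is quoted verbatim from Castro--C\'ordoba--Lear \cite{castro2019global}—so there is no internal proof to compare against; your outline is the standard quasilinear energy-method argument that this reference follows (regularization preserving the $X^k$ structure, $H^k$ estimates via tame commutator bounds and Lemma~\ref{lemma_psi} with the transport term killed by $\nabla\cdot u=0$ and $u\cdot\vec n=0$, a Riccati bound at the $H^3$ level giving $T=T(\rVert\theta_0\rVert_{H^3})$, compactness, $L^2$ uniqueness, Bona--Smith continuity). You also correctly isolate the genuinely channel-specific issue, namely propagation of the vanishing even vertical derivatives on $\partial\Omega$, and your inductive boundary-restriction argument (or, equivalently, a Galerkin basis of the form $e^{imx_1}\sin(n\pi x_2)$, i.e., the odd-reflection symmetry class) does close using $\Psi\in X^{k+1}$, so the proposal is sound.
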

 
  Thanks to the local well-posedness theorem, we will  assume  that the initial data is smooth, that is,  $\theta_0\in X^\infty$. In view of the statement of Theorem~\ref{main_IPM}, the general case where $\theta_0\in H^k_0(\Omega)$ will be managed by  usual compactness argument in the end of the section.
  
\subsection{Energy estimates}\label{energy_conslea1}
In this section, we aim to derive an a priori energy estimate. The main result will be  given in Proposition~\ref{energy_IPM_estimate}. 

Let us recall the basic tame estimates concerning the Sobolev spaces. In the next lemma, we use the following notations: For $\alpha\in (\mathbb{N}\cup\left\{ 0\right\})^2$, 
 \[
 \alpha=(\alpha_1,\alpha_2),\quad |\alpha|:=\alpha_1+\alpha_2,\quad \partial^{\alpha}:=\partial_1^{\alpha_1}\partial_2^{\alpha_2}.
 \]
 \begin{lemma}\label{Simple_tame}\cite[Lemma 4.2]{castro2019global}
 Let $f,g\in C^\infty(\Omega)$. Then, for $\alpha,\beta\in (\mathbb{N}\cup\left\{ 0\right\})^2$, we have
 \begin{align*}
\rVert \partial^\alpha f\partial^\beta g\rVert_{L^2}&\le_{\alpha,\beta}  \rVert f\rVert_{H^{|\alpha|+|\beta|}}\rVert g\rVert_{L^\infty} +\rVert g\rVert_{H^{|\alpha|+|\beta|}}\rVert f\rVert_{L^\infty},\\
\rVert \partial^\alpha(fg)-f\partial^\alpha g\rVert_{L^2}&\le_{\alpha,\beta} \rVert f\rVert_{H^{|\alpha|}}\rVert g\rVert_{L^\infty}+\rVert f\rVert_{W^{1,\infty}}\rVert g\rVert_{H^{|\alpha|-1}}.
 \end{align*}
 \end{lemma}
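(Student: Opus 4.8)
The plan is to obtain both inequalities from the Gagliardo--Nirenberg interpolation inequalities together with Hölder's and Young's inequalities; beyond the fact that $\Omega$ is bounded in the vertical variable with a flat, smooth boundary, no geometry is needed. Throughout, write $j:=|\alpha|$, $k:=|\beta|$, $m:=j+k$, and recall the interpolation bound
\[
\|D^{i}w\|_{L^{2m/i}(\Omega)}\le C_{m,i}\,\|w\|_{H^{m}(\Omega)}^{i/m}\,\|w\|_{L^{\infty}(\Omega)}^{1-i/m},\qquad 0\le i\le m,
\]
with the convention that $i=0$ means the factor is $\|w\|_{L^{\infty}}$. For the first estimate, the cases $j=0$ or $k=0$ are immediate from Hölder (e.g. $\|f\,\partial^{\beta}g\|_{L^{2}}\le\|f\|_{L^{\infty}}\|g\|_{H^{m}}$), so assume $1\le j,k$. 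Hölder with the conjugate exponents $2m/j$ and $2m/k$ (note $\tfrac{j}{2m}+\tfrac{k}{2m}=\tfrac12$) gives $\|\partial^{\alpha}f\,\partial^{\beta}g\|_{L^{2}}\le\|\partial^{\alpha}f\|_{L^{2m/j}}\|\partial^{\beta}g\|_{L^{2m/k}}$, and the interpolation bound applied to each factor, after using $1-\tfrac{j}{m}=\tfrac{k}{m}$ and $1-\tfrac{k}{m}=\tfrac{j}{m}$ to regroup the four factors, yields $\|\partial^{\alpha}f\,\partial^{\beta}g\|_{L^{2}}\le C_{m}\big(\|f\|_{H^{m}}\|g\|_{L^{\infty}}\big)^{j/m}\big(\|f\|_{L^{\infty}}\|g\|_{H^{m}}\big)^{k/m}$. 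Since $\tfrac{j}{m}+\tfrac{k}{m}=1$, the weighted Young inequality $a^{j/m}b^{k/m}\le a+b$ finishes the first claim.

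For the commutator estimate I would use the Leibniz rule $\partial^{\alpha}(fg)-f\,\partial^{\alpha}g=\sum_{0\ne\beta'\le\alpha}\binom{\alpha}{\beta'}\,\partial^{\beta'}f\;\partial^{\alpha-\beta'}g$, a finite sum in which $\partial^{\beta'}f$ carries $i:=|\beta'|\ge1$ derivatives, $\partial^{\alpha-\beta'}g$ carries $\ell:=|\alpha-\beta'|\le|\alpha|-1$ derivatives, and $i+\ell=m:=|\alpha|$. The key is to interpolate \emph{one} derivative off $f$: applying Gagliardo--Nirenberg to $\nabla f$ and to $g$ gives $\|\partial^{\beta'}f\|_{L^{p}}\le C_{m}\|\nabla f\|_{H^{m-1}}^{\frac{i-1}{m-1}}\|\nabla f\|_{L^{\infty}}^{1-\frac{i-1}{m-1}}$ and $\|\partial^{\alpha-\beta'}g\|_{L^{q}}\le C_{m}\|g\|_{H^{m-1}}^{\frac{\ell}{m-1}}\|g\|_{L^{\infty}}^{1-\frac{\ell}{m-1}}$, where $\tfrac1p=\tfrac{i-1}{2(m-1)}$, $\tfrac1q=\tfrac{\ell}{2(m-1)}$ and $\tfrac1p+\tfrac1q=\tfrac12$ because $(i-1)+\ell=m-1$; degenerate exponents ($i=1$ or $\ell=0$) are read as the corresponding $L^{\infty}$ factor, and the case $m=1$ is trivial since then the sum is just $(\partial^{\alpha}f)g$ and $\|(\partial^{\alpha}f)g\|_{L^{2}}\le\|f\|_{W^{1,\infty}}\|g\|_{L^{2}}$. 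Applying Hölder, regrouping so that $\|\nabla f\|_{H^{m-1}}$ pairs with $\|g\|_{L^{\infty}}$ and $\|\nabla f\|_{L^{\infty}}$ with $\|g\|_{H^{m-1}}$ (the weights $\tfrac{i-1}{m-1}$ and $\tfrac{\ell}{m-1}$ summing to $1$), and using Young, each summand is bounded by $\|\nabla f\|_{H^{m-1}}\|g\|_{L^{\infty}}+\|\nabla f\|_{L^{\infty}}\|g\|_{H^{m-1}}\le\|f\|_{H^{m}}\|g\|_{L^{\infty}}+\|f\|_{W^{1,\infty}}\|g\|_{H^{m-1}}$; summing over the finitely many $\beta'$ gives the second claim.

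The only point that is not pure bookkeeping --- and hence the main (and, I expect, fairly minor) obstacle --- is that the Gagliardo--Nirenberg inequalities above are classically formulated on $\mathbb{R}^{n}$, whereas $\Omega$ is bounded in $x_{2}$. I would handle this by a Sobolev extension operator $E$ taking $H^{m}(\Omega)$ boundedly into $H^{m}$ of the domain with $x_{2}$ ranging over all of $\mathbb{R}$ and simultaneously bounded $L^{\infty}\to L^{\infty}$; since the non-periodic part of $\partial\Omega$ is flat, an explicit finite-order (Hestenes-type) reflection across $\{x_{2}=0\}$ and $\{x_{2}=1\}$ supplies such an $E$, after which one applies the Euclidean interpolation inequality to $Ew$ and restricts. (Alternatively, one simply invokes the Gagliardo--Nirenberg inequality directly on bounded Lipschitz domains.) Since $f,g$ are smooth up to the boundary, all norms in play are finite and no density step is required.
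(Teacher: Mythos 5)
Your proof is correct. Note that the paper does not prove this lemma at all but simply cites \cite[Lemma 4.2]{castro2019global}; what you have written is the standard Gagliardo--Nirenberg/Moser proof of these tame product and commutator estimates (H\"older with exponents $2m/j$, $2m/k$, interpolation of intermediate derivatives against $\|\cdot\|_{H^m}$ and $\|\cdot\|_{L^\infty}$, weighted Young, and for the commutator the Leibniz expansion with one derivative peeled off $f$), and your treatment of the degenerate endpoint cases and of the bounded channel -- via a Hestenes-type reflection extension that is simultaneously bounded on $H^m$ and $L^\infty$, or equivalently the Gagliardo--Nirenberg inequality on Lipschitz domains, where the multiplicative form with the inhomogeneous norm $\|w\|_{H^m}$ is the correct one to use -- is sound.
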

  \begin{lemma}\label{average_difference}
For  $f\in H^1(\Omega)$ and  $\overline{f}(x_2):=\frac{1}{2\pi}\int_{\mathbb{T}} f(x_1,x_2)dx_1$, it holds that
\[
\rVert f-\overline{f}\rVert_{L^\infty}\le_C \rVert \partial_1f\rVert_{H^1}.
\]
\end{lemma}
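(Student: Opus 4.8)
The plan is to exploit the zero horizontal average of $f-\overline f$ together with the one-dimensional Poincaré inequality on the circle $\mathbb{T}$, applied slicewise in $x_2$, and then upgrade the resulting $L^2$-type control to an $L^\infty$ bound in the vertical direction via a one-dimensional Sobolev embedding $H^1(0,1)\hookrightarrow L^\infty(0,1)$. Concretely, for each fixed $x_2\in(0,1)$ the function $x_1\mapsto (f-\overline f)(x_1,x_2)$ has mean zero on $\mathbb{T}$, so Poincaré gives
\[
\int_{\mathbb{T}}|f(x_1,x_2)-\overline f(x_2)|^2\,dx_1 \le_C \int_{\mathbb{T}}|\partial_1 f(x_1,x_2)|^2\,dx_1 .
\]
Integrating in $x_2$ yields $\rVert f-\overline f\rVert_{L^2(\Omega)}\le_C \rVert \partial_1 f\rVert_{L^2(\Omega)}$, and the same computation applied to $\partial_2(f-\overline f)=\partial_2 f-\overline{\partial_2 f}$ gives $\rVert \partial_2(f-\overline f)\rVert_{L^2(\Omega)}\le_C\rVert \partial_1\partial_2 f\rVert_{L^2(\Omega)}$.

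Next I would control the $L^\infty$ norm. Since $f-\overline f$ has zero average in $x_1$, I can also write, for each $x_2$, a pointwise bound: by the fundamental theorem of calculus along $\mathbb{T}$ (or directly from the mean-zero property and Cauchy–Schwarz),
\[
\sup_{x_1\in\mathbb{T}}|f(x_1,x_2)-\overline f(x_2)| \le_C \Big(\int_{\mathbb{T}}|\partial_1 f(x_1,x_2)|^2\,dx_1\Big)^{1/2}.
\]
Call $g(x_2)$ the right-hand side; then $\rVert f-\overline f\rVert_{L^\infty(\Omega)}\le_C \rVert g\rVert_{L^\infty(0,1)}$. Now apply the one-dimensional Sobolev embedding $H^1(0,1)\hookrightarrow L^\infty(0,1)$ to $g$: $\rVert g\rVert_{L^\infty}\le_C\rVert g\rVert_{H^1(0,1)}$, and $\rVert g\rVert_{H^1(0,1)}^2\le_C \int_0^1 g(x_2)^2\,dx_2 + \int_0^1 g'(x_2)^2\,dx_2$. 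The first term is $\rVert\partial_1 f\rVert_{L^2(\Omega)}^2$; for the second, differentiating $g^2(x_2)=\int_{\mathbb{T}}|\partial_1 f|^2\,dx_1$ gives $2g g' = \int_{\mathbb{T}}2\partial_1 f\,\partial_2\partial_1 f\,dx_1$, so $|g'(x_2)|\le_C (\int_{\mathbb{T}}|\partial_1\partial_2 f|^2\,dx_1)^{1/2}$ by Cauchy–Schwarz, whence $\int_0^1 g'^2\le_C\rVert\partial_1\partial_2 f\rVert_{L^2(\Omega)}^2$. Collecting these bounds gives $\rVert f-\overline f\rVert_{L^\infty(\Omega)}\le_C (\rVert\partial_1 f\rVert_{L^2(\Omega)} + \rVert\partial_1\partial_2 f\rVert_{L^2(\Omega)})\le_C\rVert\partial_1 f\rVert_{H^1(\Omega)}$, which is the claim.

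The only mild subtlety — and the place to be slightly careful — is the slicewise application of one-dimensional inequalities: one should note $f\in H^1(\Omega)$ implies that for a.e.\ $x_2$ the slice $f(\cdot,x_2)$ lies in $H^1(\mathbb{T})$ with the slice norms square-integrable in $x_2$ (Fubini for Sobolev functions), and that $\overline f\in H^1(0,1)$ with $\partial_2\overline f=\overline{\partial_2 f}$; these are standard but worth a sentence. Everything else is routine. I expect no real obstacle here; the argument is a clean combination of Poincaré on $\mathbb{T}$ and the $H^1\hookrightarrow L^\infty$ embedding in one dimension, and the regularity hypothesis $f\in H^1(\Omega)$ is exactly enough — note that $\partial_1\partial_2 f\in L^2$ is controlled by $\rVert\partial_1 f\rVert_{H^1}$, so the right-hand side in the statement indeed dominates all the quantities that appear.
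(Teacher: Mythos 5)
Your proof is correct and rests on the same two ingredients as the paper's: the horizontal mean-zero structure lets you bound the oscillation in $x_1$ slicewise by $\partial_1 f$, and the one-dimensional embedding $H^1([0,1])\hookrightarrow L^\infty([0,1])$ in the vertical variable supplies the sup in $x_2$ at the cost of $\partial_1 f,\ \partial_{12}f\in L^2$. The only real difference is bookkeeping: the paper applies the vertical embedding directly to the scalar functions $x_2\mapsto\int_{x_1}^{z}\partial_1 f(a,x_2)\,da$ for fixed $x_1,z$, whereas you apply it to $g(x_2)=\bigl(\int_{\mathbb{T}}|\partial_1 f(x_1,x_2)|^2\,dx_1\bigr)^{1/2}$; your route then needs the small standard fact that this slice norm is weakly differentiable with $|g'(x_2)|\le\bigl(\int_{\mathbb{T}}|\partial_{12}f(x_1,x_2)|^2\,dx_1\bigr)^{1/2}$ a.e.\ (your division by $g$ is formal where $g$ vanishes; justify it, e.g., by differentiating $\sqrt{g^2+\epsilon}$ or by viewing $x_2\mapsto\partial_1 f(\cdot,x_2)$ as an $L^2(\mathbb{T})$-valued $H^1$ map), a technicality the paper's formulation sidesteps. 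This is cosmetic, not a gap; also, your initial slicewise Poincar\'e estimate in $L^2$ is not actually needed for the conclusion.
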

\begin{proof}
 We notice the following pointwise estimate:
 \begin{align}\label{estimate8}
|(f-\overline{f})(x)|^2 = \left(\frac{1}{2\pi}\int_{\mathbb{T}}f(x_1,x_2)-f(z,x_2)dz\right)^2\le C\int_{\mathbb{T}}(f(x_1,x_2)-f(z,x_2))^2dz.
 \end{align}
The integrand in the right-hand side can be written as $
f(x_1,x_2)-f(z,x_2) = \int_{x_1}^{z}\partial_{1}f(a,x_2)da.$
For each fixed $x_1$, we apply the Sobolev embedding $H^1([0,1])\hookrightarrow L^\infty([0,1])$ to the map $x_2\mapsto \int_{x_1}^{z}\partial_{1}f(a,x_2)da$, yielding that 
 \[
 \sup_{x_2\in[0,1]}\left|\int_{x_1}^{z}\partial_1f(a,x_2)da\right|^2\le C\int_{0}^{1}\int_{x_1}^{z}|\partial_{12}f(a,y)|^2 + |\partial_{1}f(a,y)|^2dady\le \rVert \partial_1f\rVert_{H^1}^2.
 \] Therefore, taking the supremum over $x\in \Omega$ in \eqref{estimate8}, the desired result follows.
\end{proof}

\begin{proposition}\label{energy_IPM_estimate}
Let $\theta_0\in C^\infty_0(\Omega)$ and $\theta(t)\in C(0,T; X^\infty(\Omega))$ be the unique smooth solution to \eqref{IPMperturbed} for some $T>0$. For $k\ge 3$, it holds that
\begin{align*}
\frac{d}{dt}\left( \rVert \partial_1^k \theta\rVert_{L^2}^2 +  \rVert \partial_2^k \theta\rVert_{L^2}^2 \right)&\le -C_k(1-C_k\rVert \theta\rVert_{H^k})\rVert \nabla \Psi\rVert_{{H}^k}^2 + C_k\rVert u_2\rVert_{W^{1,\infty}}\rVert \theta\rVert_{H^k}^2.
\end{align*}
\end{proposition}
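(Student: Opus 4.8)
The plan is to apply $\partial_1^k$ and $\partial_2^k$ to the perturbation equation in \eqref{IPMperturbed}, test with $\partial_1^k\theta$ and $\partial_2^k\theta$, and add. Since $\nabla\cdot u=0$ and $u\cdot\vec{n}=0$ on $\partial\Omega$, the top-order transport terms $\int u\cdot\nabla(\partial_i^k\theta)\,\partial_i^k\theta\,dx$ vanish, so only the forcing $u_2$ and the commutators $[\partial_i^k,u\cdot\nabla]\theta$ survive. For the forcing I use $u_2=\partial_1\Psi$, integrate by parts in $x_1$, invoke the Biot--Savart identity $\Delta\Psi=-\partial_1\theta$, and apply Green's formula:
\[
\int\partial_1^k u_2\,\partial_1^k\theta\,dx=-\int\partial_1^k\Psi\,\partial_1^{k+1}\theta\,dx=\int\partial_1^k\Psi\,\Delta\partial_1^k\Psi\,dx=-\rVert\nabla\partial_1^k\Psi\rVert_{L^2}^2,
\]
the boundary term vanishing because $\partial_1^k\Psi=0$ on $\partial\Omega$; the same manipulation with $\partial_2^k$ gives $-\rVert\nabla\partial_2^k\Psi\rVert_{L^2}^2$, where now the boundary term $\int_{\partial\Omega}\partial_2^k\Psi\,\partial_n\partial_2^k\Psi$ vanishes by a parity argument ($\Psi\in X^\infty$ forces $\partial_2^k\Psi|_{\partial\Omega}=0$ when $k$ is even and $\partial_2^{k+1}\Psi|_{\partial\Omega}=0$ when $k$ is odd). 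Hence the identity reads $\tfrac12\tfrac{d}{dt}(\rVert\partial_1^k\theta\rVert_{L^2}^2+\rVert\partial_2^k\theta\rVert_{L^2}^2)=-\rVert\nabla\partial_1^k\Psi\rVert_{L^2}^2-\rVert\nabla\partial_2^k\Psi\rVert_{L^2}^2-\sum_{i=1,2}\int[\partial_i^k,u\cdot\nabla]\theta\,\partial_i^k\theta\,dx$.

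Next I would upgrade the coercive term to $\rVert\nabla\Psi\rVert_{H^k}^2$: by standard elliptic regularity for the Dirichlet Laplacian on the smooth domain $\Omega$ one has $\rVert\nabla\Psi\rVert_{H^k}\le_C\rVert\Delta\Psi\rVert_{H^{k-1}}=\rVert\partial_1\theta\rVert_{H^{k-1}}$, and since $\partial_1\theta\in X^\infty$, Lemma~\ref{onedirection} bounds this by $\rVert\Delta\partial_1^{k-1}\Psi\rVert_{L^2}+\rVert\Delta\partial_2^{k-1}\Psi\rVert_{L^2}$; expanding the Laplacians and applying Lemma~\ref{onedirection} once more to $\Psi\in X^\infty$ shows each is $\le_C\rVert\nabla\partial_1^k\Psi\rVert_{L^2}+\rVert\nabla\partial_2^k\Psi\rVert_{L^2}$, so $\rVert\nabla\partial_1^k\Psi\rVert_{L^2}^2+\rVert\nabla\partial_2^k\Psi\rVert_{L^2}^2\ge C_k\rVert\nabla\Psi\rVert_{H^k}^2$. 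For the commutators I would expand by the Leibniz rule and estimate each piece with the tame/commutator inequalities of Lemma~\ref{Simple_tame}, guided by two structural facts. First, two-dimensional incompressibility gives $\partial_1 u_1=-\partial_2 u_2$ and, via $\Delta\Psi=-\partial_1\theta$, $\partial_2 u_1=\partial_1 u_2+\partial_1\theta$; hence whenever a low-order $x_1$-derivative of $u_1$ must be placed in $L^\infty$ it costs only $\rVert u_2\rVert_{W^{1,\infty}}$ plus a term controllable by $\rVert\nabla\Psi\rVert_{H^k}$ — this is the mechanism producing $\rVert u_2\rVert_{W^{1,\infty}}$ rather than $\rVert u\rVert_{W^{1,\infty}}$. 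Second, any $\theta$-factor carrying at least one horizontal derivative is, by $\partial_1\theta=-\Delta\Psi$, a derivative of $\Psi$, hence contributes a factor $\le_C\rVert\nabla\Psi\rVert_{H^k}$ instead of $\rVert\theta\rVert_{H^k}$; combined with $\rVert u\rVert_{H^k}=\rVert\nabla\Psi\rVert_{H^k}$ and $\rVert\theta\rVert_{H^k}\le_C\epsilon$, the bulk of the commutator terms are bounded by $C_k\rVert u_2\rVert_{W^{1,\infty}}\rVert\theta\rVert_{H^k}^2+C_k\rVert\theta\rVert_{H^k}\rVert\nabla\Psi\rVert_{H^k}^2$.

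The step I expect to be the main obstacle is the commutator terms in which a $\theta$-factor and the test function are \emph{both} high-order pure vertical derivatives $\partial_2^m\theta$ (which cannot be traded for $\Psi$), the representative being $\int\partial_2^k u_2\,\partial_2\theta\,\partial_2^k\theta\,dx$. A direct H\"older estimate gives only $\rVert\nabla\Psi\rVert_{H^k}\rVert\theta\rVert_{H^k}^2$, which is cubic in $\theta$ (as $\rVert\nabla\Psi\rVert_{H^k}\le_C\rVert\theta\rVert_{H^k}$) and not of an admissible form. Instead, I would integrate by parts in $x_1$ to transfer the derivative in $u_2=\partial_1\Psi$ onto the test function, turning $\partial_1\partial_2^k\theta$ into $-\Delta\partial_2^k\Psi$; a further integration by parts (boundary terms again vanishing by parity) produces $-\int|\nabla\partial_2^k\Psi|^2\,\partial_2\theta\,dx$ together with two lower-order remainders, all of which are bounded by $\rVert\partial_2\theta\rVert_{L^\infty}\rVert\nabla\Psi\rVert_{H^k}^2\le_C\rVert\theta\rVert_{H^k}\rVert\nabla\Psi\rVert_{H^k}^2$ once $k\ge3$ (so that $\partial_2\theta\in L^\infty$ and the remainders, containing a $\partial_1$-derivative of $\theta$ of order $\le2$, sit in $L^4$). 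The same device disposes of the remaining pure-vertical commutator terms. Assembling the coercivity bound with all commutator bounds and absorbing the $\rVert\theta\rVert_{H^k}\rVert\nabla\Psi\rVert_{H^k}^2$ contributions into the coercive term yields $\tfrac{d}{dt}(\rVert\partial_1^k\theta\rVert_{L^2}^2+\rVert\partial_2^k\theta\rVert_{L^2}^2)\le-C_k(1-C_k\rVert\theta\rVert_{H^k})\rVert\nabla\Psi\rVert_{H^k}^2+C_k\rVert u_2\rVert_{W^{1,\infty}}\rVert\theta\rVert_{H^k}^2$, which is the claimed estimate.
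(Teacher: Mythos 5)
Your proposal is correct, and its skeleton (energy identity, coercive linear term via $u_2=\partial_1\Psi$, $\Delta\Psi=-\partial_1\theta$ and the $X^\infty$ parity/boundary conditions, nonlinear term reduced to a commutator, horizontal derivatives of $\theta$ traded for $\Psi$ so that $\|\partial_1^k\theta\|_{L^2},\|\partial_1\partial_2^{k-1}\theta\|_{L^2}\le C\|\nabla\Psi\|_{H^k}$) coincides with the paper's. Where you genuinely diverge is the treatment of the anisotropic terms $\int\partial_2^j u_2\,\partial_2^{k-j+1}\theta\,\partial_2^k\theta\,dx$, $j\ge2$: the paper first splits $\theta=(\theta-\overline{\theta})+\overline{\theta}$, handles the deviation with the tame estimate plus Lemma~\ref{average_difference} (which converts $\|\partial_2(\theta-\overline{\theta})\|_{L^\infty}$ into $\|\partial_{12}\theta\|_{H^1}\le C\|\nabla\Psi\|_{H^k}$), and for the $x_1$-independent part $\overline{\theta}$ expands by the product rule, writes $u_2=\partial_1\Psi$, and moves $\partial_1$ and then one $\partial_2$ by parts (boundary term killed by the parity of $j$, $k-j+1$, $k-1$), landing on $\|\partial_1\partial_2^{k-1}\theta\|_{L^2}\le\|\nabla\Psi\|_{H^k}$. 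You instead skip the average decomposition entirely: integrate by parts in $x_1$ on the full term, convert $\partial_1\partial_2^k\theta$ into $-\Delta\partial_2^k\Psi$, and apply Green's formula (boundary term killed by an analogous parity argument, valid since $j+(k-j+1)+(k+1)$ is even), whose leading output $-\int\partial_2\theta\,|\nabla\partial_2^k\Psi|^2$ and remainders are all bounded by $C\|\theta\|_{H^k}\|\nabla\Psi\|_{H^k}^2$ via $L^4$--$L^4$--$L^2$ H\"older (the only care needed is $j=k$, where $\partial_2^k\Psi$ lives in $L^4$ rather than $L^\infty$, exactly as you indicate). Your route buys a more unified argument that dispenses with Lemma~\ref{average_difference} and the $\overline{\theta}$ splitting, at the price of one more Green-type integration by parts with its parity check; the paper's route keeps everything at the level of tame estimates plus one-dimensional sup bounds on $\overline{\theta}$. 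Both yield the same admissible bound $C(\|u_2\|_{W^{1,\infty}}\|\theta\|_{H^k}^2+\|\theta\|_{H^k}\|\nabla\Psi\|_{H^k}^2)$ and hence the stated inequality; your aside about using $\partial_2u_1=\partial_1u_2+\partial_1\theta$ is correct but not actually needed, since (as in the paper) the $u_1$-terms always come paired with a horizontally differentiated $\theta$-factor or the $i=1$ test function, so $\|u\|_{W^{1,\infty}}\le C\|\nabla\Psi\|_{H^k}$ already suffices there.
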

\begin{proof} In the proof, the implicit constant $C$ may depend on $k$, but its dependence will be omitted for simplicity.
In what follows  $\partial_i$ will denote either $\partial_1$ or $\partial_2$.
Using \eqref{IPMperturbed}, we compute
\begin{align}\label{energe1}
\frac{1}2\frac{d}{dt}\rVert \partial_i^k \theta\rVert_{L^2}^2 = -\int \partial_i^k(u\cdot\nabla \theta) \partial_i^k\theta dx +\int \partial_i^ku_2\partial_i^k\theta dx
\end{align}
We simplify the linear term first. Recalling from \eqref{IPMperturbed} that $u_2=\partial_1\Psi$ and $\partial_1\theta=-\Delta \Psi$, we have
\begin{align*}
\int \partial_i^ku_2\partial_i^k\theta dx&=\int \partial_1\partial_i^k\Psi \partial_i^k\theta dx = -\int \partial_i^k\Psi \partial_i^k\partial_1\theta dx = \int \partial_i^k\Psi \partial_i^k\Delta \Psi dx\nonumber\\
& = \int_{\partial\Omega}\partial_i^k\Psi \nabla(\partial_i^k \Psi)\cdot\vec{n}(x)d\sigma(x) - \int_{\Omega}|\nabla \partial_i^k\Psi|^2dx= -  \int_{\Omega}|\nabla \partial_i^k\Psi|^2dx,
\end{align*}
where the last equality follows from \eqref{Xprop}, which ensures that the integral over $\partial\Omega$  vanishes. Since $\theta\in X^\infty(\Omega)$, it follows from Lemma~\ref{lemma_psi} and Lemma~\ref{onedirection} that 
$\sum_{i=1,2}\int_{\Omega}|\nabla \partial_i^k\Psi|^2dx\ge_C \rVert\nabla \Psi\rVert_{\dot{H}^k}^2$. Since $\Psi=0$ on $\partial\Omega$,  the Poincar\'e inequality gives us $\rVert\nabla \Psi\rVert_{\dot{H}^k}^2\ge_C \rVert \nabla\Psi\rVert_{H^k}$, consequently, 
\begin{align}\label{dissipative_term}
\sum_{i=1,2}\int \partial_i^ku_2\partial_i^k\theta dx \le_C -\rVert\nabla \Psi\rVert_{{H}^k}^2.
\end{align}
Now, we move on to the nonlinear term. We write
\begin{align}\label{nerge2}
\int \partial_i^k(u\cdot\nabla \theta) \partial_i^k\theta dx &=\int \left(\partial_i^k(u\cdot\nabla \theta)-u\nabla \partial_i^k\theta\right) \partial_i^k\theta dx + \int u\cdot\nabla \partial_i^k \theta \partial_i^k\theta dx \nonumber\\
& = \int \left(\partial_i^k(u\cdot\nabla \theta)-u\nabla \partial_i^k\theta\right) \partial_i^k\theta dx + \int u\cdot \nabla \left(\frac{1}{2}(\partial_i^k\theta)^2\right)dx\nonumber\\
& = \int \left(\partial_i^k(u\cdot\nabla \theta)-u\nabla \partial_i^k\theta\right) \partial_i^k\theta dx,
\end{align}
where the last equality follows from the integration by parts and $u\cdot \vec{n}=0$ on the boundary. We claim that
\begin{align}\label{nontrivial_estimate}
\left|\int \left(\partial_i^k(u\cdot\nabla \theta)-u\nabla \partial_i^k\theta\right) \partial_i^k\theta dx\right|\le C\left( \rVert u_2\rVert_{W^{1,\infty}}\rVert \theta\rVert_{H^k}^2 +\rVert \nabla \Psi\rVert_{{H}^k}^2\rVert \theta\rVert_{H^k}  \right),
\end{align}
either $i=1$ or $i=2$. Once we have the above claim,  combining it  with \eqref{dissipative_term} yields the desired energy estimate. Thus, in the rest of the proof, we will aim to prove the claim \eqref{nontrivial_estimate}. We consider two cases, $i=1$ and $i=2$, separately.

\textbf{Case $i=1$.}
The Cauchy-Schwarz inequality gives us
\begin{align}\label{estimate_1}
\left|\int \left(\partial_1^k(u\cdot\nabla \theta)-u\nabla \partial_1^k\theta\right) \partial_1^k\theta dx\right|\le_C  \rVert \partial_1^k(u\cdot\nabla \theta)-u\nabla \partial_1^k\theta\rVert_{L^2}\rVert \partial_1^k\theta\rVert_{L^2},
\end{align}
while Lemma~\ref{Simple_tame} tells us that
\[
 \rVert \partial_1^k(u\cdot\nabla \theta)-u\nabla \partial_1^k\theta\rVert_{L^2}\le C\left(\rVert u\rVert_{W^{1,\infty}}\rVert \theta\rVert_{H^k}+\rVert u\rVert_{H^k}\rVert\nabla\theta\rVert_{L^\infty}\right).
\]
Since $k\ge 3$, the Sobolev inequality  gives us 
\begin{align}\label{estimate3}
\rVert u\rVert_{W^{1,\infty}}=\rVert\nabla\Psi\rVert_{W^{1,\infty}}\le C\rVert \nabla\Psi\rVert_{H^k},\text{ and }\rVert \nabla\theta\rVert_{L^\infty}\le C\rVert \theta\rVert_{H^k}.
\end{align}
Hence we have $ \rVert \partial_1^k(u\cdot\nabla \theta)-u\nabla \partial_1^k\theta\rVert_{L^2}\le C\rVert\nabla\Psi\rVert_{H^k}\rVert \theta\rVert_{H^k}$. Plugging this into \eqref{estimate_1}, we obtain
\[
\left|\int \left(\partial_1^k(u\cdot\nabla \theta)-u\nabla \partial_1^k\theta\right) \partial_1^k\theta dx\right|\le C \rVert\nabla\Psi\rVert_{H^k}\rVert\partial_1^k\theta\rVert_{L^2}\rVert \theta\rVert_{H^k}.
\]
Furthermore, from the Poisson equation in \eqref{IPMperturbed}, we find that
\[
\rVert \partial_1^k\theta\rVert_{L^2} = \rVert \partial_1^{k-1}\Delta\Psi\rVert_{L^2}\le C\rVert \nabla \Psi\rVert_{H^{k}},
\]
therefore, we conclude
\begin{align}\label{estimate_2}
\left|\int \left(\partial_1^k(u\cdot\nabla \theta)-u\nabla \partial_1^k\theta\right) \partial_1^k\theta dx\right|\le C \rVert\nabla\Psi\rVert_{H^k}^2\rVert \theta\rVert_{H^k},
\end{align}
which verifies \eqref{nontrivial_estimate}.

\textbf{Case $i=2$.}
Splitting $u\cdot\nabla=u_1\partial_1+u_2\partial_2$, we have
\begin{align}\label{I1I2}
 \int \left(\partial_2^k(u\cdot\nabla \theta)-u\nabla \partial_2^k\theta\right) \partial_2^k\theta dx&= \int \left(\partial_2^k(u_1\partial_1 \theta)-u_1 \partial_2^k\partial_1\theta\right) \partial_2^k\theta dx +  \int \left(\partial_2^k(u_2 \partial_2\theta)-u_2 \partial_2^k\partial_2\theta\right) \partial_2^k\theta dx\nonumber\\
 &=:I_1+I_2.
\end{align}
 The first integral $I_1$ can be estimated as before; applying the Cauchy-Schwarz inequality, we get
 \begin{align}\label{esimate4}
 |I_1|\le C\rVert \partial_2^k(u_1\partial_1\theta)-u_1\partial_2^k\partial_1\theta\rVert_{L^2}\rVert \partial_2^k\theta\rVert_{L^2},
 \end{align}
 while Lemma~\ref{Simple_tame} gives us 
  \begin{align*}
 \rVert \partial_2^k(u_1\partial_1\theta)-u_1\partial_2^k\partial_1\theta\rVert_{L^2}&\le C\left(\rVert u_1\rVert_{W^{1,\infty}}\rVert\partial_1\theta\rVert_{H^{k-1}} +\rVert u_1\rVert_{H^k}\rVert \partial_1\theta\rVert_{L^\infty}\right)\\
 &\le C\rVert \nabla \Psi\rVert_{H^k}\left(\rVert \partial_1\theta\rVert_{H^{k-1}}+\rVert \partial_1\theta\rVert_{L^\infty}\right),
  \end{align*}
  where we used the estimate for $u$ in \eqref{estimate3} to get the second inequality. Again, using the Sobolev inequality and the Poisson equation in \eqref{IPMperturbed}, we estimate
  \[
 \rVert \partial_1\theta\rVert_{L^\infty}\le_C \rVert \partial_1\theta\rVert_{H^{k-1}}=\rVert \Delta \Psi\rVert_{H^{k-1}} \le \rVert \nabla \Psi\rVert_{H^{k}},
  \]
which gives us $ \rVert \partial_2^k(u_1\partial_1\theta)-u_1\partial_2^k\partial_1\theta\rVert_{L^2}\le \rVert \nabla \Psi\rVert_{H^{k}}^2$.
Plugging this into \eqref{esimate4}, we conclude
\begin{align}\label{estimate5}
|I_1|\le C\rVert \nabla\Psi\rVert_{H^k}^2\rVert \partial_2^k\theta\rVert_{L^2}\le C\rVert \nabla\Psi\rVert_{H^k}^2\rVert \theta\rVert_{H^k}.
\end{align}

Now, let us estimate $I_2$ in \eqref{I1I2}. We denote
\begin{align}\label{thetabar}
\overline{\theta}(x_2):=\frac{1}{2\pi}\int_{\mathbb{T}}\theta(x_1,x_2)dx_1.
\end{align}
We split $I_2$ as 
\begin{align}\label{I22}
I_2 = \int \left(\partial_2^k(u_2 \partial_2(\theta-\overline{\theta}))-u_2 \partial_2^k\partial_2(\theta-\overline{\theta})\right) \partial_2^k\theta dx + \int \left(\partial_2^k(u_2 \partial_2\overline{\theta})-u_2 \partial_2^k\partial_2\overline{\theta}\right) \partial_2^k\theta dx=: I_{21} + I_{22}
\end{align}
 We estimate $I_{21}$ first. In a similar manner as above, the Cauchy-Schwarz inequality and the Sobolev inequalities yield
 \begin{align}\label{estimate9}
 |I_{21}|&\le_C\left(\rVert u_2\rVert_{W^{1,\infty}}\rVert\partial_2(\theta-\overline{\theta})\rVert_{H^{k-1}} +\rVert u_2\rVert_{H^k}\rVert \partial_2(\theta-\overline{\theta})\rVert_{L^\infty}\right)\rVert \theta\rVert_{H^k}\nonumber\\
 &\le_C\left(\rVert u_2\rVert_{W^{1,\infty}}\rVert(\theta-\overline{\theta})\rVert_{H^{k}} +\rVert u_2\rVert_{H^k}\rVert \partial_2(\theta-\overline{\theta})\rVert_{L^\infty}\right)\rVert \theta\rVert_{H^k}\nonumber\\
 &\le_C\rVert u_2\rVert_{W^{1,\infty}}\rVert \theta\rVert_{H^k}^2 + \rVert u_2\rVert_{H^k}\rVert \partial_2(\theta-\overline{\theta})\rVert_{L^\infty}\rVert \theta\rVert_{H^k}.
 \end{align}
 Moreover, Lemma~\ref{average_difference} implies
 \[
\rVert \partial_2(\theta-\overline{\theta})\rVert_{L^\infty}\le_C \rVert \partial_{12}\theta\rVert_{H^1}\le_C \rVert \partial_2\Delta\Psi\rVert_{L^2}\le_C \rVert \nabla\Psi\rVert_{H^k}, \]
 where the last inequality follows from $k\ge 3$. Plugging this and $\rVert u\rVert_{H^k}\le \rVert\nabla \Psi\rVert_{H^k}$  into \eqref{estimate9}, we obtain
 \begin{align}\label{estimate10}
 |I_{21}|\le C\left( \rVert u_2\rVert_{W^{1,\infty}}\rVert \theta\rVert_{H^k}^2  + \rVert\nabla \Psi\rVert_{H^k}^2\rVert\theta\rVert_{H^k}\right).
 \end{align}

 Next, we estimate $I_{22}$ in \eqref{I22}. By expanding $I_{22}$ using the product rule, we have
 \begin{align}\label{I222}
 I_{22}=\sum_{j=1}^{k}C_{k,j}\int \partial_{2}^ju_2\partial_2^{k-j+1}\overline{\theta}\partial_2^k\theta dx.
 \end{align}
 When $j=1$, we have
 \begin{align}\label{estimate11}
 \int \partial_{2}u_2\partial_2^{k}\overline{\theta}\partial_2^k\theta dx\le_C \rVert \partial_2u_2\rVert_{L^\infty}\rVert \overline{\theta}\rVert_{H^k}\rVert\theta\rVert_{H^k}\le_C \rVert \partial_2u_2\rVert_{L^\infty}\rVert \theta\rVert_{H^k}^2.
 \end{align}
  For $j\ge 2$, noting that $u_2=\partial_1\Psi$, we can apply the integration by parts in each integral as
 \begin{align*}
 \int \partial_{2}^ju_2\partial_2^{k-j+1}\overline{\theta}\partial_2^k\theta dx& = -\int \partial_2^j\Psi \partial_2^{k-j+1}\overline{\theta} dx\partial_1\partial_2^k\theta dx\\
 &=-\int_{\partial\Omega}\partial_2^j\Psi\partial_2^{k-j+1}\overline{\theta}\partial_1\partial_2^{k-1}\theta d\sigma(x) +\int_{\Omega}\partial_2\left(\partial_2^j\Psi\partial_2^{k-j+1}\overline{\theta} \right)\partial_1\partial_2^{k-1}\theta dx\\
 & = \int_{\Omega}\partial_2\left(\partial_2^j\Psi\partial_2^{k-j+1}\overline{\theta} \right)\partial_1\partial_2^{k-1}\theta dx.
 \end{align*}
 To see the last inequality, note that by Lemma~\ref{lemma_psi}, we have that $\Psi,\overline{\theta},\partial_1\theta$ are all in $X^\infty$. Since at least one of $j, k-j+1, k-1$ must be even, the definition of the space $X^k(\Omega)$ in \eqref{spacex} tells us that the boundary integral  must vanish.

 To continue,  we apply the Cauchy-Schwarz inequality to get 
 \begin{align}\label{estimate13}
\left| \int \partial_{2}^ju_2\partial_2^{k-j+1}\overline{\theta}\partial_2^k\theta dx\right| &=\left| \int_{\Omega}\partial_2\left(\partial_2^j\Psi\partial_2^{k-j+1}\overline{\theta} \right)\partial_1\partial_2^{k-1}\theta dx\right|\nonumber\\
&\le \left(\rVert \partial_{2}^{j+1}\Psi\partial_2^{k-j+1}\overline{\theta}\rVert_{L^2}+\rVert\partial_2^j\Psi\partial_2^{k-j+2}\overline{\theta}\rVert_{L^2}\right)\rVert\partial_1\partial_2^{k-1}\theta\rVert_{L^2}\nonumber\\
&\le \left(\rVert \partial_{2}^{j+1}\Psi\partial_2^{k-j+1}\overline{\theta}\rVert_{L^2}+\rVert\partial_2^j\Psi\partial_2^{k-j+2}\overline{\theta}\rVert_{L^2}\right) \rVert \nabla\Psi\rVert_{H^k}, \text{ for $j\ge 2$},
 \end{align}
 where we used $\rVert\partial_1\partial_2^{k-1}\theta\rVert_{L^2} =\rVert \partial_{2}^{k-1}\Delta\Psi\rVert_{L^2}\le\rVert\nabla\Psi\rVert_{H^k}$ to get the last inequality.

 Let us estimate $\rVert \partial_{2}^{j+1}\Psi\partial_2^{k-j+1}\overline{\theta}\rVert_{L^2}$ in \eqref{estimate13}. Since $2\le j\le k$, we have
 \begin{align}\label{estimate12}
 \rVert \partial_{2}^{j+1}\Psi\partial_2^{k-j+1}\overline{\theta}\rVert_{L^2}\le \rVert\nabla \Psi\rVert_{H^{k}}\rVert \partial_2^{k-j+1}\overline{\theta}\rVert_{L^\infty}\le  \rVert\nabla \Psi\rVert_{H^{k}(\Omega)}\rVert \overline{\theta}\rVert_{H^{k-j+2}}\le  \rVert\nabla \Psi\rVert_{H^{k}}\rVert \theta\rVert_{H^{k}},
 \end{align}
 where the second last inequality follows from the Sobolev inequality, noticing that  $\overline{\theta}$ depends only on the variable $x_2$.  
 
  Let us estimate $\rVert\partial_2^j\Psi\partial_2^{k-j+2}\overline{\theta}\rVert_{L^2}$ in \eqref{estimate13}. When $j=k$, we have
  \[
  \rVert\partial_2^j\Psi\partial_2^{k-j+2}\overline{\theta}\rVert_{L^2}\le_C \rVert \Psi\rVert_{H^k} \rVert\partial_{22}\overline{\theta}\rVert_{L^\infty}\le_C \rVert \nabla\Psi\rVert_{H^k}\rVert \theta\rVert_{H^k}.
  \]
 When $j\le k-1$, we have
  \[
  \rVert\partial_2^j\Psi\partial_2^{k-j+2}\overline{\theta}\rVert_{L^2}\le \rVert \partial_2^j\Psi\rVert_{L^\infty}\rVert \partial_2^{k-j+2}\theta\rVert_{L^2}\le \rVert \nabla \Psi\rVert_{H^k}\rVert \theta\rVert_{H^k}.
  \]
 where in the the last inequality, we used the Sobolev inequality and $j\ge 2$. Thus, we obtain 
 \[
 \rVert\partial_2^j\Psi\partial_2^{k-j+2}\overline{\theta}\rVert_{L^2}\le \rVert \nabla \Psi\rVert_{H^k}\rVert \theta\rVert_{H^k},\text{ for all $j\ge 2$.}
 \] Plugging this and \eqref{estimate12}  into \eqref{estimate13}, we get
 \[
 \left| \int \partial_{2}^ju_2\partial_2^{k-j+1}\overline{\theta}\partial_2^k\theta dx\right|\le C\rVert \nabla \Psi\rVert_{H^k}^2\rVert \theta\rVert_{H^k} \text{ for $j\ge 2$.}
 \]
Combining this with  \eqref{estimate11} and plugging them into \eqref{I222}, we see that
\[
|I_{22}|\le_C \rVert \partial_2u_2\rVert_{L^\infty}\rVert \theta\rVert_{H^k}^2 +\rVert \nabla \Psi\rVert_{H^k}^2\rVert \theta\rVert_{H^k}.
\]
Plugging this and \eqref{estimate10} into \eqref{I22}, we get
\[
|I_2|\le_C\rVert u_2\rVert_{W^{1,\infty}}\rVert \theta\rVert_{H^k}^2 +\rVert \nabla \Psi\rVert_{H^k}^2\rVert \theta\rVert_{H^k}.
\]
Plugging this and \eqref{estimate5} into \eqref{I1I2}, we conclude
\[
\left| \int \left(\partial_2^k(u\cdot\nabla \theta)-u\nabla \partial_2^k\theta\right) \partial_2^k\theta dx\right|\le_C \rVert u_2\rVert_{W^{1,\infty}}\rVert \theta\rVert_{H^k}^2 +\rVert \nabla \Psi\rVert_{H^k}^2\rVert \theta\rVert_{H^k}.
\]
Combining this with \eqref{estimate_2}, we obtain \eqref{nontrivial_estimate}.
\end{proof}

\subsection{Analysis of the energy structure}
The main objective in this subsection is to derive sufficient convergence rate of $\rho(t)$  to the equilibrium $\rho_0^*$, while $\rho$ stays close to ${\rho}_s$. We emphasize that $\theta(t)=\rho(t)-\rho_s$ will stay small but not necessarily decay. Thus it is important to distinguish the roles of   $\theta(t)$ and $\rho(t)-\rho_0^*$. We will consider the potential energy and the kinetic energy defined as
 \begin{align}\label{IPM_potential}
 E(t):= \int_{\Omega}(\rho(t)-\rho_0^*)x_2dx,\quad K(t):=\rVert u(t)\rVert_{L^2}^2.
 \end{align}
 Since we are interested in a solution that is close to $\rho_s$, we will assume, throughout this subsection, that
\begin{align}\label{assumption_delta}
 \rVert \theta(t)\rVert_{H^3}^2+  \int_0^T \rVert \nabla \Psi(t)\rVert_{H^k}^2 dt\le \delta\le \delta_0, \text{ for $t\in[0,T]$ for some $T>0$ and $\delta_0\ll 1$.}
 \end{align}
 
\begin{proposition}\label{main_IPM_decay}
Let $k\ge 3$. There exists $\delta_0=\delta_0(k)>0$ such that if $\rho(t)$ satisfies \eqref{assumption_delta}, then 
\begin{align}
\frac{d}{dt}E(t)= -K(t),\quad \frac{d}{dt}K(t)\le -C_k \rVert u_2\rVert_{L^2}^2. \label{officeteddy3}
 \end{align}
 \end{proposition}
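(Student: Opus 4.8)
The plan is to prove the two assertions in \eqref{officeteddy3} in turn: the first identity is essentially the computation indicated in \eqref{rkosdsd22}, while the second, after differentiating the kinetic energy once, reduces to an algebraic cancellation in the cross term. For the first identity, since $\rho_0^*$ is independent of $t$ we have $\tfrac{d}{dt}E(t)=\int_\Omega \rho_t\,x_2\,dx$; writing $\rho_t=-u\cdot\nabla\rho=-\nabla\cdot(u\rho)$ (using $\nabla\cdot u=0$), integrating by parts, and using $u\cdot\vec n=0$ on $\partial\Omega$, this equals $\int_\Omega u_2\rho\,dx=\int_\Omega u\cdot(0,\rho)^T\,dx$. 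Substituting Darcy's law $u=-\nabla p-(0,\rho)^T$ and discarding $\int_\Omega u\cdot\nabla p\,dx=0$ (again by $\nabla\cdot u=0$ and $u\cdot\vec n=0$) leaves $\tfrac{d}{dt}E(t)=-\int_\Omega|u|^2\,dx=-K(t)$.

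For the second identity I would argue as follows. Since $\theta\in C(0,T;X^\infty(\Omega))$ is a smooth solution, $u$ and $u_t$ are smooth, so $\tfrac{d}{dt}K(t)=2\int_\Omega u\cdot u_t\,dx$. Differentiating Darcy's law in time gives $u_t=-\nabla p_t-(0,\rho_t)^T$, and $\int_\Omega u\cdot\nabla p_t\,dx=0$ as before, so $\tfrac{d}{dt}K(t)=-2\int_\Omega u_2\,\rho_t\,dx=2\int_\Omega u_2\,(u\cdot\nabla\rho)\,dx$. Since $\rho=\theta+\rho_s$ with $\rho_s(x_2)=1-x_2$, we have $u\cdot\nabla\rho=u\cdot\nabla\theta-u_2$, hence
\[
\frac{d}{dt}K(t)=2\int_\Omega u_2\,(u\cdot\nabla\theta)\,dx-2\|u_2\|_{L^2}^2 .
\]

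The crucial observation is that the cross term reduces to a harmless remainder. Decompose $u\cdot\nabla\theta=u_1\partial_1\theta+u_2\partial_2\theta$. For the $u_1\partial_1\theta$ piece, note that the scalar vorticity $\omega:=\partial_1u_2-\partial_2u_1$ equals $-\partial_1\rho=-\partial_1\theta$ (the two-dimensional curl of Darcy's law), so $\int_\Omega u_1u_2\,\partial_1\theta\,dx=-\int_\Omega u_1u_2\,\omega\,dx=-\int_\Omega u_1u_2\,\partial_1u_2\,dx+\int_\Omega u_1u_2\,\partial_2u_1\,dx$. After integration by parts and the incompressibility relation $\partial_1u_1=-\partial_2u_2$, each of these two integrals becomes the integral of a perfect derivative of a cube, $\tfrac16\int_\Omega\partial_2(u_2^3)\,dx$ respectively $\tfrac16\int_\Omega\partial_1(u_1^3)\,dx$, and hence vanishes — the first because $u_2=0$ on $\partial\Omega$, the second by periodicity in $x_1$. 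Therefore $\int_\Omega u_1u_2\,\partial_1\theta\,dx=0$ and
\[
\frac{d}{dt}K(t)=-2\|u_2\|_{L^2}^2+2\int_\Omega u_2^2\,\partial_2\theta\,dx .
\]

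To conclude, bound the remainder by $\big|2\int_\Omega u_2^2\,\partial_2\theta\,dx\big|\le 2\|\partial_2\theta\|_{L^\infty}\|u_2\|_{L^2}^2\le C\|\theta\|_{H^3}\|u_2\|_{L^2}^2\le C\sqrt{\delta_0}\,\|u_2\|_{L^2}^2$, using the embedding $H^3(\Omega)\hookrightarrow W^{1,\infty}(\Omega)$ and the hypothesis \eqref{assumption_delta}; choosing $\delta_0=\delta_0(k)$ with $C\sqrt{\delta_0}\le 1$ gives $\tfrac{d}{dt}K(t)\le-\|u_2\|_{L^2}^2$, which is \eqref{officeteddy3} with $C_k=1$. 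The only substantive point is the cancellation $\int_\Omega u_1u_2\,\omega\,dx=0$: without it the cross term is controlled only by $\|\nabla u\|_{L^\infty}\|u\|_{L^2}^2\sim\sqrt{\delta_0}\,K(t)$, and since $K(t)$ is not a priori dominated by $\|u_2\|_{L^2}^2$ the desired coercive bound would not close; it is precisely this identity — valid for any divergence-free field tangent to the horizontal boundary of the periodic channel — that produces the degenerate coercivity $-\tfrac{d}{dt}K(t)\gtrsim\|u_2\|_{L^2}^2$. Note that only the $H^3$-smallness of $\theta$ from \eqref{assumption_delta} is used in this step, not the integrability of $\|\nabla\Psi\|_{H^k}$.
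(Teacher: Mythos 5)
Your proposal is correct, and for the key coercivity step it takes a genuinely different route from the paper. Both proofs arrive at $\frac{d}{dt}K(t) = 2\int_\Omega u_2\, (u\cdot\nabla\rho)\,dx$ (you via differentiating Darcy's law in time, the paper via a stream-function symmetry argument applied to $\left(\frac{d}{dt}\right)^2E$; these are equivalent for the smooth $X^\infty$ solutions at hand), and both absorb the term $\int_\Omega u_2^2\,\partial_2\theta\,dx$ using $\rVert\partial_2\theta\rVert_{L^\infty}\le C\sqrt{\delta_0}$ from \eqref{assumption_delta}. The difference is in the cross term $\int_\Omega u_1u_2\,\partial_1\rho\,dx$: the paper treats it perturbatively, bounding $\rVert u_1\partial_1\rho\rVert_{L^2}=\rVert\partial_2\Psi\,\Delta\Psi\rVert_{L^2}\le\rVert\nabla\Psi\rVert_{H^3}\rVert\Psi\rVert_{L^2}$ by Gagliardo--Nirenberg and then using that $\Psi$ has zero horizontal average (so $\rVert\Psi\rVert_{L^2}\le\rVert\partial_1\Psi\rVert_{L^2}=\rVert u_2\rVert_{L^2}$ by Poincar\'e), which yields a bound $\sqrt{\delta_0}\rVert u_2\rVert_{L^2}^2$; you instead show the cross term vanishes identically, since $\partial_1u_2-\partial_2u_1=-\partial_1\rho$ (curl of Darcy's law, consistent with \eqref{IPM_stream}) and $\int_\Omega u_1u_2(\partial_1u_2-\partial_2u_1)\,dx=0$ for any divergence-free field with $u_2=0$ on $\partial\Omega$ and periodicity in $x_1$ (both pieces reduce to integrals of $\tfrac16\partial_2(u_2^3)$ and $\tfrac16\partial_1(u_1^3)$; I checked the integrations by parts and they are justified by exactly those boundary conditions). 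Your exact cancellation is cleaner and gives the coercive bound with constant essentially $2-C\sqrt{\delta_0}$ without invoking the Poincar\'e/interpolation step; on the other hand, the paper's perturbative argument is the one that transfers verbatim to the Stokes transport system (Proposition~\ref{main_Stokes_decay}), where the vorticity is no longer $-\partial_1\rho$ and your algebraic identity no longer kills the cross term, which is presumably why the paper argues as it does.
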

 
\begin{proof}
Differentiating $E(t)$, we see that
 \begin{align}\label{potential_derivative}
 \frac{d}{dt}E(t) = \int_{\Omega}\rho_t x_2dx = -\int_{\Omega}u\cdot \nabla \rho x_2dx = \int_{\Omega}u_2\rho dx.
 \end{align}
 Using \eqref{IPM_stream}, we can further simplify the last expression as
 \[
  \int_{\Omega}u_2\rho dx = \int_{\Omega}\partial_1\Psi \rho dx = -\int_{\Omega}\Psi \partial_1\rho dx = \int_{\Omega}\Psi \Delta \Psi dx = -\int_{\Omega}|\nabla \Psi|^2 dx,
 \]
 therefore, we get 
 \begin{align}\label{energy_decay_IPM}
  \frac{d}{dt}E(t) =-\int_{\Omega}|\nabla \Psi|^2 dx = -K(t).
 \end{align}
In order to estimate $\frac{d}{dt}K(t)$, we differentiate \eqref{potential_derivative} and obtain
\[
\frac{1}{2}\left(\frac{d}{dt}\right)^2 E(t) =\frac{1}{2} \frac{d}{dt}\int_{\Omega} u_2\rho dx = \frac{1}{2}\int_{\Omega} u_2 \rho_t dx + \frac{1}{2}\int_{\Omega}\partial_tu_2 \rho dx.
\]
Using $u_2=\partial_1\Psi$ and $\partial_1\rho=\Delta\Psi$, we have
\[
\int_{\Omega}\partial_tu_2 \rho dx = -\int_{\Omega}\partial_t\Psi \partial_1\rho dx = \int_{\Omega}\partial_t\Psi \Delta \Psi dx= \int_{\Omega}\partial_t\Delta \Psi \Psi dx = -\int_{\Omega}{\partial_t}\partial_1\rho \Psi dx = \int_{\Omega}\partial_t\rho u_2 dx
\]
Therefore, we get
\begin{align*}
\frac{1}{2}\left(\frac{d}{dt}\right)^2 E(t) &= \int_{\Omega}u_2\rho_t = -\int_{\Omega} u_2 u \cdot \nabla \rho dx = -\int_{\Omega}u_2^2\partial_2\rho dx - \int_{\Omega}u_2u_1\partial_1\rho dx\\
&\ge -\int_{\Omega}u_2^2\partial_2\rho dx - \rVert u_2\rVert_{L^2}\rVert u_1\partial_1\rho\rVert_{L^2}.\end{align*}
Using the assumption on $\rVert \theta\rVert_{H^3}$ in \eqref{assumption_delta} and \eqref{tpypu}, we have 
\[
-\int_{\Omega}u_2^2\partial_2\rho dx = -\int u_2^2 \partial_2\rho_s dx -\int u_2^2 \partial_2\theta dx \ge \rVert u_2\rVert_{L^2}^2- \sqrt{\delta_0}\rVert u_2\rVert_{L^2}^2\ge C\rVert u_2\rVert_{L^2}^2 .
\]
Using \eqref{energy_decay_IPM}, we see that the above inequality implies
\begin{align}\label{rkawksmd}
C\frac{d}{dt}K(t) + \rVert u_2\rVert_{L^2}^2 \le C\rVert u_2\rVert_{L^2}\rVert u_1\partial_1\rho\rVert_{L^2}.
\end{align}
Let us estimate $\rVert u_1\partial_1\rho\rVert_{L^2(\Omega)}$. Using \eqref{IPM_stream}, we rewrite
\begin{align}\label{llap}
\rVert u_1\partial_1\rho\rVert_{L^2(\Omega)} = \rVert \partial_2\Psi\Delta\Psi\rVert_{L^2}\le \rVert \nabla\Psi\rVert_{L^4}\rVert \Delta\Psi\rVert_{L^4} \le \rVert \nabla\Psi\rVert_{H^3}\rVert \Psi\rVert_{L^2},
\end{align}
where the last inequality is due to the Gagliardo-Nirenberg interpolation theorem.
  ~Moreover, we notice from \eqref{IPM_stream} that  $g(x_2):=\int_{\mathbb{T}}\Psi(x_1,x_2)dx_1$ satisfies
\[
\partial_{22}g(x_2) = \int_{\mathbb{T}}\partial_{22}\Psi(x_1,x_2)dx_1 = \int_{\mathbb{T}}\Delta \Psi(x_1,x_2) - \partial_{11}\Psi (x_1,x_2)dx_1=\int_{\mathbb{T}}\partial_1(-\rho +\partial_1\Psi)dx=0, 
\]
with the boundary condition, $g(0)=g(1)=0$. Therefore, $g=0$ for all $x_2\in [0,1]$. In other words, the map $x_1\to \Psi(x_1,x_2)$ has zero average for each fixed $x_2$. Then, the Poincar\'e inequality tells us
\[
\rVert \Psi\rVert_{L^2}\le \rVert \partial_1\Psi\rVert_{L^2}=\rVert u_2\rVert_{L^2}.
\] 
Plugging this into \eqref{llap}, we get
\[
\rVert u_1\partial_1\rho\rVert_{L^2}\le \rVert \nabla\Psi\rVert_{H^3}\rVert u_2\rVert_{L^2}\le \rVert \theta\rVert_{H^3}\rVert u_2\rVert_{L^2}\le \sqrt{\delta_0} \rVert u_2\rVert_{L^2},
\]
where the second inequality follows from Lemma~\ref{lemma_psi} and the last inequality follows from \eqref{assumption_delta}. Plugging this into \eqref{rkawksmd}, we obtain that for sufficiently small $\delta_0>0$, 
$\frac{d}{dt}K(t)\le - C\rVert u_2\rVert_{L^2}^2.$
Combining this with \eqref{energy_decay_IPM}, we finish the proof of the proposition.
 \end{proof}

  \begin{corollary}\label{Cor_IPM}
  Let $k\ge 3$. There exists $\delta_0=\delta_0(k)>0$ such that if \eqref{assumption_delta} holds, then,
  \begin{align*}
 E(t)&\le  \frac{C\delta}{t^k},\\
  \frac{2}{t}\int_{t/2}^{t} K(s)ds&\le \frac{C\delta}{t^{k+1}}, \\
   \frac{2}{t}\int_{t/2}^{t}\rVert u_2(s)\rVert_{L^2}^2 ds &\le\frac{C\delta}{t^{k+2}},
  \end{align*}
  for all $t\in [0,T]$.
  \end{corollary}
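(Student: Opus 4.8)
The plan is to chain three ingredients: the coercivity of the potential energy from Proposition~\ref{propoos}, the two energy identities of Proposition~\ref{main_IPM_decay}, and the elementary ODE lemmas of Section~\ref{preliminearty} (Lemmas~\ref{ABC_ODE} and~\ref{ODEBDC}). First I would verify that Proposition~\ref{propoos} applies to $f=\rho(t)$ for each $t\in[0,T]$: since $\theta(t)\in X^\infty(\Omega)$ we have $\rho(t)=\theta(t)+\rho_s=\rho_s$ on $\partial\Omega$, and \eqref{assumption_delta} gives $\rVert\rho(t)-\rho_s\rVert_{H^3}=\rVert\theta(t)\rVert_{H^3}\le\sqrt{\delta_0}$, so choosing $\delta_0=\delta_0(k)$ small (note $\rho_s=1-x_2$ has $\gamma=1$ and $\rVert\rho_s\rVert_{H^4}<\infty$) puts us in the hypotheses of that proposition. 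Since \eqref{IPMperturbed} transports $\rho$ along the incompressible flow $u=\nabla^\perp\Psi$, its vertical rearrangement is conserved, $\rho(t)^*=\rho_0^*$, so Proposition~\ref{propoos} yields, for all $t\in[0,T]$,
\[
E(t)=E_P(\rho(t))-E_P(\rho_0^*)\sim\rVert\rho(t)-\rho_0^*\rVert_{L^2}^2\le_k\rVert\partial_1\rho(t)\rVert_{L^2}^2 .
\]

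\textbf{Closing the inequality for $E$.} Because $\rho_s=\rho_s(x_2)$, we have $\partial_1\rho=\partial_1\theta=-\Delta\Psi$; hence, interpolating $\rVert\nabla\Psi\rVert_{H^1}$ between $\rVert\nabla\Psi\rVert_{L^2}$ and $\rVert\nabla\Psi\rVert_{H^k}$ via Gagliardo--Nirenberg and using the previous display,
\[
K(t)=\rVert\nabla\Psi(t)\rVert_{L^2}^2\ge_k\rVert\partial_1\rho(t)\rVert_{L^2}^{2k/(k-1)}\rVert\nabla\Psi(t)\rVert_{H^k}^{-2/(k-1)}\ge_k E(t)^{k/(k-1)}\rVert\nabla\Psi(t)\rVert_{H^k}^{-2/(k-1)} .
\]
Combined with the identity $\frac{d}{dt}E=-K$ from Proposition~\ref{main_IPM_decay}, this gives a closed differential inequality: for some $c=c(k)>0$,
\[
\frac{d}{dt}E(t)\le -c\,E(t)^{k/(k-1)}\rVert\nabla\Psi(t)\rVert_{H^k}^{-2/(k-1)} .
\]
(If $E(t_*)=0$ for some $t_*\in[0,T]$, then $E\equiv 0$ on $[t_*,T]$ by monotonicity and all three claimed bounds hold trivially there, so we may assume $E>0$ on $[0,T]$.)

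\textbf{Invoking the ODE lemmas.} I would then apply Lemma~\ref{ABC_ODE} with $f=E$, $n=\tfrac{k}{k-1}$, $\alpha=n-1=\tfrac{1}{k-1}$ and $a(t)=c^{-(k-1)}\rVert\nabla\Psi(t)\rVert_{H^k}^2$, so that $a(t)^{-\alpha}=c\,\rVert\nabla\Psi(t)\rVert_{H^k}^{-2/(k-1)}$. One computes $\alpha/(n-1)=1$ and $(\alpha+1)/(n-1)=1+1/\alpha=k$, while $A=\int_0^t a(s)\,ds\le c^{-(k-1)}\delta$ by \eqref{assumption_delta}; the lemma then gives $E(t)\le C_k\delta\,t^{-k}$, which is the first estimate. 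For the remaining two, apply Lemma~\ref{ODEBDC} with $f=E$, $g=K$, $h=C_k\rVert u_2\rVert_{L^2}^2$ (the constant $C_k$ being the one from Proposition~\ref{main_IPM_decay}) and $n=k$: the hypotheses $\frac{d}{dt}E=-K$, $\frac{d}{dt}K\le-C_k\rVert u_2\rVert_{L^2}^2$ and $E(t)\le C_k\delta\,t^{-k}$ are precisely \eqref{ODEsure}, so the lemma delivers $\tfrac{2}{t}\int_{t/2}^t K(s)\,ds\le C_k\delta\,t^{-(k+1)}$ and $\tfrac{2}{t}\int_{t/2}^t\rVert u_2(s)\rVert_{L^2}^2\,ds\le C_k\delta\,t^{-(k+2)}$, completing the proof.

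The step I expect to be the main obstacle is the middle one: one must carefully justify the rearrangement identity $\rho(t)^*=\rho_0^*$ — which rests on the transporting flow being a measure-preserving homeomorphism, available here since $\theta\in X^\infty(\Omega)$ keeps $\rho(t)$ of class $C^1$ — and track the Gagliardo--Nirenberg exponents so that the resulting differential inequality matches exactly the form $\frac{d}{dt}f\le-a^{-\alpha}f^n$ with $n=\tfrac{k}{k-1}$, $\alpha=\tfrac{1}{k-1}$ demanded by Lemma~\ref{ABC_ODE}. Everything after that is a direct substitution into Lemmas~\ref{ABC_ODE} and~\ref{ODEBDC}.
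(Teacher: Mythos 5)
Your proposal is correct and follows essentially the same route as the paper: the equivalence $E(t)\sim\rVert\rho(t)-\rho_0^*\rVert_{L^2}^2$ together with \eqref{hatedogs} from Proposition~\ref{propoos}, the Gagliardo--Nirenberg interpolation giving $K(t)\ge_C E(t)^{k/(k-1)}\rVert\nabla\Psi\rVert_{H^k}^{-2/(k-1)}$, then Lemma~\ref{ABC_ODE} with $\alpha=1/(k-1)$, $n=k/(k-1)$ and Lemma~\ref{ODEBDC} with $n=k$, exactly as in the paper's proof. Your explicit remarks on the conservation of the vertical rearrangement, $\rho(t)^*=\rho_0^*$, and on the degenerate case $E(t_*)=0$ are harmless additions that the paper leaves implicit.
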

 \begin{proof}
 Thanks to Proposition~\ref{propoos}, it holds that
 \begin{align}\label{compatibility_ipm}
 C^{-1}\rVert \rho(t)-\rho_0^*\rVert_{L^2}^2\le E(t)\le C\rVert \rho(t)-\rho_0^*\rVert_{L^2}^2 .
 \end{align}
  Now, using the Gagliardo-Nirenberg interpolation theorem, we observe that
 \[
 \rVert \partial_1\rho\rVert_{L^2(\Omega)} =\rVert \Delta\Psi\rVert_{L^2}\le C \rVert \nabla\Psi\rVert_{H^k}^{1/k}\rVert \nabla \Psi\rVert_{L^2}^{(k-1)/k}.
 \]
 On the other hand, applying \eqref{hatedogs}, we get $\rVert \partial_1\rho\rVert_{L^2}\ge C\rVert \rho-\rho_0^*\rVert_{L^2}$. Combining this with the above estimate, we find
 \[
 \rVert \nabla \Psi\rVert_{L^2}^2\ge C \left( \rVert \nabla \Psi\rVert_{H^k}^{-1/k}\rVert\rho-\rho_0^*\rVert_{L^2}\right)^{2k/(k-1)}\ge  C \rVert \nabla \Psi\rVert_{H^k}^{-2/(k-1)} E(t)^{k/(k-1)}.
 \]
 where the last inequality follows from \eqref{compatibility_ipm}. Hence, the variation of the potential energy in  \eqref{officeteddy3} must satisfy 
 \begin{align}\label{energyode}
 \frac{d}{dt}E(t)\le - C\left(\rVert \nabla \Psi\rVert_{H^k}^2\right)^{-1/(k-1)} E(t)^{k/(k-1)}.
 \end{align}
 Applying Lemma~\ref{ABC_ODE} with $\alpha=1/(k-1)>0$ and $n=k/(k-1)$ and $a(t)=\rVert \nabla\Psi\rVert_{H^k}^2$,  we get
 \begin{align}\label{chadl}
  E(t)&\le  \frac{C\delta}{t^k}.
 \end{align}
 Applying Lemma~\ref{ODEBDC}  to \eqref{officeteddy3} with $f(t)=E(t),\ g(t)=K(t),\ h(t)=C\rVert u_2(t)\rVert_{L^2}^2$, we get 
\[
\frac{2}{t}\int_{t/2}^{t} K(s)ds\le \frac{C}{t^{k+1}},\text{ and } \frac{2}{t}\int_{t/2}^{t}\rVert u_2(s)\rVert_{L^2}^2 ds \le\frac{C\delta}{t^{k+2}}.
\]
Together with \eqref{chadl}, we obtain the desired estimates.
 \end{proof}
 \subsection{Proof of Theorem~\ref{main_IPM}} Let $k\ge 3$ be fixed. Let $\delta_0$ be be fixed as  in Proposition~\ref{main_IPM_decay} . We claim that there exists $\epsilon_0(k)>0$ such that if $\theta_0:=\rho_0-{\rho}_s\in C^\infty_0(\Omega)$ and 
\begin{align*}
\rVert\rho_0-{\rho}_s\rVert_{H^k}\le \epsilon\le \epsilon_0,\end{align*}
 then for all $t>0$, it holds that
\begin{align}\label{exsproof1}
\rVert \theta(t)\rVert_{H^k}^2 +\int_0^t \rVert \nabla \Psi(t)\rVert_{H^k}^2 dt< C\epsilon^2,\text{ for some $C=C(k)>0$.}
\end{align}
Let us suppose for the moment that the claim is true. From Theorem~\ref{lwp}, we know that the maximal existence time depends only on $\rVert \theta(t)\rVert_{H^3}$, thus, the solution $\rho$ exists globally in time. Also, if necessary, we can further assume that $\epsilon$ is small enough to ensure Corollary~\ref{Cor_IPM} is applicable, that is, the solution $\rho(t)$ satisfies
\begin{align}\label{estimate_ipm14}
E(t)\le \frac{C\epsilon^2}{t^k},\quad \frac{2}{t}\int_{t/2}^{t} K(s)ds\le \frac{C\epsilon^2}{t^{k+1}}, \quad 
   \frac{2}{t}\int_{t/2}^{t}\rVert u_2(s)\rVert_{L^2}^2 ds \le\frac{C\epsilon^2}{t^{k+2}},\text{ for all $t>0$}.
\end{align}
In order to prove the theorem for $\rho_0\in H^k_0$ without assuming the smoothness, we can simply find an approximation $\rho_{0,n}$ such that $\rho_{0,n}-\overline{\rho}\in C^\infty_{0}(\Omega)$ such that $\rho_{0,n}\to \rho_0$ in $H^k$. Then  \eqref{estimate_ipm14} and \eqref{exsproof1} are satisfied by the global solutions $\rho_n(t)$, with initial data $\rho_{0,n}$. Since all estimates are uniform in $n$, the unique limit $\rho(t):=\lim_{n\to\infty}\rho_n(t)$  is the global in time solution satisfying the same estimates, that is,
\[
\rVert \rho(t)-\rho_s\rVert_{H^k}^2 \le \epsilon^2\quad \int_{\Omega}(\rho(t)-\rho_0^*)(x)x_2dx \le C\epsilon^2t^{-k},\text{ for all $t>0$}.
\]
Together with \eqref{energy_nondegeneracy}, we obtain all the desired estimates to establish Theorem~\ref{main_IPM}.

In the rest of the proof, we aim to prove \eqref{exsproof1}. Towards a contradiction, let $T^*>0$  be the first time that \eqref{exsproof1} breaks down, that is,
\begin{align}\label{reallyjoey}
\rVert \theta(T^*)\rVert_{H^k(\Omega)}^2 +\int_0^{T^*} \rVert \nabla \Psi(T^*)\rVert_{H^k(\Omega)}^2 dt= M\epsilon^2\ll 1,
\end{align} for some $M>0$, which will be chosen later. Towards a contradiction, let us denote
\[
f(t):= \rVert \partial_1^k \theta\rVert_{L^2}^2 +  \rVert \partial_2^k \theta\rVert_{L^2}^2.
\]
Since $\theta_0=\rho_0-\overline{\rho}\in C^\infty_0(\Omega)$, we have $\theta_0\in X^\infty(\Omega)$. Therefore it follows from Theorem~\ref{lwp} that $\theta(t)\in X^\infty(\Omega)$ for $t\in [0,T^*]$. Since $\theta(t)$ vanishes on the boundary,  Lemma~\ref{onedirection} tells us that
\begin{align}\label{estimate_ipm1}
C^{-1}\rVert \theta(t)\rVert_{H^k}^2\le f(t)\le C\rVert \theta(t)\rVert_{H^k}^2\le CM\epsilon^2,\quad f(0)\le C\epsilon^2.
\end{align}
Using the Sobolev embedding $W^{1,\infty}(\Omega)\hookrightarrow H^k(\Omega)$ for $k\ge3$, we have
\begin{align}\label{textbfs2}
\rVert u_2\rVert_{W^{1,\infty}}\le \rVert u_2\rVert_{H^k} \le \rVert\nabla \Psi\rVert_{H^k}\rVert \le \rVert \theta\rVert_{H^k}\le f(t)^{1/2}, 
\end{align}
where the second last inequality follows from \eqref{lemma_psi}.
Using this and Proposition~\ref{energy_IPM_estimate}, we observe that $f(t)$ satisfies
\begin{align}\label{rossgaller}
\frac{d}{dt}f(t)\le -C\rVert \nabla\Psi(t)\rVert_{H^k}^2 + Cf(t)^{3/2},\text{ for $t\in [0,T^*]$.}
\end{align}
 From this we see that $f$ satisfies $\frac{d}{dt}f(t)\le Cf^{3/2}$.  Since $f(0)\le C\epsilon^2$, one can easily find that
\begin{align}\label{lowerksd3sd}
f(t)\le \frac{C}{(\epsilon^{-1}-t)^2}, \text{ for $t\in [0,T^*]$.}
\end{align} 
This implies that for sufficiently small $\epsilon>0$, it holds that 
\begin{align}\label{lower_IPM}
f(t)\le C\epsilon^2,\text{ for $t\in [0,2]$.}
\end{align} In other words, for \eqref{reallyjoey} to happen, we must have 
\begin{align}\label{underboundT_IPM}
T^*> 2.
\end{align}
Now we analyze the energy inequality in a more careful manner. Using the Gagliardo-Nirenberg interpolation theorem, we see that 
\[
\rVert u_2\rVert_{W^{1,\infty}}\le \rVert u_2\rVert_{H^k}^{2/k}\rVert u_2\rVert_{L^2}^{1-2/k}\le \rVert \nabla\Psi\rVert_{H^k}^{2/k}\rVert u_2\rVert_{L^2}^{1-2/k}.
\]
Hence, using Young's inequality, we derive
\begin{align*}
\rVert u_2\rVert_{W^{1,\infty}}f(t)&\le \rVert \nabla\Psi\rVert_{H^k}^{2/k}\rVert u_2\rVert_{L^2}^{1-2/k}f(t) \\
& \le \eta\rVert \nabla\Psi\rVert_{H^k}^2 + C_{\eta}f(t)^{k/(k-1)}\rVert u_2\rVert_{L^2}^{(k-2)/(k-1)}\\
&\le \eta\rVert \nabla\Psi\rVert_{H^k}^2 + C_{\eta}(M\epsilon^2)^{k/(k-1)}\rVert u_2\rVert_{L^2}^{(k-2)/(k-1)}, \text{ for any $\eta>0$}.
\end{align*}
Substituting this into \eqref{rossgaller} with sufficiently small $\eta$ depending only on the implicit constant $C$, we arrive at
\[
f'(t)\le -C\rVert \nabla\Psi\rVert_{H^k}^2  +C(M\epsilon^2)^{k/(k-1)}\rVert u_2\rVert_{L^2}^{(k-2)/(k-1)},
\]Integrating the above in $t$ over $[1,T^*]$, we obtain
\begin{align}\label{ff1}
f(T^*)-f(1) + C_k\int_{1}^{T^*}\rVert\nabla \Psi(t)\rVert_{H^k}^2dt \le C_k(M\epsilon^2)^{k/(k-1)} \int_{1}^{T^*}\left(\rVert u_2(t)\rVert_{L^2}^2\right)^{\frac{k-2}{2(k-1)}}dt.
\end{align}
Thanks to  $M\epsilon^2\ll 1$ in \eqref{reallyjoey}, we apply Corollary~\ref{Cor_IPM} and obtain
\[
\frac{2}{t}\int_{t/2}^{t}\rVert u_2(s)\rVert_{L^2}^2 ds \le\frac{CM\epsilon^2}{t^{k+2}},\text{ for all $t\in [0,T^*]$.}
\]
Since $k\ge 3$, we have $n:=k+2>1$ and $\alpha:=\frac{k-2}{2(k-1)}\in (1/n,1)$. Thus applying Lemma~\ref{fialmass} with $f=\rVert u_2\rVert_{L^2}^2$ and $E=CM\epsilon^2$, we get
\[
\int_{1}^{T^*}\left(\rVert u_2(t)\rVert_{L^2}^2\right)^{\frac{k-2}{2(k-1)}}dt\le C(M\epsilon^2)^{\frac{k-2}{2(k-1)}}.
\]
Plugging this and \eqref{lower_IPM} into \eqref{ff1}, we obtain
\[
f(T^*) + C\int_{0}^{T^*}\rVert\nabla \Psi(t)\rVert_{H^k}^2dt\le C\left(\int_0^1\rVert\nabla \Psi\rVert_{H^k}^2dt +\epsilon^2 +(M\epsilon^2)^{\frac{3k-2}{2(k-1)}}\right)\le C \epsilon^2 + C(M\epsilon^2)^{\frac{3k-2}{2(k-1)}},
\]
where we used \eqref{textbfs2}  to justify the last inequality. Finally using $f(T^*)\ge C \rVert \theta\rVert_{H^k}^2$, which is due to the last inequality in \eqref{textbfs2}, we notice that for \eqref{reallyjoey} to hold, we must have\[
M\epsilon^2 =  \rVert \theta(T^*)\rVert_{H^k}^2 +\int_0^{T^*} \rVert \nabla \Psi(T^*)\rVert_{H^k(\Omega)}^2 dt\le_C f(T^*) + \int_{0}^{T^*}\rVert\nabla \Psi(t)\rVert_{H^k}^2dt \le C \epsilon^2 + C(M\epsilon^2)^{\frac{3k-2}{2(k-1)}}.
\]
Since $k\ge3$, we have $\frac{3k-2}{2(k-1)}>1$. Therefore our assumption $M\epsilon^2\ll1$ in \eqref{reallyjoey} and the above estimate give us
\[
M\epsilon^2\le C\epsilon^2.
\]
This leads to a contradiction with \eqref{reallyjoey}, if $M$ is chosen strictly larger than some implicit constant $C$, which depends only on $k$.  This finishes the proof of Theorem~\ref{main_IPM}.
\section{Stability in the Stokes transport system}\label{Stokessection}
In this section, our goal is to prove the asymptotic stability for the Stokes transport system \eqref{Stokes}. Throughout the section, we will assume that $\rho_s=\rho_s(x_2)$ is a function that is independent of $x_1$, and satisfies
\begin{align}\label{general_steady_stokes}
\gamma:=\inf_{x\in \Omega}\left(-\partial_2\rho_s(x_2)\right) >0,\quad \rVert \rho_s\rVert_{H^4(\Omega)} <\infty.
\end{align} 
In the rest of the paper, we allow the implicit constant $C$ to depend on $\gamma$ and $\rVert \rho_s\rVert_{H^4}$, but we omit its dependence in the notation for simplicity. While the proof will exhibit a structure quite similar to that of the previous section, we will furnish enough details for the sake of completeness of the paper.
\subsection{Preliminaries for the Stokes transport system}
Let us briefly review relevant results concerning the Stokes transport system in the periodic channel  For more details, we refer readers to the paper by Dalibard--Guillod--Leblond \cite{dalibard2023long} and additional references therein. 
 
  Let us consider a solution $\rho(t)$ to the Stokes transport system. As in the previous section we denote
 \begin{align}\label{def_rho252ss}
 \theta(t):=\rho(t)-\rho_s.
 \end{align}
  Substituting $\rho=\theta+{\rho}_s$ in \eqref{Stokes}, it immediately follows that
\begin{align}\label{Stokes_perturbed1}
\begin{cases}
\theta_t + u\cdot\nabla \theta = -\partial_2\rho_su_2,\\
\nabla \cdot u =0,\\
\end{cases}
with \ 
\begin{cases}
\Delta^2\Psi = \partial_1\theta, & \text{ in $\Omega$,}\\
\Psi=\nabla\Psi =0, & \text{ on $\partial\Omega$}.
\end{cases}
\end{align}
 Thanks to the Bilaplacian operator in the equation for the stream function, the  velocity in the Stokes transport system is much regular compared to that of the IPM equation. More quantitative estimate can be found in the next lemma.
    \begin{lemma}\cite[Lemma B.1]{dalibard2023long}\label{lemma_psi_stokes}
    Let $f\in H^k(\Omega)$ for $k\ge -2$ and $\Psi$ be a solution to
    \[
    \begin{cases}
    \Delta^2 \Psi =  f & \text{ in $\Omega$},\\
    \Psi =\nabla\Psi = 0 & \text{ on $\partial \Omega$.}
    \end{cases}
    \]
    Then  $\Psi\in H_{0}^2(\Omega)\cap H^{k+4}(\Omega)$ and $\rVert \Psi\rVert_{H^{k+4}}\le C_k \rVert f\rVert_{H^k}$.
    \end{lemma}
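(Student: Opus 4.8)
The plan is to treat the lemma as the standard elliptic regularity estimate for the clamped bilaplacian, exploiting that $\partial\Omega$ is flat so that the periodic variable $x_1$ is harmless. First I would set up the weak formulation: for $f\in H^{-2}(\Omega)$, identified with the dual of $H^2_0(\Omega)$, a solution is $\Psi\in H^2_0(\Omega)$ with $\int_\Omega\Delta\Psi\,\Delta\varphi\,dx=\langle f,\varphi\rangle$ for all $\varphi\in H^2_0(\Omega)$. The key structural fact is that for $\Psi$ satisfying $\Psi=\nabla\Psi=0$ on $\partial\Omega$ and periodic in $x_1$ one has, after integrating by parts twice and using that the tangential derivative $\partial_1\Psi$ also vanishes on $\partial\Omega$, the identity $\rVert\Delta\Psi\rVert_{L^2}^2=\sum_{i,j=1}^2\rVert\partial_i\partial_j\Psi\rVert_{L^2}^2$ with no boundary contribution. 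Combined with the Poincar\'e inequality, which is legitimate since both $\Psi$ and $\nabla\Psi$ vanish on $\partial\Omega$, this gives $\rVert\Psi\rVert_{H^2}^2\le_C\rVert\Delta\Psi\rVert_{L^2}^2$, so the bilinear form $(\Psi,\varphi)\mapsto\int_\Omega\Delta\Psi\,\Delta\varphi\,dx$ is bounded and coercive on $H^2_0(\Omega)$, and Lax--Milgram yields a unique $\Psi\in H^2_0(\Omega)$ with $\rVert\Psi\rVert_{H^2}\le_C\rVert f\rVert_{H^{-2}}$. This settles the case $k=-2$ and, by linearity, uniqueness for every $k$.

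Then I would bootstrap by induction on $m\ge0$, proving that $f\in H^{m-2}(\Omega)$ forces $\Psi\in H^{m+2}(\Omega)$ with $\rVert\Psi\rVert_{H^{m+2}}\le_C\rVert f\rVert_{H^{m-2}}$; the case $m=0$ is the previous paragraph and $m=k+2$ is the lemma. In the step from $m$ to $m+1$, given $f\in H^{m-1}(\Omega)$, the tangential difference quotient $D_1^h\Psi$ lies in $H^2_0(\Omega)$ and solves the same problem with datum $D_1^h f$, whose $H^{m-2}$ norm is bounded by $\rVert f\rVert_{H^{m-1}}$ uniformly in $h$; the inductive hypothesis and $h\to0$ then give $\rVert\partial_1\Psi\rVert_{H^{m+2}}\le_C\rVert f\rVert_{H^{m-1}}$, and together with $\rVert\Psi\rVert_{H^{m+2}}\le_C\rVert f\rVert_{H^{m-1}}$ this controls every derivative of $\Psi$ of order at most $m+3$ \emph{except} $\partial_2^{m+3}\Psi$. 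That remaining one is recovered algebraically from the PDE rewritten as $\partial_2^4\Psi=f-2\,\partial_1^2\partial_2^2\Psi-\partial_1^4\Psi$: applying $\partial_2^{m-1}$, the right-hand side lies in $L^2$ because $\partial_2^{m-1}f\in L^2$ while the other two terms are order-$(m+3)$ derivatives of $\Psi$ carrying at least one $\partial_1$ and hence already bounded. This closes the induction; the differentiations of the equation and the boundary integrations by parts are the ones justified by the difference quotient together with a smooth approximation of $f$.

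I expect the only genuinely delicate point to be the base case — verifying that the boundary terms in the coercivity identity really vanish under the clamped conditions, and interpreting $\Delta^2\Psi=f$ distributionally when $f\in H^{-2}$ — since once the $k=-2$ estimate and uniqueness are in hand the remainder is routine precisely because the boundary is flat: the periodic derivative is free and the single missing normal derivative is read straight off the equation. As an alternative one could invoke Agmon--Douglis--Nirenberg theory, the clamped bilaplacian satisfying the Lopatinski--Shapiro complementing condition, but the hands-on argument above is cleaner in this geometry.
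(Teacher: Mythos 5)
The paper offers no proof of this lemma at all: it is quoted verbatim from \cite[Lemma B.1]{dalibard2023long}, so there is no internal argument to compare against. Judged on its own terms, your argument is the standard flat-boundary elliptic bootstrap and it is essentially sound: Lax--Milgram on $H^2_0(\Omega)$ with coercivity from $\rVert\Delta\Psi\rVert_{L^2}^2=\sum_{i,j}\rVert\partial_i\partial_j\Psi\rVert_{L^2}^2$ (valid on $H^2_0$ by density, and your boundary-term justification via $\partial_1\Psi=\partial_2\Psi=0$ on the flat boundary is correct), then tangential difference quotients in the periodic direction, then recovery of the single missing normal derivative from $\partial_2^4\Psi=f-\partial_1^4\Psi-2\partial_1^2\partial_2^2\Psi$. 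This exploits exactly the two features that make $\Omega=\mathbb{T}\times(0,1)$ easy (no corners, globally defined tangential translations), and it covers the base case $k=-2$ and all integer $k\ge 0$, which is all the paper ever uses ($k=0$ in Lemma~\ref{bilap}, $k=1$ for $\rVert u\rVert_{H^5}\le_C\rVert\partial_1\theta\rVert_{H^1}$, etc.).

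Two points deserve attention before this can stand as a complete proof of the statement as written. First, the induction step does not literally cover $k=-1$: with $f\in H^{-1}$ you cannot ``apply $\partial_2^{m-1}$'' (this would be $\partial_2^{-1}$), and $\partial_2^{m-1}f\in L^2$ fails. The approach still works, but the last step must be rephrased, e.g.: setting $w=\partial_2^2\Psi\in L^2$, the difference-quotient step gives $\partial_1 w\in L^2$, and the equation gives $\partial_2^2w=f-\partial_1^3(\partial_1\Psi)-2\partial_1\partial_2^2(\partial_1\Psi)\in H^{-1}$; writing an element of $H^{-1}$ as $g_0+\partial_2 g_1$ with $g_0,g_1\in L^2$ (slicewise in $x_2$, uniformly in $x_1$) one concludes $\partial_2 w\in L^2$ with the right bound. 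Alternatively one can simply interpolate between the $k=-2$ and $k=0$ estimates, or note that $k=-1$ is never used in this paper. Second, your uniform bound $\rVert D_1^hf\rVert_{H^{m-2}}\le\rVert f\rVert_{H^{m-1}}$ is only standard for $m\ge 2$; for $m=0,1$ it needs the duality remark $\langle D_1^hf,\varphi\rangle=-\langle f,D_1^{-h}\varphi\rangle$ together with $\rVert D_1^{-h}\varphi\rVert_{H^{1-m+ \,\cdot}}\le\rVert\varphi\rVert_{H^{2-m+\,\cdot}}$, which is available precisely because translation in $x_1$ preserves $H^s_0$-type spaces on this domain. Both are routine repairs, not flaws in the strategy.
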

Thanks to more regular nature of the velocity, the global in time existence of the solution follows in a standard way. More precise statement can be formulated as below.
\begin{theorem}\cite[Lemma 2.1, Theorem A.1]{dalibard2023long}\label{lwp_stokes} Let $\theta_0\in H^k(\Omega)$ with $k\ge 3$. Then there exists a unique global solution $\rho(t)$ to \eqref{Stokes} such that $\rho\in C(\mathbb{R}^+; H^m(\Omega))$. Furthermore, denoting $\theta:=\rho-{\rho}_s$ and $\overline{\theta}(x_2):=\frac{1}{2\pi}\int_{\mathbb{T}}\theta(x_1,x_2)dx_1$, it holds that if $\theta_0=\partial_n\theta =\partial_n^2\overline{\theta}=0$ on $\partial\Omega$, then
\[
\theta(t)=\partial_2\theta(t)=\partial_2^2\overline{\theta}(t) = 0,\text{ on $\partial\Omega$.}
\]
\end{theorem}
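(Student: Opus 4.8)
Since this statement is established in \cite{dalibard2023long} (see also \cite{leblond2022well}), the plan is to recall the structure of the argument, which will be reused in the a priori estimates of this section. For local existence and uniqueness, I would run the standard iteration for \eqref{Stokes_perturbed1}: given $\theta^{(n)}$, solve $\Delta^2\Psi^{(n)}=\partial_1\theta^{(n)}$ with $\Psi^{(n)}=\nabla\Psi^{(n)}=0$ on $\partial\Omega$, set $u^{(n)}=\nabla^\perp\Psi^{(n)}$, and let $\theta^{(n+1)}$ solve the linear transport equation $\partial_t\theta^{(n+1)}+u^{(n)}\cdot\nabla\theta^{(n+1)}=-\partial_2\rho_s u^{(n)}_2$. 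By Lemma~\ref{lemma_psi_stokes} the elliptic solve gains four derivatives, so $\rVert u^{(n)}\rVert_{H^{k+2}}\le C\rVert\theta^{(n)}\rVert_{H^k}$; in particular $u^{(n)}\in C([0,T];C^1(\overline\Omega))$ vanishes on $\partial\Omega$, so its flow is well defined and leaves $\Omega$ invariant, which yields $\theta^{(n+1)}$ along characteristics. An $H^k$ energy estimate, using the tame and commutator inequalities of Lemma~\ref{Simple_tame} together with the gain of derivatives above, then furnishes a uniform bound on $[0,T]$ with $T=T(\rVert\theta_0\rVert_{H^k})>0$, and a contraction estimate in $L^2$ closes the iteration; uniqueness follows by the same $L^2$ difference estimate.

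For the globalization I would exploit the transport structure. Since $u$ is divergence free and $u|_{\partial\Omega}=0$, every $L^p$ norm of $\rho=\theta+\rho_s$ is conserved by \eqref{Stokes}, in particular $\rVert\rho(t)\rVert_{L^\infty}=\rVert\rho_0\rVert_{L^\infty}$. Because the biharmonic solve gains four derivatives, $\nabla u=\nabla\nabla^\perp\Psi$ is obtained from $\rho$ by an operator of order $-1$, so $\rVert\nabla u(t)\rVert_{L^\infty}\le C\rVert\rho(t)\rVert_{L^\infty}=C\rVert\rho_0\rVert_{L^\infty}$, uniformly in time (and with no logarithmic loss, in contrast with the $2$D Euler velocity-gradient estimate). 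Plugging this and $\rVert u\rVert_{H^k}\le C\rVert\theta\rVert_{H^{k-2}}$ (again Lemma~\ref{lemma_psi_stokes}) into the $H^k$ energy estimate gives $\tfrac{d}{dt}\rVert\theta(t)\rVert_{H^k}^2\le C(1+\rVert\theta(t)\rVert_{H^k}^2)$, so Gr\"onwall forbids blow-up in finite time and the solution is global.

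For the propagation of the boundary conditions I would first reduce to smooth solutions (by density of smooth initial data satisfying the same boundary conditions and continuity of the trace maps on $C([0,T];H^k(\Omega))$), for which the computations below are classical. Restricting $\partial_t\theta+u\cdot\nabla\theta=-\partial_2\rho_s u_2$ to $\partial\Omega$ and using $u|_{\partial\Omega}=0$ gives $\partial_t(\theta|_{\partial\Omega})=0$, hence $\theta(t)|_{\partial\Omega}=\theta_0|_{\partial\Omega}=0$. Next, $\nabla\cdot u=0$ together with $u|_{\partial\Omega}=0$ (so that all tangential derivatives of $u$ vanish on $\partial\Omega$) forces $\partial_2 u_2=-\partial_1 u_1=0$ on $\partial\Omega$; applying $\partial_2$ to the equation and restricting to $\partial\Omega$, every term then contains a factor that vanishes there (one of $u_1$, $u_2$, $\partial_2 u_2$, or $\partial_1\theta=\partial_1(\theta|_{\partial\Omega})=0$), so $\partial_t(\partial_2\theta|_{\partial\Omega})=0$ and $\partial_2\theta(t)|_{\partial\Omega}=\partial_n\theta_0=0$. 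Finally, averaging the equation over $x_1\in\mathbb{T}$ and using $\overline{u_2}=\tfrac1{2\pi}\int_{\mathbb T}\partial_1\Psi\,dx_1=0$ yields the conservation form $\partial_t\overline\theta=-\partial_2\,\overline{u_2\theta}$; since $u_2=\partial_2 u_2=\theta=\partial_2\theta=0$ on $\partial\Omega$, one checks termwise that $\partial_2^j(u_2\theta)=0$ on $\partial\Omega$ for $j\le 3$, so differentiating the conservation form twice in $x_2$ and restricting to $\partial\Omega$ gives $\partial_t(\partial_2^2\overline\theta|_{\partial\Omega})=-\big(\partial_2^3\overline{u_2\theta}\big)\big|_{\partial\Omega}=0$, hence $\partial_2^2\overline\theta(t)|_{\partial\Omega}=\partial_n^2\overline\theta_0=0$.

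I expect the two delicate points to be: (i) the global a priori bound, whose cleanness relies crucially on the Stokes operator gaining a full derivative, so that $\rVert\nabla u\rVert_{L^\infty}$ is controlled by the \emph{conserved} quantity $\rVert\rho\rVert_{L^\infty}$ rather than by $\rVert\theta\rVert_{H^k}$; and (ii) justifying the boundary identities at the stated regularity, which is handled by performing them for smooth solutions --- where $u_2\theta$, $\partial_2^3(u_2\theta)$ and the like are continuous up to $\partial\Omega$ thanks to $u\in H^{k+2}$ --- and passing to the limit in $C([0,T];H^k(\Omega))$, the boundary conditions being closed under this convergence.
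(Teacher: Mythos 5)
The paper does not prove this statement at all: it is imported verbatim as a citation of \cite[Lemma 2.1, Theorem A.1]{dalibard2023long}, so there is no internal proof to compare against. Your reconstruction follows the same route as the cited works: transport--elliptic iteration for local well-posedness using the four-derivative gain of Lemma~\ref{lemma_psi_stokes}, globalization through the fact that $\nabla u$ is a negative-order operator applied to $\rho$ so that $\rVert \nabla u\rVert_{L^\infty}$ is controlled by the conserved quantity $\rVert\rho\rVert_{L^\infty}$ without the logarithmic loss of 2D Euler, and propagation of the boundary conditions by restricting the equation and its $x_1$-average to $\partial\Omega$ (your termwise checks that $\partial_2 u_2=-\partial_1 u_1=0$, $\partial_1\theta=0$ on $\partial\Omega$, and $\partial_2^j(u_2\theta)|_{\partial\Omega}=0$ for $j\le 3$ are correct and are exactly what makes the three boundary identities persist).

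One step is too quick as written: the asserted inequality $\frac{d}{dt}\rVert\theta\rVert_{H^k}^2\le C\bigl(1+\rVert\theta\rVert_{H^k}^2\bigr)$ with $C$ depending only on $\rVert\rho_0\rVert_{L^\infty}$ does not follow merely from $\rVert\nabla u\rVert_{L^\infty}\le C\rVert\rho_0\rVert_{L^\infty}$ and $\rVert u\rVert_{H^k}\le C\rVert\theta\rVert_{H^{k-2}}$, because the commutator estimate also produces the term $\rVert u\rVert_{H^k}\rVert\nabla\theta\rVert_{L^\infty}\rVert\theta\rVert_{H^k}$, and $\rVert\nabla\theta\rVert_{L^\infty}$ is not conserved; bounding it by $\rVert\theta\rVert_{H^k}$ makes the right-hand side cubic and Gr\"onwall then only gives a local bound. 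To close the global argument you need either (a) a preliminary Gr\"onwall for $\rVert\nabla\rho\rVert_{L^\infty}$, which grows at most exponentially since $\rVert\nabla u\rVert_{L^\infty}$ is uniformly bounded, and then a time-dependent (double-exponential) bound on $\rVert\theta\rVert_{H^k}$, or (b) the interpolation route: use the two-derivative gain $\rVert u\rVert_{H^k}\lesssim\rVert\theta\rVert_{H^{k-2}}$ together with Gagliardo--Nirenberg against the conserved $\rVert\theta\rVert_{L^\infty}$ to show this term is in fact of order $\rVert\theta\rVert_{H^k}^{2-1/(k-1)}$, hence subcritical for a global bound. Either repair yields the stated global solution; also, the cleanest justification of $\rVert\nabla u\rVert_{L^\infty}\lesssim\rVert\rho\rVert_{L^\infty}$ is via the $W^{2,p}$ elliptic estimate for the clamped bi-Laplacian with $p>2$ and Sobolev embedding, rather than an endpoint $L^\infty$ bound for an order $-1$ operator.
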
 

 In view of Theorem~\ref{main_Stokes}, we will consider initial data $\theta_0\in H^2_0(\Omega)\cap H^4(\Omega)$. The usual trace theorem ensures that  $\theta,\partial_2\theta$ and $\partial_{22}\overline{\theta}$ are indeed well-defined on the boundary $\partial\Omega$ and vanish identically as stated in the above theorem.
\subsection{Energy estimates}
In this subsection, our objective is to derive an a priori energy estimate. The main result is presented in Proposition~\ref{Stokes_perturb2ed1}. As one can notice from the proposition, we will focus on estimating the evolution of $\rVert \Delta^2\theta\rVert_{L^2}$, rather than directly examining $\rVert \theta\rVert_{H^4}$. The reason is to mitigate potential complications in the finer analysis of the anisotropic nature of the nonlinear term in the equation. As expected, the norm $\rVert\Delta^2\theta\rVert_{L^2}$ is equivalent to $\rVert\theta\rVert_{H^4}$ for solutions vanishing on the boundary. Although the immediate equivalence between these two norms may not be apparent through elementary integration by parts, Lemma~\ref{lemma_psi_stokes} provides a rigorous justification
    \begin{lemma}\label{bilap}
If $f\in H^4(\Omega)$ satisfies $f=\partial_2f =0$ on $\partial\Omega$ and $\int_{\Omega}f(x)dx=0$, then
\[
\rVert f\rVert_{{H}^2(\Omega)} \le C\rVert \Delta f\rVert_{L^2(\Omega)},\quad \rVert f\rVert_{H^4(\Omega)}\le C\rVert \Delta^2f\rVert_{L^2(\Omega)}.
\]
\end{lemma}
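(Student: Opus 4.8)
The plan is to treat the two inequalities separately, reducing each to a short integration-by-parts computation together with the biharmonic solvability estimate of Lemma~\ref{lemma_psi_stokes}.

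For the first bound $\rVert f\rVert_{H^2}\le C\rVert\Delta f\rVert_{L^2}$ I would proceed by hand. Since $f=0$ on $\partial\Omega$, writing $f(x_1,x_2)=\int_0^{x_2}\partial_2 f(x_1,s)\,ds$ gives the Poincar\'e bound $\rVert f\rVert_{L^2}\le\rVert\partial_2 f\rVert_{L^2}\le\rVert\nabla f\rVert_{L^2}$, while integrating by parts yields $\rVert\nabla f\rVert_{L^2}^2=-\int_\Omega f\,\Delta f\,dx\le\rVert f\rVert_{L^2}\rVert\Delta f\rVert_{L^2}$ (the boundary term vanishes because $f|_{\partial\Omega}=0$), hence $\rVert f\rVert_{L^2}+\rVert\nabla f\rVert_{L^2}\le C\rVert\Delta f\rVert_{L^2}$. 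For the second-order part I would use the identity
\[
\int_\Omega|\Delta f|^2\,dx=\rVert\partial_1^2 f\rVert_{L^2}^2+2\int_\Omega\partial_1^2 f\,\partial_2^2 f\,dx+\rVert\partial_2^2 f\rVert_{L^2}^2=\rVert\partial_1^2 f\rVert_{L^2}^2+2\rVert\partial_1\partial_2 f\rVert_{L^2}^2+\rVert\partial_2^2 f\rVert_{L^2}^2=\rVert\nabla^2 f\rVert_{L^2}^2,
\]
where the cross term is integrated by parts once in $x_1$ (no boundary contribution, since $x_1\in\mathbb{T}$) and once in $x_2$ (the boundary contribution is proportional to $\partial_1 f$ on $\partial\Omega$, which vanishes as the tangential derivative of $f|_{\partial\Omega}=0$). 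Combining the two displays gives the first inequality. Alternatively, one could apply Lemma~\ref{lemma_psi_stokes} with $k=-2$ directly to $f$, regarding $\Delta^2 f=\Delta(\Delta f)$ as an element of $H^{-2}(\Omega)$ with $\rVert\Delta(\Delta f)\rVert_{H^{-2}}\le\rVert\Delta f\rVert_{L^2}$.

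For the second bound $\rVert f\rVert_{H^4}\le C\rVert\Delta^2 f\rVert_{L^2}$ I would invoke Lemma~\ref{lemma_psi_stokes} directly. The hypotheses $f=\partial_2 f=0$ on $\partial\Omega$ force $\nabla f=0$ on $\partial\Omega$ — the normal derivative $\partial_2 f$ vanishes by assumption, the tangential derivative $\partial_1 f$ vanishes because $f|_{\partial\Omega}=0$ — so $f\in H^2_0(\Omega)\cap H^4(\Omega)$ is itself a solution of the clamped-plate problem $\Delta^2\Psi=g$, $\Psi=\nabla\Psi=0$ on $\partial\Omega$, with right-hand side $g:=\Delta^2 f\in L^2(\Omega)$. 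This problem has a unique solution in $H^2_0\cap H^4$ (if $\Delta^2 w=0$ with $w=\nabla w=0$ on $\partial\Omega$, two integrations by parts give $\int_\Omega|\Delta w|^2\,dx=0$, so $\Delta w=0$, and then $w=0$), so applying Lemma~\ref{lemma_psi_stokes} with $k=0$ to $\Psi=f$ gives $\rVert f\rVert_{H^4}\le C\rVert g\rVert_{L^2}=C\rVert\Delta^2 f\rVert_{L^2}$.

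I do not expect a genuine obstacle: the only points that require care are checking that every boundary term produced by the integrations by parts vanishes — which rests entirely on $f$ and $\nabla f$ vanishing on $\partial\Omega$ and on periodicity in $x_1$ — and observing that $f$ legitimately plays the role of ``a solution'' in Lemma~\ref{lemma_psi_stokes}, so that the quantitative estimate of that lemma applies to $f$ itself. Incidentally, the hypothesis $\int_\Omega f\,dx=0$ is not actually used in either estimate; it is stated only because it holds automatically in the setting where the lemma is applied.
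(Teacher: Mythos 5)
Your proposal is correct and takes essentially the same route as the paper: the first bound rests on the identity $\rVert \Delta f\rVert_{L^2}^2=\rVert \nabla^2 f\rVert_{L^2}^2$ (which the paper simply asserts and you verify by integration by parts, the boundary terms vanishing thanks to $f=\partial_1 f=\partial_2 f=0$ on $\partial\Omega$ and periodicity in $x_1$) together with a Poincar\'e bound for the lower-order terms, while the second bound is exactly the paper's application of Lemma~\ref{lemma_psi_stokes} with $k=0$, which you merely justify a bit more carefully by checking that $f$ itself is the unique solution of the clamped-plate problem with data $\Delta^2 f$. Your side remark that the zero-average hypothesis is never actually used is also consistent with the paper's argument, which does not invoke it either.
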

\begin{proof}
The only problematic part is to ensure that the mixed derivatives can be estimated by the Laplacian/Bilaplacian operator. The first inequality is trivial since $\rVert \Delta f\rVert_{L^2}^2 = \rVert \partial_{11}f\rVert_{L^2}^2 +\rVert \partial_{22}f\rVert_{L^2}^2+2\rVert\partial_{12}f\rVert_{L^2}^2$, which is strong enough to control all the second derivatives of $f$. For the second inequality,  we simply apply Lemma~\ref{lemma_psi_stokes} with $k=0$, yielding the desired result.
\end{proof}

\begin{proposition}\label{Stokes_perturb2ed1}
Let $\theta_0\in H^2_0(\Omega)\cap H^4(\Omega)$ and $\theta(t)$ be the unique smooth solution to \eqref{Stokes_perturbed1}. It holds that
\begin{align*}
\frac{d}{dt}\left(\rVert \Delta^2 \theta\rVert_{L^2}^2\right)&\le_C  -C(1- C\rVert \Delta^2 \theta\rVert_{L^2})\rVert \partial_1\Delta \theta\rVert_{L^2}^2 + \left(\rVert u_2\rVert_{H^3} +  \rVert u_2\rVert_{W^{2,\infty}}\right)\rVert \Delta^2\theta\rVert_{L^2}^2\\
& \  +   \rVert u_2\rVert_{H^3}\rVert \Delta^2\theta\rVert_{L^2}+\rVert \partial_1\theta\rVert_{L^2}^2.
\end{align*}
\end{proposition}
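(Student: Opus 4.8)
The plan is to run an $L^{2}$ energy estimate on $\Delta^{2}\theta$, in the spirit of Proposition~\ref{energy_IPM_estimate} but using the isotropic operator $\Delta^{2}$ instead of the directional derivatives $\partial_{1}^{4},\partial_{2}^{4}$, which keeps the bookkeeping of the transport nonlinearity manageable. Applying $\Delta^{2}$ to the $\theta$-equation of \eqref{Stokes_perturbed1}, pairing with $\Delta^{2}\theta$ in $L^{2}(\Omega)$, and using $u_{2}=\partial_{1}\Psi$, $\Delta^{2}\Psi=\partial_{1}\theta$, one is led to
\[
\tfrac12\tfrac{d}{dt}\|\Delta^{2}\theta\|_{L^{2}}^{2}
=-\int_{\Omega}\Delta^{2}(u\cdot\nabla\theta)\,\Delta^{2}\theta\,dx
-\int_{\Omega}\Delta^{2}(\partial_{2}\rho_{s}\,u_{2})\,\Delta^{2}\theta\,dx=:N+L.
\]
Throughout, Lemma~\ref{lemma_psi_stokes} is used to trade norms of $\Psi$ and $u$ for norms of $\theta$ (in particular $\|u_{2}\|_{H^{3}}\le C\|\partial_{1}\theta\|_{L^{2}}$ and, since $\Psi$ has zero horizontal average so the Poincar\'e inequality applies in $x_{1}$, $\|u\|_{W^{1,\infty}}\le C\|u_{2}\|_{W^{2,\infty}}$ and comparable bounds relating $u_{1}$- and $u_{2}$-norms), Lemma~\ref{bilap} gives $\|\theta\|_{H^{4}}\le C\|\Delta^{2}\theta\|_{L^{2}}$, and the boundary conditions $\theta=\partial_{2}\theta=0$, $\Psi=\nabla\Psi=0$ (hence $u_{2}=\partial_{2}u_{2}=0$) on $\partial\Omega$ from Theorem~\ref{lwp_stokes} make the integrations by parts below boundary-term free.

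For the dissipative term $L$, I would first isolate the principal part. By the Leibniz rule, $\Delta^{2}(\partial_{2}\rho_{s}u_{2})=\partial_{2}\rho_{s}\,\Delta^{2}u_{2}+R$, where $R$ is a finite sum of terms $\partial_{2}^{j}(\partial_{2}\rho_{s})\,\partial^{\beta}u_{2}$ with $j\ge1$ and $j+|\beta|=4$. Since $\Delta^{2}u_{2}=\partial_{1}\Delta^{2}\Psi=\partial_{11}\theta$, the principal contribution is $-\int\partial_{2}\rho_{s}\,\partial_{11}\theta\,\Delta^{2}\theta\,dx$; integrating by parts once in $x_{1}$ and then twice across the Laplacian (legitimate because $\partial_{1}\theta$ and $\partial_{1}\partial_{2}\theta$ vanish on $\partial\Omega$), this equals
\[
\int_{\Omega}\partial_{2}\rho_{s}\,(\Delta\partial_{1}\theta)^{2}\,dx\;+\;\big(\text{terms with a factor }\partial_{2}^{2}\rho_{s}\text{ and at most three derivatives on }\theta\big),
\]
and $\partial_{2}\rho_{s}\le-\gamma$ forces the first integral to be $\le-\gamma\|\partial_{1}\Delta\theta\|_{L^{2}}^{2}$; the lower-order leftovers are absorbed by Cauchy--Schwarz, Gagliardo--Nirenberg interpolation and Young's inequality into $\tfrac\gamma2\|\partial_{1}\Delta\theta\|_{L^{2}}^{2}+C\|\partial_{1}\theta\|_{L^{2}}^{2}$. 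For $R$, every term involves at most four $x_{2}$-derivatives of $\partial_{2}\rho_{s}$ and at most three derivatives of $u_{2}$; using \eqref{rkskd2sd} for the lower derivatives of $\rho_{s}$, the $H^{4}$-bound on $\rho_{s}$, the Sobolev embedding $H^{3}\hookrightarrow L^{\infty}$, and integrations by parts in $x_{2}$ that are boundary-free because $u_{2}=\partial_{2}u_{2}=0$ on $\partial\Omega$, Cauchy--Schwarz gives $\big|\int R\,\Delta^{2}\theta\,dx\big|\le C\|u_{2}\|_{H^{3}}\|\Delta^{2}\theta\|_{L^{2}}$. Altogether $L\le-C\|\partial_{1}\Delta\theta\|_{L^{2}}^{2}+C\|u_{2}\|_{H^{3}}\|\Delta^{2}\theta\|_{L^{2}}+C\|\partial_{1}\theta\|_{L^{2}}^{2}$.

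For the nonlinear term $N$, write $\Delta^{2}(u\cdot\nabla\theta)=u\cdot\nabla\Delta^{2}\theta+[\Delta^{2},u\cdot\nabla]\theta$. The first piece gives $-\int u\cdot\nabla\big(\tfrac12(\Delta^{2}\theta)^{2}\big)\,dx=0$ by $\nabla\cdot u=0$ and $u=0$ on $\partial\Omega$. The commutator is a sum of terms $(\partial^{\alpha}u_{i})(\partial^{\beta}\partial_{i}\theta)$ with $1\le|\alpha|$, $|\alpha|+|\beta|=4$; since the Stokes velocity is two derivatives smoother than $\theta$ ($\|u\|_{H^{k+2}}\le C\|\theta\|_{H^{k}}$ by Lemma~\ref{lemma_psi_stokes}), a commutator estimate of the type in Lemma~\ref{Simple_tame} — combined, crucially, with a further integration by parts whenever a high derivative would otherwise land on $\theta$, so that the surviving $\theta$-factors are either the top-order $\|\Delta^{2}\theta\|_{L^{2}}$ or low-order ones controlled by $\|\partial_{1}\Delta\theta\|_{L^{2}}$, while the spare derivatives are shifted onto the velocity and re-expressed through $u=\nabla^{\perp}\Psi$ — bounds $N$ by $C\big(\|u_{2}\|_{H^{3}}+\|u_{2}\|_{W^{2,\infty}}\big)\|\Delta^{2}\theta\|_{L^{2}}^{2}$ together with one remaining contribution of size $C\|\Delta^{2}\theta\|_{L^{2}}\|\partial_{1}\Delta\theta\|_{L^{2}}^{2}$, arising from terms in which a single derivative hits $u$ and a factor like $\|\partial_{2}\theta\|_{L^{\infty}}\lesssim\|\Delta^{2}\theta\|_{L^{2}}$ multiplies $(\Delta\partial_{1}\theta)^{2}$-type quantities; this last term is exactly what produces the factor $1-C\|\Delta^{2}\theta\|_{L^{2}}$ in front of the dissipation in the statement. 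Adding $N$ and $L$ and recalling that $\|\partial_{1}\theta\|_{L^{2}}^{2}$ is already an admissible error yields the asserted inequality.

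The main obstacle, as the previous paragraph indicates, is not extracting the dissipation — that drops out of the sign $\partial_{2}\rho_{s}\le-\gamma$ — but controlling the nonlinearity so that every error term is bounded by $\|u_{2}\|_{H^{3}}$, $\|u_{2}\|_{W^{2,\infty}}$ or $\|\partial_{1}\theta\|_{L^{2}}$, that is, by a norm of the velocity (the quantity that will decay in the subsequent argument), rather than by a plain power of $\|\theta\|_{H^{4}}$, which would be useless for the energy-decay scheme. This forces one to exploit $u=\nabla^{\perp}\Psi$, the biharmonic relation $\Delta^{2}\Psi=\partial_{1}\theta$, and Lemma~\ref{lemma_psi_stokes} at essentially every step, and to handle the anisotropy of the transport term with care — which is precisely why one works with $\Delta^{2}$ rather than with $\partial_{1}^{4}$ and $\partial_{2}^{4}$ separately, avoiding the $x_{1}$-average splitting used in Proposition~\ref{energy_IPM_estimate}.
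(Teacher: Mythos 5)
Your treatment of the linear term reproduces the paper's argument (isolate $\partial_2\rho_s\,\Delta^2 u_2=\partial_2\rho_s\,\partial_{11}\theta$, integrate by parts to reach $\int_\Omega\partial_2\rho_s|\partial_1\Delta\theta|^2dx$, absorb the leftovers carrying derivatives of $\partial_2\rho_s$ by interpolation and Young into $\tfrac{\gamma}{2}\|\partial_1\Delta\theta\|_{L^2}^2+C\|\partial_1\theta\|_{L^2}^2$, and bound the commutator by $C\|u_2\|_{H^3}\|\Delta^2\theta\|_{L^2}$), so that part is sound. The genuine gap is in the nonlinear term, exactly where you announce that working with $\Delta^2$ lets you \emph{avoid} the $x_1$-average splitting. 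The single dangerous commutator term is the extreme one in \eqref{Chandler_gone} with all four derivatives on the velocity and none on $\theta$, namely $\int_\Omega\Delta^2u_2\,\partial_2\theta\,\Delta^2\theta\,dx$. Any direct H\"older or Lemma~\ref{Simple_tame}-type bound of it pairs $\|u_2\|_{H^4}\lesssim\|\partial_1\Delta\theta\|_{L^2}$ with $\|\partial_2\theta\|_{L^\infty}\lesssim\|\Delta^2\theta\|_{L^2}$ and produces $\|\partial_1\Delta\theta\|_{L^2}\|\Delta^2\theta\|_{L^2}^2$, which is \emph{not} admissible: by Young it degenerates into $\eta\|\partial_1\Delta\theta\|_{L^2}^2+C_\eta\|\Delta^2\theta\|_{L^2}^4$, and the quartic piece is $\epsilon^2\|\Delta^2\theta\|_{L^2}^2$ with a time-independent coefficient, which cannot be absorbed by the dissipation (the dissipation does not control $\|\Delta^2\theta\|_{L^2}$) and destroys the bootstrap in the proof of Theorem~\ref{main_Stokes}, where every coefficient multiplying $\|\Delta^2\theta\|_{L^2}^2$ must be a decaying, time-integrable velocity norm such as $\|u_2\|_{H^3}+\|u_2\|_{W^{2,\infty}}$. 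Your prescription --- integrate by parts ``whenever a high derivative would otherwise land on $\theta$'' --- targets the harmless terms (those come with $\|u\|_{W^{1,\infty}}\lesssim\|u_2\|_{W^{2,\infty}}$ and need no help), and your attribution of the factor $1-C\|\Delta^2\theta\|_{L^2}$ to ``terms in which a single derivative hits $u$'' is not correct either.

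The paper resolves precisely this term by the device you discard: in Lemma~\ref{energy_lemma_viscous} the vertical part is split as $u_2\partial_2 f=u_2\partial_2(f-\overline f)+u_2\partial_2\overline f$; for the mean-free piece, Lemma~\ref{average_difference} gives $\|\partial_2(f-\overline f)\|_{L^\infty}\le C\|\partial_1\Delta f\|_{L^2}$, so the pairing with $\|u_2\|_{H^4}$ yields the admissible $\|u\|_{H^5}\|\partial_1\Delta f\|_{L^2}\|\Delta^2 f\|_{L^2}$; for the mean piece, the term $\int\Delta^2u_2\,\partial_2\overline f\,\Delta^2f\,dx$ is rewritten via $\Delta^2u_2=\partial_1\Delta^2\Psi$, and the $\partial_1$ plus a full Laplacian are moved by parts (boundary terms vanish because $\partial_2f=\partial_{22}\overline f=0$ on $\partial\Omega$), which again gives $\|u\|_{H^5}\|\partial_1\Delta f\|_{L^2}\|\Delta^2 f\|_{L^2}$; since $\|u\|_{H^5}\le C\|\partial_1\Delta\theta\|_{L^2}$, this is the actual source of the prefactor $1-C\|\Delta^2\theta\|_{L^2}$. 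An unsplit variant of the same manipulation, applied directly to $\int\partial_1\Delta^2\Psi\,\partial_2\theta\,\Delta^2\theta\,dx$ and using $\partial_2\theta=\partial_1\theta=0$ on $\partial\Omega$ to kill the boundary terms, can indeed be made to work, so your plan is repairable; but this step is the heart of the proposition, and as written your argument neither identifies the offending term nor supplies the estimate that controls it, so the stated inequality does not follow from the scheme you describe.
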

\begin{proof}
Using \eqref{Stokes_perturbed1}, we compute
\begin{align}\label{energe1_stokes}
\frac{1}2\frac{d}{dt}\rVert \Delta^2 \theta\rVert_{L^2}^2 = -\int \Delta^2 (u\cdot\nabla \theta)\Delta^2 \theta dx +\int \Delta^2 (-\partial_2\rho_s u_2)\Delta^2 \theta dx
\end{align}
We simplify the linear term first. Let us write
\[
\int \Delta^2 (-\partial_2\rho_s u_2)\Delta^2 \theta dx = \int \left(\Delta^2(-\partial_2\rho_s u_2)-(-\partial_2\rho_s)\Delta^2 u_2\right)\Delta^2 \theta dx + \int (-\partial_2\rho_s)\Delta^2 u_2\Delta^2 \theta dx=: A_1+A_2.
\] 
For $A_1$, the usual tame estimate yields
\begin{align}\label{mioko}
|A_1|&\le_C \rVert \Delta^2(-\partial_2\rho_s u_2)-(-\partial_2\rho_s)\Delta^2 u_2\rVert_{L^2}\rVert\Delta^2\theta\rVert_{L^2}\nonumber\\
&\le_C \left(\rVert \partial_2\rho_s\rVert_{L^\infty}\rVert u_2\rVert_{H^3}+ \rVert \partial_2\rho_s\rVert_{H^4}\rVert u_2\rVert_{L^\infty}\right)\rVert \Delta^2\theta\rVert_{L^2}\nonumber\\
&\le_C\rVert u_2\rVert_{H^3}\rVert \Delta^2\theta\rVert_{L^2}.
\end{align}
For $A_2$, we can write
\begin{align}\label{defiu1sdsxc2}
A_2 &= \int -\partial_2\rho_s \partial_1\Delta^2\Psi \Delta^2\theta dx = \int \partial_2\rho_s\partial_1\theta \partial_1\Delta^2\theta dx =  \int \Delta (\partial_2\rho_s \partial_1\theta) \partial_1\Delta\theta dx\nonumber\\
& = \int \partial_2\rho_s |\partial_1\Delta \theta|^2 dx + \int (\partial_{222}\rho_s\partial_1\theta + 2\partial_{22}\rho_s\partial_{12}\theta)\partial_1\Delta\theta dx=: A_{21}+A_{22},
\end{align}
while $A_{22}$ can be estimated as
\begin{align*}
|A_{22}|&\le_C \rVert \partial_{222}\rho_s\rVert_{L^\infty}\rVert \partial_1\theta\rVert_{H^1}\rVert \partial_1\Delta\theta\rVert_{L^2}\le C\rVert \partial_1\theta \rVert_{H^1}\rVert \partial_1\Delta\theta\rVert_{L^2}\\
&\le_C  \rVert \partial_1\theta\rVert_{L^2}^{1/2}\rVert \partial_1\theta\rVert_{H^2}^{3/2}\le \rVert \partial_1\theta\rVert_{L^2}^{1/2}\rVert \partial_1\Delta\theta\rVert_{L^2}^{3/2} \le \frac{\inf(-\partial_2\rho_s)}{4}A_{21} + C\rVert \partial_1\theta\rVert_{L^2}^2
 \end{align*}
where the last inequality follows from  Young's inequality. Thus, using  \eqref{general_steady_stokes}, we observe that \eqref{defiu1sdsxc2} can be written as 
\[
A_2 \le - \frac{3}{4}\inf(-\partial_2\rho_s)\rVert \partial_1\Delta\theta\rVert_{L^2}^2 + C\rVert \partial_1\theta\rVert_{L^2}^2\le -C\rVert \partial_1\Delta\theta\rVert_{L^2}^2+ C\rVert \partial_1\theta\rVert_{L^2}^2.
\]
Combining this with \eqref{mioko}, we get
\begin{align}\label{dissipative_term_Stokes}
\int \Delta^2 (-\partial_2\rho_s u_2)\Delta^2 \theta dx&\le -C\rVert\partial_1\Delta\theta\rVert_{L^2}^2+C\left( \rVert u_2\rVert_{H^3}\rVert \Delta^2\theta\rVert_{L^2}+\rVert \partial_1\theta\rVert_{L^2}^2\right).
\end{align}
where the last equality follows from Lemma~\ref{lwp_stokes}, which ensures that the integral over $\partial\Omega$ appearing in the integration by parts vanishes. 

  Now we estimate the integral coming from the nonlinear term in \eqref{energe1_stokes}. We derive a necessary estimate in a separate lemma. Indeed, Lemma~\ref{energy_lemma_viscous} gives us that
    \begin{align}\label{nonlinear_stokes_es}
  \left| \int \Delta^2(u\cdot\nabla \theta)\Delta^2\theta dx\right|\le_C  \left(\rVert u_2\rVert_{H^3} +  \rVert u_2\rVert_{W^{2,\infty}}\right)\rVert \Delta^2\theta\rVert_{L^2}^2 +\rVert u\rVert_{H^5}\rVert \partial_1\Delta \theta\rVert_{L^2}\rVert \Delta^2 \theta\rVert_{L^2}.
  \end{align}
 Using Lemma~\ref{lemma_psi_stokes},
 \[
 \rVert u\rVert_{H^5}\le_C \rVert \Psi \rVert_{H^5}\le_C \rVert \partial_1\theta\rVert_{H^1}\le_C \rVert \partial_1\Delta \theta\rVert_{L^2},
 \]  
 where the last inequality follows from Lemma~\ref{bilap}. Plugging this into \eqref{nonlinear_stokes_es}, and combining it with \eqref{dissipative_term_Stokes}, we conclude the proposition.
\end{proof}

\begin{lemma}\label{energy_lemma_viscous}
Let $u$ be a smooth divergence free vector field such that $u=0$ on $\partial \Omega$ and $f$ be a smooth scalar-valued function such that $f=\partial_2 f=\partial_{2}^2 \overline{f}=0$ on $\partial \Omega$ where $\overline{f}:=\frac{1}{2\pi}\int_{\mathbb{T}}f(x_1,x_2)dx_1$. Then, we have
\begin{align*}
&\left| \int_{\Omega}\Delta^2(u\cdot\nabla f)\Delta^2f dx\right|\le_C  \left(\rVert u_2\rVert_{H^3} +  \rVert u_2\rVert_{W^{2,\infty}}\right)\rVert \Delta^2f\rVert_{L^2}^2 +\rVert u\rVert_{H^5}\rVert \partial_1\Delta f\rVert_{L^2}\rVert \Delta^2 f\rVert_{L^2}
\end{align*}
\end{lemma}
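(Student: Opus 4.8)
The plan is to split off the transport part of $\Delta^2(u\cdot\nabla f)$, show it drops after pairing with $\Delta^2 f$, and control the remaining commutator by the Leibniz rule together with anisotropic integrations by parts tailored to the boundary conditions.

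First I would write $\Delta^2(u\cdot\nabla f)=u\cdot\nabla(\Delta^2 f)+[\Delta^2,u\cdot\nabla]f$. Using $\nabla\cdot u=0$ and $u=0$ on $\partial\Omega$,
\[
\int_\Omega u\cdot\nabla(\Delta^2 f)\,\Delta^2 f\,dx=\tfrac12\int_\Omega u\cdot\nabla\big((\Delta^2 f)^2\big)\,dx=-\tfrac12\int_\Omega(\nabla\cdot u)\,(\Delta^2 f)^2\,dx=0,
\]
so only $\int_\Omega[\Delta^2,u\cdot\nabla]f\,\Delta^2 f\,dx$ remains. Expanding $[\Delta^2,u\cdot\nabla]f$ by the Leibniz rule for $\Delta^2$ gives a finite linear combination of terms $\partial^\alpha u_j\,\partial^\beta\partial_j f$ with $1\le|\alpha|\le4$ and $|\alpha|+|\beta|=4$; in particular at most four derivatives ever fall on either copy of $f$. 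Since $f$ and $\nabla f$ vanish on the flat boundary $\partial\Omega$, Lemma~\ref{lemma_psi_stokes} applied to $f$ itself (with $k=0$) gives $\|f\|_{H^4}\le C\|\Delta^2 f\|_{L^2}$, and the same kind of argument applied to $\partial_1 f$ gives $\|\partial_1 f\|_{H^2}\le C\|\partial_1\Delta f\|_{L^2}$ (cf.\ Lemma~\ref{bilap}); I will use these repeatedly, both to turn $H^4$-norms of $f$ into $\|\Delta^2 f\|_{L^2}$ and to extract the quantity $\|\partial_1\Delta f\|_{L^2}$.

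Next I would organize the commutator terms by $|\alpha|$. When $|\alpha|\le2$ the factor $\partial^\beta\partial_j f$ carries at least two derivatives, so I put $\partial^\alpha u_j$ in $L^\infty$; whenever the multi-index $\alpha$ contains a horizontal derivative of $u_1$, incompressibility $\partial_1 u_1=-\partial_2 u_2$ converts $\partial^\alpha u_1$ into a derivative of $u_2$ of the same order, so that its $L^\infty$-norm is $\le\|u_2\|_{W^{2,\infty}}$; the few pure-vertical derivatives $\partial_2^m u_1$ that remain come paired with a factor $\partial^\beta\partial_1 f$ carrying an $x_1$-derivative, which I integrate by parts in $x_1$ (periodic, no boundary term) onto $\Delta^2 f$. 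When $|\alpha|\in\{3,4\}$ the low-order $f$-factor has at most two derivatives and I integrate by parts to move derivatives off $u$: an $x_1$-derivative moves for free, whereas an $x_2$-derivative of $u_2$ moves at the cost of a boundary integral which vanishes because $\partial_2 f=0$ on $\partial\Omega$; the pieces that still carry an $x_1$-derivative of $f$ are then bounded by $\|u\|_{H^5}\|\partial_1\Delta f\|_{L^2}\|\Delta^2 f\|_{L^2}$, and the rest by $\|u_2\|_{H^3}\|\Delta^2 f\|_{L^2}^2$.

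The delicate case is the top-order term in which all four derivatives of $\Delta^2$ land on $u_2$ while the surviving factor is $\partial_2 f$ (schematically $\int_\Omega\partial_2^4 u_2\,\partial_2 f\,\Delta^2 f\,dx$), since here there is no $x_1$-derivative to integrate by parts. For this I would decompose $f=\overline f+(f-\overline f)$ with $\overline f(x_2)=\frac1{2\pi}\int_\mathbb{T}f\,dx_1$: for the oscillating part, Lemma~\ref{average_difference} applied to $\partial_2 f$ gives $\|\partial_2(f-\overline f)\|_{L^\infty}\le C\|\partial_1\partial_2 f\|_{H^1}\le C\|\partial_1\Delta f\|_{L^2}$, placing this contribution in the $\|u\|_{H^5}\|\partial_1\Delta f\|_{L^2}\|\Delta^2 f\|_{L^2}$ term; for the mean part the relevant $f$-factors $\partial_2\overline f$ and (after writing $\Delta^2 f=\partial_2^4\overline f+\Delta^2(f-\overline f)$) $\partial_2^4\overline f$ are purely one-dimensional, and here I would integrate by parts in $x_2$---boundary terms again disappearing by $\partial_2 f|_{\partial\Omega}=0$---and close using the one-dimensional ``hinged'' estimate $\|\overline f\|_{H^4}\le C\|\partial_2^4\overline f\|_{L^2}\le C\|\Delta^2 f\|_{L^2}$, which rests on $\overline f=\partial_2^2\overline f=0$ on $\partial\Omega$ and on $\partial_2\overline f$, $\partial_2^3\overline f$ having zero mean over $(0,1)$. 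The main obstacle is exactly this anisotropic bookkeeping: arranging the integrations by parts so that no boundary term survives and no copy of $f$ is pushed beyond four derivatives---this is where the incompressibility identity, the conditions $\partial_2 f=\partial_2^2\overline f=0$ on $\partial\Omega$, and the mean/oscillating split all enter---after which collecting all the contributions and invoking $\|f\|_{H^4}\le C\|\Delta^2 f\|_{L^2}$ once more yields the stated inequality.
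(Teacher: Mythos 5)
Your opening steps coincide with the paper's proof: removing the pure transport part via incompressibility and the no-slip condition, reducing to the commutator, invoking $\|f\|_{H^4}\le C\|\Delta^2 f\|_{L^2}$ and $\|\partial_1 f\|_{H^2}\le C\|\partial_1\Delta f\|_{L^2}$ (Lemma~\ref{bilap}), and splitting $f=\overline f+(f-\overline f)$ with Lemma~\ref{average_difference} for the oscillating part are all exactly what the paper does. The genuine gap is in your treatment of the mean part of the dangerous top-order term, i.e.\ $\int_\Omega \partial_2^4 u_2\,\partial_2\overline f\,\Delta^2 f\,dx$ (the paper's $I_{221}$). Integrating by parts in $x_2$ there cannot work as described: every $\partial_2$ you move off $u_2$ lands on the $f$-factors and either pushes one copy of $f$ to five derivatives, or, if you exploit the perfect-derivative structure $\partial_2\overline f\,\partial_2^4\overline f=\partial_2\bigl(\partial_2\overline f\,\partial_2^3\overline f-\tfrac12(\partial_2^2\overline f)^2\bigr)$, you end with a bound of the type $\|u\|_{H^5}\|\Delta^2 f\|_{L^2}^2$ (equivalently $\|u_2\|_{H^4}\|\Delta^2 f\|_{L^2}^2$). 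That is not an admissible right-hand side: only $\|u_2\|_{H^3}+\|u_2\|_{W^{2,\infty}}$ may multiply $\|\Delta^2 f\|_{L^2}^2$, and this matters, because in Proposition~\ref{Stokes_perturb2ed1} and the bootstrap the factor $\|u\|_{H^5}$ is only ever absorbed against the coercive term $\|\partial_1\Delta \theta\|_{L^2}^2$; a term $\|u\|_{H^5}\|\Delta^2\theta\|_{L^2}^2$ would not close the argument. Your mean--oscillating piece $\int\partial_2^4u_2\,\partial_2\overline f\,\Delta^2(f-\overline f)\,dx$ is likewise out of reach of the tools you list, since $\|\Delta^2(f-\overline f)\|_{L^2}$ is not controlled by $\|\partial_1\Delta f\|_{L^2}$ and $\|\partial_2^4u_2\|_{L^2}$ is not controlled by $\|u_2\|_{H^3}$.

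The missing idea is the stream-function maneuver the paper uses at exactly this point: since $u$ is divergence free with $u=0$ on $\partial\Omega$, there is $\Psi$ with $u=\nabla^\perp\Psi$ and $\Psi=\nabla\Psi=0$ on $\partial\Omega$, so $\Delta^2 u_2=\partial_1\Delta^2\Psi$; the $\partial_1$ is integrated by parts (periodic, no boundary term) onto $\Delta^2 f$, and then one full Laplacian is moved back onto $\Delta^2\Psi\,\partial_2\overline f$, the boundary contributions vanishing precisely because $\partial_2\overline f=0$ and $\partial_{22}\overline f=0$ on $\partial\Omega$. This yields $\bigl|\int_\Omega\Delta(\Delta^2\Psi\,\partial_2\overline f)\,\partial_1\Delta f\,dx\bigr|\le C\|\Psi\|_{H^6}\|\overline f\|_{W^{3,\infty}}\|\partial_1\Delta f\|_{L^2}\le C\|u\|_{H^5}\|\Delta^2 f\|_{L^2}\|\partial_1\Delta f\|_{L^2}$, which is what produces the second admissible term; note that the hypothesis $\partial_2^2\overline f=0$ on $\partial\Omega$ is used exactly here, whereas your sketch never uses it in a way that closes an estimate. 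The same caution applies to your $|\alpha|\in\{3,4\}$ cases: ``moving derivatives off $u$'' necessarily places a fifth derivative on the factor $\Delta^2 f$, and without the subsequent Laplacian integration by parts (justified by the above boundary conditions) there is no mechanism to come back down to $\partial_1\Delta f$. As written, this step, and hence the proposal, has a genuine gap.
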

\begin{proof}
We split the integral $\int_{\Omega} \Delta^2(u\cdot\nabla f)\Delta^2f dx$  as
 \begin{align*}
 \int_{\Omega} \Delta^2(u\cdot\nabla f)\Delta^2f dx &= \int_{\Omega} \left(\Delta^2(u\cdot\nabla f) - u\cdot\nabla\Delta^2f\right)\Delta^2\theta + \int_{\Omega}u\cdot\nabla \left(\frac{1}{2}(\Delta^2f)^2\right) dx\\
 & =\int_{\Omega} \left(\Delta^2(u\cdot\nabla f ) - u\cdot\nabla\Delta^2\theta\right)\Delta^2f dx,
 \end{align*}
 where the last integral vanishes due to the incompressibility of $u$ and the boundary condition $u=0$ on $\Omega$. Next, we further decompose the last integral as
 \begin{align}\label{I2es}
 \int_{\Omega} \Delta^2(u\cdot\nabla f)\Delta^2f dx & = \int_{\Omega} \left(\Delta^2(u\cdot\nabla f) - u\cdot\nabla\Delta^2f\right)\Delta^2fdx\nonumber\\
 &  = \int_{\Omega}\left(\Delta^2(u_1\partial_1f) - u_1\Delta^2\partial_1f\right)\Delta^2f dx +  \int_{\Omega}\left(\Delta^2(u_2\partial_2f) - u_2\Delta^2\partial_2f\right)\Delta^2f dx \nonumber\\
 &=: I_1 + I_2.
 \end{align}

 \textbf{Estimate for $I_1$.} Using the Cauchy-Schwarz inequality, we estimate
 \[
 |I_1|\le \rVert \Delta^2(u_1\partial_1f) - u_1\Delta^2\partial_1f\rVert_{L^2}\rVert \Delta^2f\rVert_{L^2}.
 \]
 Using Lemma~\ref{Simple_tame}, we have
 \begin{align*}
 \rVert \Delta^2(u_1\partial_1f) - u_1\Delta^2\partial_1f\rVert_{L^2}&\le \rVert u_1\rVert_{H^4}\rVert \partial_1f\rVert_{L^\infty} +\rVert u_1\rVert_{W^{1,\infty}}\rVert \partial_1f\rVert_{H^3}\\
 &\le \rVert u_1\rVert_{H^4}\rVert \partial_1f\rVert_{H^2} +\rVert u_1\rVert_{W^{1,\infty}}\rVert \partial_1f\rVert_{H^3}.
 \end{align*}
where the second inequality is due to the Sobolev embedding $H^2(\Omega)\hookrightarrow L^\infty(\Omega)$. Then, using Lemma~\ref{bilap}, we can replace $\rVert \partial_1f\rVert_{H^2} $ and $\rVert \partial_1f\rVert_{H^3}$ in the above estimate by $\rVert \partial_1\Delta f\rVert_{L^2}$ and $\rVert \Delta^2 f\rVert_{L^2}$, respectively.
 Hence, we obtain
 \begin{align}\label{I_1es}
 |I_1|\le \left( \rVert u_1\rVert_{H^4}\rVert \partial_1\Delta f\rVert_{L^2} +\rVert u_1\rVert_{W^{1,\infty}}\rVert \Delta^2 f\rVert_{L^2}\right)\rVert \Delta^2f\rVert_{L^2}.
\end{align}

 Before we start estimating  $I_2$, we consider the structure of the bi-Laplacian acting on a product of two functions. Expanding the bi-Laplacian using the formula $\Delta(fg)=\Delta g h + 2\nabla g\cdot\nabla h + g\Delta h$, we have 
 \begin{align}\label{Chandler_gone}
\Delta^2(gh)-f\Delta^2h &= \Delta (\Delta gh + 2\nabla g\cdot\nabla h +g\Delta h) - g\Delta^2 h\nonumber\\
& = \Delta^2 g h + 2\nabla \Delta g \cdot \nabla h + \Delta g\Delta h \nonumber\\
& \ + 2\left(\nabla \Delta g\cdot \nabla h+ \nabla\partial_1 g\cdot \nabla\partial_1h + \nabla\partial_2g\cdot\nabla\partial_2h + \nabla g\cdot\nabla\Delta h \right)\nonumber\\
& \ + \Delta g\Delta h + \nabla g\cdot\nabla\Delta h \nonumber\\
& =\Delta^2g h + 4\nabla\Delta g\cdot \nabla h + 2{\left(\nabla\partial_1 g\cdot \nabla\partial_1h + \nabla\partial_2g\cdot\nabla\partial_2h + \Delta g\cdot\Delta h \right)} + \nabla g\cdot \nabla\Delta h .
\end{align}

 \textbf{Estimate for $I_2$.} 
Next, we move on to estimate $I_2$ in \eqref{I2es}. Recalling our notation that $
\overline{f}(x_2):=\frac{1}{2\pi}\int_{\mathbb{T}}f(x_1,x_2)dx_2,$
we split $I_2$ as 
\begin{align}\label{I_2split}
I_2 &= \int_{\Omega}\left(\Delta^2(u_2\partial_2f) - u_2\Delta^2\partial_2f\right)\Delta^2f dx \nonumber\\
& = \int_{\Omega}\left(\Delta^2(u_2\partial_2(f-\overline{f})) - u_2\Delta^2\partial_2(f-\overline{f})\right)\Delta^2f dx + \int_{\Omega}\left(\Delta^2(u_2\partial_2\overline{f}) - u_2\Delta^2\partial_2\overline{f}\right)\Delta^2{f} dx\nonumber\\
& =: I_{21}+ I_{22}.
\end{align}
Again, the Cauchy-Schwarz inequality gives us
\[
|I_{21}|\le \rVert \Delta^2(u_2(f-\overline{f})) - u_2\Delta^2\partial_2(f-\overline{f})\rVert_{L^2}\rVert\Delta^2f\rVert_{L^2},
\]
while  Lemma~\ref{Simple_tame} yields
\begin{align*}
\rVert \Delta^2(u_2(f-\overline{f})) - u_2\Delta^2\partial_2(f-\overline{f})\rVert_{L^2}&\le_C \rVert u_2\rVert_{H^4}\rVert \partial_2(f-\overline{f})\rVert_{L^\infty} + \rVert  u_2\rVert_{W^{1,\infty}}\rVert f-\overline{f}\rVert_{H^4}\\
&\le_C \rVert u_2\rVert_{H^4}\rVert \partial_{12}f\rVert_{{H}^1} + \rVert  u_2\rVert_{W^{1,\infty}}\rVert f\rVert_{H^4}\\
&\le_C \rVert u_2\rVert_{H^4}\rVert \partial_{1}\Delta f\rVert_{L^2} + \rVert  u_2\rVert_{W^{1,\infty}}\rVert \Delta^2f\rVert_{L^2}.
\end{align*}
where the second inequality is due to  $\rVert \overline{f}\rVert_{H^4(\Omega)}\le \rVert f\rVert_{H^4(\Omega)}$ and Lemma~\ref{average_difference}, and the last inequality follows from  Lemma~\ref{bilap}. Hence, we get
\begin{align}\label{livinginnyc}
|I_{21}|\le \left(\rVert u_2\rVert_{H^4}\rVert \partial_{1}\Delta f\rVert_{L^2} + \rVert  u_2\rVert_{W^{1,\infty}}\rVert \Delta^2f\rVert_{L^2}\right)\rVert \Delta^2 f\rVert_{L^2}.
\end{align}

 Next, we estimate $I_{22}$.  Recalling $I_{22}$ from \eqref{I_2split} and applying \eqref{Chandler_gone} with $g=u_2, \ h=\partial_2\overline{f}$, we have
 \begin{align}\label{I_222}
 I_{22}&= \int_{\Omega} \Delta^2 u_2 \partial_2\overline{f}\Delta^2f dx \nonumber\\
 &+ \int_{\Omega}(4\nabla\Delta u_2\cdot\nabla\partial_2\overline{f} +2\left( \nabla \partial_1u_2\cdot\nabla\partial_{12}\overline{f} + \nabla\partial_2u_2\cdot\nabla\partial_{22}\overline{f}+\Delta u_2\Delta\partial_2\overline{f} + \nabla u_2\cdot\nabla\Delta\partial_2\overline{f}\right))\Delta^2f dx\nonumber\\
 & = \int_{\Omega} \Delta^2 u_2 \partial_2\overline{f}\Delta^2f dx +\int_{\Omega}(4\partial_2\Delta u_2\cdot\partial_{22}\overline{f} +2\left((\partial_{22}u_2+\Delta u_2)\cdot\partial_{222}\overline{f} + \partial_2 u_2\cdot\partial_{2222}\overline{f}\right))\Delta^2f dx\nonumber \\
 &=: I_{221}+ I_{222},
 \end{align}
 where the second last equality is due to the fact that $\overline{f}$ does not depend on the variable $x_1$.
 For $I_{222}$, using the Cauchy-Schwarz inequality, we just need a crude estimate,
 \begin{align}\label{I_2222}
 |I_{222}|&\le_C \left(\rVert u_2\rVert_{H^3(\Omega)}\rVert \partial_{22}\overline{f}\rVert_{L^\infty}+ \rVert u_2\rVert_{W^{2,\infty}}\rVert f\rVert_{H^4}\right)\rVert \Delta^2f\rVert_{L^2}\nonumber\\
 & \le_C \left(\rVert u_2\rVert_{H^3} +  \rVert u_2\rVert_{W^{2,\infty}}\right)\rVert f\rVert_{H^4}\rVert \Delta^2f\rVert_{L^2}\nonumber\\
 &\le_C  \left(\rVert u_2\rVert_{H^3} +  \rVert u_2\rVert_{W^{2,\infty}}\right)\rVert \Delta^2f\rVert_{L^2}^2,
  \end{align}
 where we used the Sobolev embedding $L^\infty([0,1])\hookrightarrow H^1([0,1])$ to have $\rVert \partial_{22}\overline{f}\rVert_{L^\infty}\le \rVert {f}\rVert_{H^4}$ in the the second inequality, and we used Lemma~\ref{bilap} in the last inequality.

 Lastly, we estimate $I_{221}$. Let $\Psi$ be the stream function of $u$ such that 
 \[
 \int_{\Omega}\Psi dx=0,\quad u=\nabla^{\perp}\Psi\text{ with }\Psi=\nabla\Psi=0 \text{ on $\partial\Omega$}.
 \] Indeed, such a stream function exists since $u$ is divergence free and, especially, $u=0$ on $\partial\Omega$. Using the integration by parts, we get
\begin{align*}
I_{221} & =\int_{\Omega}\partial_1 \Delta^2\Psi\partial_2\overline{f}\Delta^2 f dx = -\int \left(\Delta^2 \Psi \partial_2\overline{f}\right)\Delta^2\partial_1f dx =-\int \Delta\left(\Delta^2 \Psi \partial_2\overline{f}\right)\Delta\partial_1f dx,
\end{align*}
where the integration by parts in the last equality is justified by the assumption that $\partial_2\theta =\partial_{22}\overline{f}=0$ on $\partial \Omega$. Therefore, expanding $\Delta(\Delta^2\Psi \partial_2\overline{f})$, we get
\begin{align*}
I_{221} & = -\int (\Delta^3\Psi \partial_2\overline{f} + 2\partial_2\Delta^2\Psi \cdot\partial_{22}\overline{f} + \Delta^2\Psi \partial_{222}\overline{f})\partial_1\Delta\theta dx.
\end{align*}
Then,  we obtain
\begin{align*}
|I_{221}|&\le \rVert \Psi\rVert_{H^6}\rVert \overline{f}\rVert_{W^{3,\infty}}\rVert \partial_1\Delta f\rVert_{L^2} \le \rVert u\rVert_{H^5}\rVert \partial_1\Delta f\rVert_{L^2}\rVert \overline{f}\rVert_{H^4}\le \rVert u\rVert_{H^5}\rVert \partial_1\Delta f\rVert_{L^2}\rVert \Delta^2 f\rVert_{L^2},
\end{align*}
where we used Lemma~\ref{bilap} in the last inequality. Combining this with \eqref{I_2222} and \eqref{I_222}, we get
\[
|I_{22}|\le  \left(\rVert u_2\rVert_{H^3} +  \rVert u_2\rVert_{W^{2,\infty}}\right)\rVert \Delta^2f\rVert_{L^2}^2 +\rVert u\rVert_{H^5}\rVert \partial_1\Delta f\rVert_{L^2}\rVert \Delta^2 f\rVert_{L^2}.
\]
Combining this with \eqref{livinginnyc} and \eqref{I_2split}, we have
\[
|I_2|\le\left(\rVert u_2\rVert_{H^3} +  \rVert u_2\rVert_{W^{2,\infty}}\right)\rVert \Delta^2f\rVert_{L^2}^2 +\rVert u\rVert_{H^5}\rVert \partial_1\Delta f\rVert_{L^2}\rVert \Delta^2 f\rVert_{L^2}.
\]
Combining this with \eqref{I_1es} and \eqref{I2es}, we get
\[
\left|\int_{\Omega} \Delta^2(u\cdot\nabla \theta)\Delta^2\theta dx \right|\le \left(\rVert u_2\rVert_{H^3} +  \rVert u_2\rVert_{W^{2,\infty}}\right)\rVert \Delta^2f\rVert_{L^2}^2 +\rVert u\rVert_{H^5}\rVert \partial_1\Delta f\rVert_{L^2}\rVert \Delta^2 f\rVert_{L^2}.
\]
This finishes the proof.
\end{proof}

\subsection{Analysis of the energy structure}As in the previous section for the IPM equation, we define
 \begin{align}\label{Stokes_potential}
 E(t):= \int_{\Omega}(\rho(t)-\rho_0^*)x_2dx,\quad K(t):=\rVert \Delta \Psi(t)\rVert_{L^2}^2,
 \end{align}
 where $\rho_0^*$ is the vertical rearrangement of the initial density $\rho_0$. Throughout this subsection, we will assume that
 \begin{align}\label{assumption_delta_stokes}
 \rVert \theta(t)\rVert_{H^3}^2+  \int_0^T \rVert \Delta^2 \Psi(t)\rVert_{H^2}^2 dt\le \delta\le \delta_0\ll 1, \text{ for  $t\in[0,T]$ for some $T>0$ and $\delta_0>0$}.
 \end{align}
\begin{proposition}\label{main_Stokes_decay}
There exists $\delta_0=\delta_0(\gamma,\rVert \rho_s\rVert_{H^4})>0$ such that if \eqref{assumption_delta_stokes} holds, then 
 \begin{align}
\frac{d}{dt}E(t)= -K(t),\quad \frac{d}{dt}K(t)\le -C \rVert u_2\rVert_{L^2}^2. \label{officeteddy3_stokes}
 \end{align}
 \end{proposition}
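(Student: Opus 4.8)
The plan is to follow the blueprint of Proposition~\ref{main_IPM_decay}, replacing the Poisson operator $-\Delta$ by the biharmonic operator $\Delta^2$ and using Lemma~\ref{lemma_psi_stokes} in place of Lemma~\ref{lemma_psi}. For the first identity I differentiate $E(t)=\int_\Omega(\rho-\rho_0^*)x_2\,dx$ in time, use the transport equation together with $\nabla\cdot u=0$ and $u\cdot\vec n=0$ to get $\frac{d}{dt}E(t)=\int_\Omega u_2\rho\,dx=\int_\Omega\partial_1\Psi\,\rho\,dx=-\int_\Omega\Psi\,\partial_1\rho\,dx=-\int_\Omega\Psi\,\Delta^2\Psi\,dx$, and then integrate by parts twice. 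Every boundary contribution carries a factor of $\Psi$ or $\partial_n\Psi$, both of which vanish by the clamped conditions $\Psi=\nabla\Psi=0$ on $\partial\Omega$, so this equals $-\int_\Omega|\Delta\Psi|^2\,dx=-K(t)$.

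For the second identity I differentiate once more, $\tfrac12(\tfrac{d}{dt})^2E(t)=\tfrac12\int_\Omega u_2\rho_t\,dx+\tfrac12\int_\Omega\partial_tu_2\,\rho\,dx$, and the key algebraic point (as in the IPM case) is that the two terms coincide: writing $u_2=\partial_1\Psi$ and $\partial_1\rho=\partial_1\theta=\Delta^2\Psi$ and integrating by parts, both $\int_\Omega\partial_tu_2\,\rho\,dx$ and $\int_\Omega u_2\rho_t\,dx$ reduce to $-\int_\Omega\Delta\Psi\,\Delta\partial_t\Psi\,dx$, all boundary terms vanishing since the clamped conditions $\Psi=\nabla\Psi=0$ on $\partial\Omega$ persist in time (so $\partial_t\Psi=\partial_n\partial_t\Psi=0$ on $\partial\Omega$). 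Hence $\tfrac12(\tfrac{d}{dt})^2E(t)=\int_\Omega u_2\rho_t\,dx=-\int_\Omega u_2(u\cdot\nabla\rho)\,dx=-\int_\Omega u_2^2\,\partial_2\rho\,dx-\int_\Omega u_1u_2\,\partial_1\rho\,dx$. For the first term I split $\partial_2\rho=\partial_2\rho_s+\partial_2\theta$; the coercivity $-\partial_2\rho_s\ge\gamma$ from \eqref{general_steady_stokes} gives $-\int_\Omega u_2^2\partial_2\rho_s\,dx\ge\gamma\rVert u_2\rVert_{L^2}^2$, while $\bigl|\int_\Omega u_2^2\partial_2\theta\,dx\bigr|\le\rVert\partial_2\theta\rVert_{L^\infty}\rVert u_2\rVert_{L^2}^2\le C\rVert\theta\rVert_{H^3}\rVert u_2\rVert_{L^2}^2\le C\sqrt{\delta_0}\,\rVert u_2\rVert_{L^2}^2$ by the $2$D embedding $H^3(\Omega)\hookrightarrow W^{1,\infty}(\Omega)$ and \eqref{assumption_delta_stokes}.

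The only genuinely new point, and the main obstacle, is the cross term $\int_\Omega u_1u_2\,\partial_1\rho\,dx$, which must be shown to be $o(\rVert u_2\rVert_{L^2}^2)$; in particular one must extract a factor $\rVert u_2\rVert_{L^2}=\rVert\partial_1\Psi\rVert_{L^2}$ rather than an uncontrolled $\rVert\partial_1\theta\rVert_{L^2}$. I bound $\bigl|\int_\Omega u_1u_2\,\partial_1\rho\,dx\bigr|\le\rVert u_2\rVert_{L^2}\rVert u_1\partial_1\rho\rVert_{L^2}$ and, using $u_1=-\partial_2\Psi$ and $\partial_1\rho=\Delta^2\Psi$, estimate $\rVert u_1\partial_1\rho\rVert_{L^2}=\rVert\partial_2\Psi\,\Delta^2\Psi\rVert_{L^2}\le\rVert\partial_2\Psi\rVert_{L^4}\rVert\Delta^2\Psi\rVert_{L^4}\le C\rVert\Psi\rVert_{H^6}\rVert\Psi\rVert_{L^2}$ by the Gagliardo--Nirenberg interpolation inequality (the scaling balances, just as in the corresponding step of Proposition~\ref{main_IPM_decay}). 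By Lemma~\ref{lemma_psi_stokes} with $k=2$, $\rVert\Psi\rVert_{H^6}\le C\rVert\partial_1\theta\rVert_{H^2}\le C\rVert\theta\rVert_{H^3}\le C\sqrt{\delta_0}$. To control $\rVert\Psi\rVert_{L^2}$ I note, exactly as in the IPM proof, that $g(x_2):=\int_{\mathbb T}\Psi(x_1,x_2)\,dx_1$ satisfies $\partial_2^4g=\int_{\mathbb T}\Delta^2\Psi\,dx_1=\int_{\mathbb T}\partial_1\theta\,dx_1=0$ together with $g(0)=g(1)=g'(0)=g'(1)=0$, hence $g\equiv0$; thus $x_1\mapsto\Psi(x_1,x_2)$ has zero horizontal mean and Poincaré's inequality gives $\rVert\Psi\rVert_{L^2}\le C\rVert\partial_1\Psi\rVert_{L^2}=C\rVert u_2\rVert_{L^2}$. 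Combining the three pieces yields $\tfrac12(\tfrac{d}{dt})^2E(t)\ge(\gamma-C\sqrt{\delta_0})\rVert u_2\rVert_{L^2}^2\ge\tfrac\gamma4\rVert u_2\rVert_{L^2}^2$ for $\delta_0$ small depending on $\gamma$ and $\rVert\rho_s\rVert_{H^4}$, and since $(\tfrac{d}{dt})^2E(t)=-\tfrac{d}{dt}K(t)$ this is the asserted bound $\tfrac{d}{dt}K(t)\le-C\rVert u_2\rVert_{L^2}^2$. As in the IPM case, these formal time differentiations are justified on the smooth solutions furnished by Theorem~\ref{lwp_stokes}, the general case following by the density/approximation argument used at the end of the corresponding section.
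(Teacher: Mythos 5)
Your proposal is correct and follows essentially the same route as the paper's proof: the first identity via the clamped biharmonic integration by parts, the symmetry of the two terms in the second derivative of $E$, the coercivity from $-\partial_2\rho_s\ge\gamma$ plus the $H^3\hookrightarrow W^{1,\infty}$ smallness of $\partial_2\theta$, and the cross-term bound combining Gagliardo--Nirenberg with the observation that $\int_{\mathbb T}\Psi\,dx_1\equiv 0$ (so Poincar\'e yields $\rVert\Psi\rVert_{L^2}\le C\rVert u_2\rVert_{L^2}$). The only cosmetic difference is that you bound the cross term by $\rVert\Psi\rVert_{H^6}\rVert\Psi\rVert_{L^2}$ where the paper writes $\rVert\Delta^2\Psi\rVert_{H^2}\rVert\Psi\rVert_{L^2}$, which is the same estimate.
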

 
\begin{proof}Differentiating the energy difference $E(t)$ in time, we see that
 \begin{align}\label{potential_derivative1}
 \frac{d}{dt}E(t) = \int_{\Omega}\rho_t x_2dx = -\int_{\Omega}u\cdot \nabla \rho x_2dx = \int_{\Omega}u_2\rho dx.
 \end{align}
 Using \eqref{Stokes_stream}, we can further simplify the last expression as
 \[
  \int_{\Omega}u_2\rho dx = \int_{\Omega}\partial_1\Psi \rho dx = -\int_{\Omega}\Psi \partial_1\rho dx =- \int_{\Omega}\Psi \Delta^2 \Psi dx = -\int_{\Omega}|\Delta \Psi|^2 dx,
 \]
 therefore, we get 
 \begin{align}\label{energy_decay_Stokes}
  \frac{d}{dt}E(t) =-\int_{\Omega}|\Delta \Psi|^2 dx = -K(t).
 \end{align}
In order to estimate $\frac{d}{dt}K(t)$, taking a derivative in \eqref{potential_derivative1}, we compute
\[
\frac{1}{2}\left(\frac{d}{dt}\right)^2 E(t) =\frac{1}{2} \frac{d}{dt}\int_{\Omega} u_2\rho dx = \frac{1}{2}\int_{\Omega} u_2 \rho_t dx + \frac{1}{2}\int_{\Omega}\partial_tu_2 \rho dx.
\]
Using $u_2=\partial_1\Psi$ and $\partial_1\rho=\Delta^2\Psi$, we have
\[
\int_{\Omega}\partial_tu_2 \rho dx = -\int_{\Omega}\partial_t\Psi \partial_1\rho dx =- \int_{\Omega}\partial_t\Psi \Delta^2 \Psi dx=- \int_{\Omega}\partial_t\Delta^2 \Psi \Psi dx = -\int_{\Omega}{\partial_t}\partial_1\rho \Psi dx = \int_{\Omega}\partial_t\rho u_2 dx
\]
Therefore, we get
\begin{align*}
\frac{1}{2}\left(\frac{d}{dt}\right)^2 E(t) &= \int_{\Omega}u_2\rho_t = -\int_{\Omega} u_2 u \cdot \nabla \rho dx = -\int_{\Omega}u_2^2\partial_2\rho dx - \int_{\Omega}u_2u_1\partial_1\rho dx\\
&\ge -\int_{\Omega}u_2^2\partial_2\rho dx - \rVert u_2\rVert_{L^2}\rVert u_1\partial_1\rho\rVert_{L^2}.
\end{align*}
Using the assumption on $\rVert \theta\rVert_{H^3}$ in \eqref{assumption_delta_stokes}, we have 
\[
-\int_{\Omega}u_2^2\partial_2\rho dx = -\int u_2^2 \partial_2\rho_s dx - \int u_2^2 \partial_2(\rho-\rho_s)dx\ge \gamma \rVert u_2\rVert_{L^2}^2 -\sqrt{\delta_0}\rVert u_2\rVert_{L^2}^2\ge  C\rVert u_2\rVert_{L^2}^2,
\]
where we used the strict positivity of $\gamma$ which is assumed in \eqref{general_steady_stokes}.
Using \eqref{energy_decay_Stokes}, we see that the above inequality implies
\begin{align}\label{rkawksmd_stokes}
\frac{d}{dt}K(t) + C\rVert u_2\rVert_{L^2}^2 \le C\rVert u_2\rVert_{L^2}\rVert u_1\partial_1\rho\rVert_{L^2}.
\end{align}
Let us estimate $\rVert u_1\partial_1\rho\rVert_{L^2}^2$. Using \eqref{Stokes_stream}, we rewrite
\begin{align}\label{llap_stokes}
\rVert u_1\partial_1\rho\rVert_{L^2(\Omega)} = \rVert \partial_2\Psi\Delta^2\Psi\rVert_{L^2}\le \rVert \nabla\Psi\rVert_{L^4}\rVert \Delta^2\Psi\rVert_{L^4} \le \rVert \Delta^2\Psi\rVert_{H^2}\rVert \Psi\rVert_{L^2},
\end{align}
where the last inequality is due to the Gagliardo-Nirenberg interpolation theorem. Moreover, we notice from \eqref{Stokes_stream} that  $g(x_2):=\int_{\mathbb{T}}\Psi(x_1,x_2)dx_1$ satisfies
\begin{align*}
\partial_{2222}g(x_2) &= \int_{\mathbb{T}}\partial_{2222}\Psi(x_1,x_2)dx_1 = \int_{\mathbb{T}}\Delta^2 \Psi(x_1,x_2) - (\partial_{1111}+2\partial_{1122})\Psi (x_1,x_2)dx_1=\int_{\mathbb{T}}\partial_1\rho dx=0, 
\end{align*}
with the boundary condition, $g=\partial_2 g=0$ on $\partial\Omega$. Therefore, $g=0$ for all $x_2\in [0,1]$. In other words, the map $x_1\to \Psi(x_1,x_2)$ has zero average for each fixed $x_2$. Then, the Poincar\'e inequality tells us
\[
\rVert \Psi\rVert_{L^2}\le \rVert \partial_1\Psi\rVert_{L^2}=\rVert u_2\rVert_{L^2}.
\] 
Substituting this into \eqref{llap_stokes}, we get
\[
\rVert u_1\partial_1\rho\rVert_{L^2}\le \rVert \Delta^2\Psi\rVert_{H^2}\rVert u_2\rVert_{L^2}\le \rVert \rho-{\rho}_s\rVert_{H^3}\rVert u_2\rVert_{L^2}\le \sqrt{\delta_0} \rVert u_2\rVert_{L^2},
\]
where the second inequality follows from Lemma~\ref{lemma_psi_stokes} (noting that $\Delta^2\Psi = \partial_1(\rho-\rho_s)$ and $\rho-\rho_s=0$ on $\partial\Omega$) and the last inequality follows from \eqref{assumption_delta_stokes}. Plugging this into \eqref{rkawksmd_stokes}, we obtain that for sufficiently small $\delta>0$, 
$\frac{d}{dt}K(t)\le - C\rVert u_2\rVert_{L^2}^2.$
Combining this with \eqref{energy_decay_Stokes}, we finish the proof of the proposition.
 \end{proof}
 
  \begin{corollary}\label{Cor_Stokes}
  There exists $\delta_0=\delta_0(\gamma,\rVert \rho_s\rVert_{H^4})>0$ such that if \eqref{assumption_delta_stokes} holds, then,
  \begin{align*}
 E(t)&\le  \frac{C\delta}{t^2},\\
  \frac{2}{t}\int_{t/2}^{t} K(s)ds&\le \frac{C\delta}{t^{3}}, \\
   \frac{2}{t}\int_{t/2}^{t}\rVert u_2(s)\rVert_{L^2}^2 ds &\le\frac{C\delta}{t^{4}},
  \end{align*}
  for all $t\in [0,T]$.
  \end{corollary}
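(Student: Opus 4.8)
The plan is to transcribe, almost line for line, the proof of Corollary~\ref{Cor_IPM}, with the Laplacian-type quantities of the IPM replaced by their Bilaplacian counterparts; informally, the exponent $k$ appearing there is now played by the value $2$. Concretely, the assumption \eqref{assumption_delta_stokes} forces $\rVert\theta(t)\rVert_{H^3}\le\sqrt{\delta}\le\sqrt{\delta_0}$ for all $t\in[0,T]$, and $\theta(t)=\rho(t)-\rho_s$ vanishes on $\partial\Omega$ by Theorem~\ref{lwp_stokes}; hence, for $\delta_0$ small enough, Proposition~\ref{propoos} applies with $f=\rho(t)$ and yields
\[
C^{-1}\rVert\rho(t)-\rho_0^*\rVert_{L^2}^2\le E(t)\le C\rVert\rho(t)-\rho_0^*\rVert_{L^2}^2,\qquad \rVert\partial_1\rho(t)\rVert_{L^2}\ge C\,\rVert\rho(t)-\rho_0^*\rVert_{L^2}\ge C\,E(t)^{1/2}.
\]

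\textbf{Interpolating the forcing against the dissipation.} From \eqref{Stokes_perturbed1} we have $\partial_1\rho=\partial_1\theta=\Delta^2\Psi$. Since $\Delta^2\Psi$ carries two derivatives more than $\Delta\Psi$ while $\Delta^2\Psi\in H^2$ carries two derivatives more than $\Delta^2\Psi$, and since the boundary conditions $\Psi=\nabla\Psi=0$ permit passing between homogeneous and full Sobolev norms of $\Psi$ (via Poincar\'e, Lemma~\ref{bilap}, and the global elliptic estimate of Lemma~\ref{lemma_psi_stokes}), the Gagliardo--Nirenberg inequality gives
\[
\rVert\partial_1\rho\rVert_{L^2}=\rVert\Delta^2\Psi\rVert_{L^2}\le C\,\rVert\Delta^2\Psi\rVert_{H^2}^{1/2}\,\rVert\Delta\Psi\rVert_{L^2}^{1/2}=C\,\rVert\Delta^2\Psi\rVert_{H^2}^{1/2}\,K(t)^{1/4}.
\]
Squaring and combining with the coercivity above, $E(t)\le C\,\rVert\Delta^2\Psi(t)\rVert_{H^2}\,K(t)^{1/2}$, that is, $K(t)\ge C\,\rVert\Delta^2\Psi(t)\rVert_{H^2}^{-2}\,E(t)^2$.

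\textbf{Closing the ODE and applying the averaging lemmas.} Substituting this into the identity $\tfrac{d}{dt}E(t)=-K(t)$ from \eqref{officeteddy3_stokes} yields the differential inequality $\tfrac{d}{dt}E(t)\le -C\,a(t)^{-1}E(t)^2$ with $a(t):=\rVert\Delta^2\Psi(t)\rVert_{H^2}^2$. Lemma~\ref{ABC_ODE} with $\alpha=1$ and $n=2$ then produces $E(t)\le C\,A\,t^{-2}$, where $A=\int_0^t a(s)\,ds\le\delta$ by \eqref{assumption_delta_stokes}; this is the first asserted bound. For the remaining two, apply Lemma~\ref{ODEBDC} with $f=E$, $g=K$, $h=C\rVert u_2\rVert_{L^2}^2$ and $n=2$: its hypotheses are exactly the two relations in \eqref{officeteddy3_stokes} together with the bound on $E$ just obtained, and its conclusions are $\tfrac{2}{t}\int_{t/2}^t K(s)\,ds\le C\delta\,t^{-3}$ and $\tfrac{2}{t}\int_{t/2}^t\rVert u_2(s)\rVert_{L^2}^2\,ds\le C\delta\,t^{-4}$.

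The only step that is not a routine transcription is the interpolation: one must perform it with precisely the norms that match the rest of the scheme --- the low norm $\rVert\Delta\Psi\rVert_{L^2}=K^{1/2}$ being what the energy dissipation $-\tfrac{d}{dt}E$ controls, and the high norm $\rVert\Delta^2\Psi\rVert_{H^2}$ being what \eqref{assumption_delta_stokes} renders integrable in time --- and one must carefully justify the passage between homogeneous and inhomogeneous Sobolev norms of $\Psi$, where Lemma~\ref{lemma_psi_stokes} and Lemma~\ref{bilap} are essential. Once the exponents line up as $\alpha=1$, $n=2$, everything else is identical to the proof of Corollary~\ref{Cor_IPM}.
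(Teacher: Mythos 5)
Your proposal is correct and follows essentially the same route as the paper: Proposition~\ref{propoos} for the coercivity of $E(t)$ and the lower bound on $\rVert\partial_1\rho\rVert_{L^2}$, the Gagliardo--Nirenberg interpolation $\rVert\Delta^2\Psi\rVert_{L^2}\le C\rVert\Delta^2\Psi\rVert_{H^2}^{1/2}\rVert\Delta\Psi\rVert_{L^2}^{1/2}$ to obtain $\tfrac{d}{dt}E\le -C\,a(t)^{-1}E^2$ with $a(t)=\rVert\Delta^2\Psi\rVert_{H^2}^2$, then Lemma~\ref{ABC_ODE} with $\alpha=1$, $n=2$ and Lemma~\ref{ODEBDC} with $f=E$, $g=K$, $h=C\rVert u_2\rVert_{L^2}^2$. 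Your explicit verification of the hypotheses of Proposition~\ref{propoos} (smallness of $\rVert\theta\rVert_{H^3}$ and the boundary condition from Theorem~\ref{lwp_stokes}) is a harmless addition the paper leaves implicit.
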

 \begin{proof}
 Thanks to Proposition~\ref{propoos}, we know that there is a constant $C>0$ such that
 \begin{align}\label{compatibility_Stokes}
 C^{-1}\rVert \rho(t)-\rho_0^*\rVert_{L^2}^2\le E(t)\le C\rVert \rho(t)-\rho_0^*\rVert_{L^2}^2 .
 \end{align}
  Now, using the Gagliardo-Nirenberg interpolation theorem, we estimate
 \[
 \rVert \partial_1\rho\rVert_{L^2(\Omega)} =\rVert \Delta^2\Psi\rVert_{L^2}\le C \rVert \Delta^2\Psi\rVert_{H^2}^{1/2}\rVert \Delta \Psi\rVert_{L^2(\Omega)}^{1/2}.
 \]
 On the other hand, applying \eqref{hatedogs}, we observe that $\rVert \partial_1\rho\rVert_{L^2}\ge C\rVert \rho-\rho_0^*\rVert_{L^2}$. Combining this with the above estimate, we get
 \[
 \rVert \Delta \Psi\rVert_{L^2}^2\ge \rVert \Delta^2\Psi\rVert_{H^2}^{-2}\rVert \rho-\rho_0^*\rVert_{L^2}^4\ge  C \rVert \Delta^2 \Psi\rVert_{H^2}^{-2} E(t)^2.
 \]
 where the last inequality follows from \eqref{compatibility_Stokes}. Hence, the variation of the potential energy in  \eqref{officeteddy3_stokes} yields 
 \begin{align}\label{energyode_Stokes}
 \frac{d}{dt}E(t)\le - C\left(\rVert \Delta^2 \Psi\rVert_{H^2}^2\right)^{-1} E(t)^{2}.
 \end{align}
 Applying Lemma~\ref{ABC_ODE} with $\alpha=1>0$ and $n=2$ and $a(t)=\rVert \Delta^2\Psi\rVert_{H^2}^2$,  we get
 \[
 E(t)\le \frac{CA}{t^{2}}, \text{ where $A=\int_0^t \rVert \Delta^2 \Psi\rVert_{H^2}^{2}ds$.}
 \]
Under the assumptions in \eqref{assumption_delta}, we have $A\le C\delta$. Therefore,
 \begin{align}\label{chadl_stokes}
  E(t)\le \frac{C\delta}{t^{2}}.
 \end{align}
 Applying Lemma~\ref{ODEBDC} with $f(t)=E(t),\ g(t)=K(t),\ h(t)=C\rVert u_2(t)\rVert_{L^2}^2$, it follows from \eqref{officeteddy3_stokes} that
\[
\frac{2}{t}\int_{t/2}^{t} K(s)ds\le \frac{C}{t^{3}},\text{ and } \frac{2}{t}\int_{t/2}^{t}\rVert u_2(s)\rVert_{L^2}^2 ds \le\frac{C\delta}{t^{4}}.
\]
Together with \eqref{chadl_stokes}, we obtain the desired estimates.
 \end{proof}

\subsection{Proof of Theorem~\ref{main_Stokes}}  Let $\delta_0$ be  fixed so that  Proposition~\ref{main_Stokes_decay} and Corollary~\ref{Cor_Stokes} are applicable. We claim that there exists $\epsilon=\epsilon(\gamma,\rVert \rVert \rho_s\rVert_{H^4})>0$ such that if $\rho_0-{\rho}_s\in H^2_0(\Omega)\cap H^4(\Omega)$ and 
\begin{align}\label{initksd2sdsd}
\rVert\rho_0-{\rho}_s\rVert_{H^4}\le \epsilon\ll 1,\end{align}
 then for all $t>0$, it holds that
\begin{align}\label{exsproof1_Stokes}
\rVert \Delta^2 \theta(t)\rVert_{L^2(\Omega)}^2 +\int_0^t \rVert \Delta^2 \Psi(t)\rVert_{H^2(\Omega)}^2 dt< C\epsilon^2,\text{ for some $C>0$.}
\end{align}
Let us suppose for the moment that the claim is true. Then by Corollary~\ref{Cor_Stokes},  the solution $\rho(t)$ satisfies
\begin{align}\label{estimate_ipm12214}
E(t)\le \frac{C\epsilon^2}{t^2},\quad \frac{2}{t}\int_{t/2}^{t} K(s)ds\le \frac{C\epsilon^2}{t^{3}}, \quad 
   \frac{2}{t}\int_{t/2}^{t}\rVert u_2(s)\rVert_{L^2}^2 ds \le\frac{C\epsilon^2}{t^{4}},\text{ for all $t>0$}.
\end{align}
Especially, the potential energy estimate in \eqref{estimate_ipm12214} and \eqref{energy_nondegeneracy} give us all the necessary properties to establish Theorem~\ref{main_Stokes}. 

In the rest of the proof, we aim to prove \eqref{exsproof1_Stokes}. Towards a contradiction, let $T^*>0$  be the first time that \eqref{exsproof1_Stokes} breaks down, that is,
\begin{align}\label{reallyjoey_Stokes}
\rVert \Delta^2 \theta(T^*)\rVert_{L^2(\Omega)}^2 +\int_0^{T^*} \rVert \Delta^2 \Psi(t)\rVert_{H^2(\Omega)}^2 dt= M\epsilon^2\ll1 ,
\end{align} for some $M\gg 1$, which will be chosen later. Towards a contradiction, let us denote
\[
f(t):= \rVert \Delta^2\theta(t) \rVert_{L^2}^2,\quad g(t):=\rVert \Delta^2\Psi(t)\rVert_{H^2}^2.\]
Since $\theta_0:=\rho_0-\overline{\rho}\in H^2_0\cap H^4$,  it follows from Theorem~\ref{lwp_stokes} that $\theta(t)$ satisfies $\theta=\partial_2\theta=\partial_{22}\overline{\theta}=0$ on $\partial\Omega$. Then, Lemma~\ref{bilap} and our assumption on $T^*
$  tell us that
\begin{align}\label{estimate_stokes}
C^{-1}g(t) \le {C}^{-1}\rVert \theta(t)\rVert_{H^4}^2\le f(t)\le C\rVert \theta(t)\rVert_{H^4}^2\le CM\epsilon^2, \text{ for all $t\in[0,T^*]$},\quad f(0)\le C\epsilon^2,
\end{align}
where the initial condition is due to  \eqref{initksd2sdsd}. In particular, we have
\[
\rVert \partial_1\Delta \theta\rVert_{L^2}^2=\rVert \Delta \partial_1\theta\rVert_{L^2}^2 \ge \rVert \partial_1\theta\rVert_{H^2}^2 = \rVert \Delta^2\Psi\rVert_{H^2}^2 = g(t).
\] Using this and \eqref{estimate_stokes}, we derive from Proposition~\ref{Stokes_perturb2ed1} that
\begin{align}\label{rossgaller_stokes}
\frac{d}{dt}f(t)\le -Cg(t) + C\left(\rVert u_2\rVert_{H^3} +  \rVert u_2\rVert_{W^{2,\infty}}\right)f(t) +   C \rVert u_2\rVert_{H^3}\rVert \Delta^2\theta\rVert_{L^2}+\rVert \partial_1\theta\rVert_{L^2}^2,\text{ for $t\in [0,T^*]$.}
\end{align}
With this differential inequality, we first derive a crude estimate for "short time" first and more careful analysis will follows afterwards.
 
Using the Sobolev embedding $W^{2,\infty}(\Omega)\hookrightarrow H^4(\Omega)$, we have
\begin{align}\label{rtjsdsdsd1sd}
\rVert u_2\rVert_{H^3} +  \rVert u_2\rVert_{W^{2,\infty}}\le  \rVert u_2\rVert_{H^4} \le \rVert\nabla \Psi\rVert_{H^4} \le \rVert \partial_1\theta\rVert_{H^1}\le \sqrt{f(t)}, 
\end{align}
where the  last inequality follows from \eqref{estimate_stokes}. From this and \eqref{rossgaller_stokes} we see that $f$ satisfies
\[
\frac{d}{dt}f(t) \le C(f(t)^{3/2} + f(t))\le C f(t),\quad \text{ for $t\in[0,T^*]$}.
\]
Using $f(0)\le C\epsilon^2$, we can immediately deduce from this differential inequality that $f(t)\le C\epsilon^2e^{Ct},$ for $t\in [0,T^*]$. Since $g(t)\le Cf(t)$,  we can easily deduce that
\begin{align}\label{rjjsdpwdwd1sd}
f(t) +  \int_0^{t}g(s)ds\le C\epsilon^2 e^{Ct}\text{ for $t\in [0,T^*]$}.
\end{align}
Thus, for \eqref{reallyjoey_Stokes} to occur, we must have $M\epsilon^2 \le_C \epsilon^2 e^{CT^*}$. This gives us a lower bound of $T^*$,
\[
T^*\ge C \log M.
\]
Let us pick 
\begin{align}\label{lmiddlet}
T^*_M:=\log\log M\gg 1.
\end{align} Without loss of generality, we can assume $M$ is sufficiently large to ensure that 
\[
T^*_M < T^*.
\]
Then , it follows from \eqref{rjjsdpwdwd1sd} that\begin{align}\label{middleestimagte}
f(T^*_M) + \int_0^{T^*_M} g(t)dt \le C\epsilon^2 (\log M)^C.
\end{align}
 
  Now, we will estimate $f(t)$ more carefully for $t \ge T^*_M$. Using the Gagliardo-Nirenberg interpolation theorem, we see that 
\begin{align}\label{brovksd1sd}
\rVert u_2\rVert_{W^{2,\infty}} +\rVert u_2\rVert_{H^3}\le_C \rVert u_2\rVert_{L^2}^{2/5}\rVert u_2\rVert_{H^5}^{3/5}\le_C \rVert u_2\rVert_{L^2}^{2/5}\rVert\Delta^2 \Psi\rVert_{H^2}^{3/5}.
\end{align}
Hence, Young's inequality can give us
\begin{align}\label{spring1}
\left(\rVert u_2\rVert_{W^{2,\infty}} +\rVert u_2\rVert_{H^3} \right)f(t)&\le_C  \rVert u_2\rVert_{L^2}^{2/5}\rVert\Delta^2 \Psi\rVert_{H^2}^{3/5}f(t) \nonumber\\
& \le_C \eta\rVert \Delta^2\Psi\rVert_{H^2}^2 + C_\eta f(t)^{10/7}\rVert u_2\rVert_{L^2}^{4/7}\nonumber\\
&\le_C \eta g(t) + C_\eta (M\epsilon^2)^{10/7}\rVert u_2\rVert_{L^2}^{4/7}, \text{ for any $\eta>0$}.
\end{align}
Again using \eqref{brovksd1sd} and the estimate  $\rVert \theta(t)\rVert_{H^4}\le C\sqrt{M\epsilon^2}$ in \eqref{estimate_stokes}, we get
\begin{align}\label{spring2}
\rVert u_2\rVert_{H^3}\rVert \Delta^2\theta\rVert_{L^2}&\le_C\rVert u_2\rVert_{L^2}^{2/5}\rVert \Delta^2\Psi\rVert_{H^2}^{3/5}\sqrt{M\epsilon^2}\nonumber\\
& = \rVert u_2\rVert_{L^2}^{2/5}\sqrt{M\epsilon^2}g(t)^{3/10}\nonumber\\
& \le_C  \eta g(t)+  C_\eta\left(\sqrt{M\epsilon^2}\right)^{10/7}\rVert u_2\rVert_{L^2}^{4/7},\text{ for any $\eta>0$,}
\end{align}
where the last inequality follows from Young's inequality.
Similarly, the Gagliardo-Nirenberg interpolation theorem and the definition of $K(t)$ in \eqref{Stokes_potential} give us
\begin{align}\label{spring3}
\rVert \partial_1\theta\rVert_{L^2}^2=\rVert \Delta^2\Psi\rVert_{L^2}^2\le C\rVert \Delta^2\Psi\rVert_{H^2}\rVert \Delta \Psi\rVert_{L^2}\le \eta \rVert \Delta^2\Psi\rVert_{H^2}^2 + C_\eta \rVert \Delta\Psi\rVert_{L^2}^2\le_C \eta g(t) + C_\eta K(t).
\end{align}
Summing up this, \eqref{spring2} and \eqref{spring1}, we obtain
\begin{align*}
\left(\rVert u_2\rVert_{W^{2,\infty}} +\rVert u_2\rVert_{H^3} \right)f(t)& + \rVert u_2\rVert_{H^3}\rVert \Delta^2\theta\rVert_{L^2} + \rVert \partial_1\theta\rVert_{L^2}^2\\
&\le C\eta g(t)+C_\eta(\sqrt{M\epsilon^2})^{10/7}\rVert u_2\rVert_{L^2}^{4/7} + K(t).
\end{align*}
Substituting this into \eqref{rossgaller_stokes} and choosing $\eta$ small enough, we arrive at
\begin{align}\label{almostdone1}
\frac{d}{dt}f(t) +Cg(t)\le_C \left(\sqrt{M\epsilon^2}\right)^{10/7}\rVert u_2\rVert_{L^2}^{4/7}+K(t),\text{ for $t\in [0,T^*]$}.
\end{align}
 The estimate for $\rVert u_2\rVert_{L^2}$ in Corollary~\ref{Cor_Stokes} tells us that 
 \[
 \frac{2}{t}\int_{t/2}^{t} s^{1/4}\rVert u_2(s)\rVert_{L^2}^2ds\le C t^{1/4}\frac{2}{t}\int_{t/2}^t\rVert u_2(s)\rVert_{L^2}^2 ds\le C\frac{M\epsilon^2}{t^{15/4}},\text{ for all $t\in [0,T^*]$.}
 \]
Hence applying Lemma~\ref{fialmass} with $\alpha= 2/7$, $n=15/4$, we obtain
\[
\int_1^{T^*} t^{1/14}\rVert u_2(t)\rVert_{L^2}^{4/7}dt= \int_1^{T^*} \left(t^{1/4}\rVert u_2(t)\rVert_{L^2}^2\right)^{2/7}dt\le C \left(M\epsilon^2\right)^{2/7}.
\]
Thanks to \eqref{lmiddlet}, this implies 
\begin{align}\label{u2sdjsdj2}
\int_{T_M^*}^{T^*}\left(\sqrt{M\epsilon^2}\right)^{10/7}\rVert u_2(t)\rVert_{L^2}^{4/7}dt \le \left(\sqrt{M\epsilon^2}\right)^{10/7} (T_M^*)^{-1/14}\int_{T^*_M}^{T^*}t^{1/14}\rVert u_2(t)\rVert_{L^2}^{4/7}dt\le CM\epsilon^2(T^*_M)^{-1/14}.
\end{align}
Similarly, for the term $K(t)$ in \eqref{almostdone1}, we use Corollary~\ref{Cor_Stokes} to see
\[
\frac{2}{t}\int_{t/2}^{t}s^{1/14}K(s)ds \le C t^{1/14}\frac{2}{t}\int_{t/2}^{t} K(s)ds\le C\frac{M\epsilon^2}{t^{41/14}}.
\]
Applying Lemma~\ref{fialmass} with $\alpha=1$ and $n=41/14$, we get $\int_1^{T^*}t^{1/14}K(t)dt \le CM\epsilon^2$. 
Therefore
\[
\int_{T^*_M}^{T^*}K(s)ds\le \left(T^*_M\right)^{-1/14}\int_{T^*_M}^{T^*}s^{1/14} K(s)ds\le C(T^*_M)^{-1/14}M\epsilon^2.
\]
Combining this with \eqref{u2sdjsdj2}, integrating \eqref{almostdone1} over $t\in [T_M^*,T^*]$ gives 
\[
f(T^*) + \int_{T_M^*}^{T^*} g(s) ds\le_C  f(T^*_M) + \left( T^*_M\right)^{-1/14} M\epsilon^2
\]
Together with \eqref{middleestimagte} and \eqref{lmiddlet}, this estimate finally gives us
\[
f(T^*)+\int_{0}^{T^*}g(t)dt\le_CM\epsilon^2\left(M^{-1}(\log M)^C + (\log\log M)^{-1/14} \right).
\]
Comparing this to \eqref{reallyjoey_Stokes}, we must have
\[
1\le C\left(M^{-1}(\log M)^C + (\log\log M)^{-1/14} \right),
\]
which leads to a contradiction if $M$ is chosen sufficiently large depending only on the implicit constant $C$. This finishes the proof.

\begin{appendix}
\section{Proof of Lemma~\ref{onedirection}}\label{Proof_oflemma3_1}
We argue by induction on $k$, that is, we will prove that for every $k\in \mathbb{N}$, 
\begin{align}\label{induction_pre}
\rVert \partial_{1}^n\partial_2^kf\rVert_{L^2}\le C_{n,k}\left( \rVert \partial_1^{n+k}f\rVert_{L^2} +\rVert \partial_2^{n+k}f\rVert_{L^2}  \right),\text{ for all $n\in\mathbb{N}\cup\left\{ 0\right\}$ and $f\in X^\infty$.}
\end{align}
When $k=0$, the above estimate  holds true trivially. Now, let us aim to prove \eqref{induction_pre} for $k\to k+1$, assuming that the statement is true for some $k\ge0$.
We split the cases $n=0, 1$ and $n\ge 2$.

\textbf{Case: $n=0$.} It is trivial that
\begin{align}\label{neqzero}
\rVert \partial_2^{k+1}f\rVert_{L^2}\le C_{k}\left( \rVert \partial_1^{k+1}f\rVert_{L^2} +\rVert \partial_2^{k+1}f\rVert_{L^2}  \right).
\end{align}

\textbf{Case: $n=1$.} We have
\begin{align*}
\rVert \partial_1 \partial_2^{k+1}f\rVert_{L^2}^2 =\int_\Omega \partial_1\partial_2^{k+1}f(x)\partial_1\partial_2^{k+1}f(x)dx &= -\int_{\Omega}\partial_{11}\partial_2^{k+1}f(x)\partial_2^{k+1}f(x)dx = \int_{\Omega} \partial_{11}\partial_2^{k}f(x)\partial_{2}^{k+2}f(x)dx\\
&\le \rVert \partial_{11}\partial_2^kf\rVert_{L^2}^2 + \rVert \partial_2^{k+2}f\rVert_{L^2}^2,
\end{align*}
where the integration by parts to get the second equality is justified by \eqref{Xprop}. Using the induction hypothesis \eqref{induction_pre} for $k$, we have $ \rVert \partial_{11}\partial_2^kf\rVert_{L^2}^2\le C_k\left(\rVert \partial_1^{k+2}f\rVert_{L^2}^2 +\rVert \partial_2^{k+2}f\rVert_{L^2}^2 \right)$, hence
\begin{align}\label{neqone}
\rVert \partial_1 \partial_2^{k+1}f\rVert_{L^2}^2 \le C_k\left(\rVert \partial_1^{k+2}f\rVert_{L^2}^2 +\rVert \partial_2^{k+2}f\rVert_{L^2}^2 \right).
\end{align}

\textbf{Case: $n\ge 2$}. 
 We claim that
 \begin{align}\label{second_induction}
 \rVert \partial_{1}^{n-j}\partial_2^{k+1+j}f\rVert^2_{L^2(\Omega)}\le \frac{1}{2}\left(\rVert \partial_1^{n-j+1}\partial_2^{k+j}f\rVert_{L^2}^2  + \rVert\partial_1^{n-j-1}\partial_2^{k+j+2}f\rVert_{L^2}^2\right),\text{ for all $0\le j\le n-1$.}
 \end{align}
 Suppose \eqref{second_induction} holds true for the moment. Then  we have 
 \begin{align}\label{twro2}
 \sum_{j=0}^{n-1}  \rVert \partial_{1}^{n-j}\partial_2^{k+1+j}f\rVert^2_{L^2(\Omega)} &\le \frac{1}{2} \sum_{j=0}^{n-1}\rVert \partial_1^{n-j+1}\partial_2^{k+j}f\rVert_{L^2}^2  +\frac{1}{2} \sum_{j=0}^{n-1} \rVert\partial_1^{n-j-1}\partial_2^{k+j+2}f\rVert_{L^2}^2=:S_1+S_2
 \end{align}
 The first summation in the right-hand side can be written as
 \begin{align*}
 S_1  &= \frac{1}{2}\left(\rVert \partial_1^{n+1}\partial_2^{k}f\rVert_{L^2}^2 + \sum_{j=0}^{n-2}\rVert\partial_1^{n-j}\partial_2^{k+j+1}f\rVert_{L^2}^2\right)\\
 &= \frac{1}{2}\left(\rVert \partial_1^{n+1}\partial_2^{k}f\rVert_{L^2}^2 +\rVert \partial_1^n\partial_2^{k+1}f\rVert_{L^2}^2+ \sum_{j=1}^{n-2}\rVert\partial_1^{n-j}\partial_2^{k+j+1}f\rVert_{L^2}^2\right)
 \end{align*}
 and the second summation can be written as
 \begin{align*}
 S_2 &=\frac{1}{2}\left(\sum_{j=1}^{n-1}\rVert\partial_1^{n-j}\partial_2^{k+j+1}f\rVert_{L^2}^2 + \rVert \partial_2^{n+k+1}f\rVert_{L^2}^2\right)\\
 &=\frac{1}{2}\left(\sum_{j=1}^{n-2}\rVert\partial_1^{n-j}\partial_2^{k+j+1}f\rVert_{L^2}^2 + \rVert\partial_1^{1}\partial_2^{k+n}f\rVert_{L^2}^2 + \rVert \partial_2^{n+k+1}f\rVert_{L^2}^2 \right)
  \end{align*}
 Summing them up,  we can write the right-hand side of \eqref{twro2}  as
 \begin{align*}
 S_1+S_2 = \sum_{j=1}^{n-2}  \rVert \partial_{1}^{n-j}\partial_2^{k+1+j}f\rVert^2_{L^2(\Omega)} +\frac{1}{2}\left(\rVert \partial_1^{n+1}\partial_2^{k}f\rVert_{L^2}^2 +\rVert \partial_1^n\partial_2^{k+1}f\rVert_{L^2}^2 + \rVert\partial_1\partial_2^{k+n}f\rVert_{L^2}^2 + \rVert \partial_2^{n+k+1}f\rVert_{L^2}^2 \right)
 \end{align*}
 Plugging this into \eqref{twro2}, and after the cancellations of the summation in $j$ over $1$ to $n-2$, we obtain
 \[
 \rVert\partial_1^n\partial_2^{k+1}f\rVert_{L^2}^2+\rVert \partial_1\partial_2^{k+n}f\rVert_{L^2}^2\le \frac{1}{2}\left(\rVert \partial_1^{n+1}\partial_2^{k}f\rVert_{L^2}^2 +\rVert \partial_1^n\partial_2^{k+1}f\rVert_{L^2}^2 + \rVert\partial_1\partial_2^{k+n}f\rVert_{L^2}^2 + \rVert \partial_2^{n+k+1}f\rVert_{L^2}^2 \right),
 \]
 which is equivalent to
 \[
  \rVert\partial_1^n\partial_2^{k+1}f\rVert_{L^2}^2+\rVert \partial_1\partial_2^{k+n}f\rVert_{L^2}^2 \le \rVert \partial_1^{n+1}\partial_2^{k}f\rVert_{L^2}^2  +  \rVert \partial_2^{n+k+1}f\rVert_{L^2}^2.
 \]
 Hence, dropping the second term in the left-hand side, we get
 \[
   \rVert\partial_1^n\partial_2^{k+1}f\rVert_{L^2}^2\le  \rVert \partial_1^{n+1}\partial_2^{k}f\rVert_{L^2}^2  +  \rVert \partial_2^{n+k+1}f\rVert_{L^2}^2\le C_{n,k}\left( \rVert \partial_1^{n+k+1}f\rVert_{L^2}^2+ \rVert \partial_2^{n+k+1}f\rVert_{L^2}^2\right),
 \]
 where we used the induction hypothesis \eqref{induction_pre} for $k$. Combining this with \eqref{neqzero} and \eqref{neqone}, we see that the statement \eqref{induction_pre} holds true for all $n$, for $k+1$. The only missing part is a proof of \eqref{second_induction}.

\textbf{Proof of \eqref{second_induction}.}
  It is straightforward that
  \begin{align*}
   \rVert \partial_{1}^{n-j}\partial_2^{k+1+j}f\rVert^2_{L^2(\Omega)}&= \int_{\Omega} \partial_{1}^{n-j}\partial_2^{k+1+j}f(x) \partial_{1}^{n-j}\partial_2^{k+1+j}f(x)dx\\
   & =-\int _{\Omega} \partial_1\partial_{1}^{n-j}\partial_2^{k+1+j}f(x) \partial_{1}^{n-j-1}\partial_2^{k+1+j}f(x)dx\\
   & = \int_{\Omega} \partial_{1}^{n-j+1}\partial_2^{k+j}f(x) \partial_{1}^{n-j-1}\partial_2^{k+2+j}f(x)dx\\
   &\le \frac{1}{2}\rVert\partial_1^{n-j+1}\partial_2^{k+j}f\rVert_{L^2}^2 + \frac{1}{2}\rVert\partial_1^{n-j-1}\partial_2^{k+2+j}f\rVert_{L^2}^2, 
  \end{align*}
  where the second and the third equality follow from the integration by parts which is valid due to \eqref{Xprop} and the last inequality follows from Young's inequality. This proves the claim \eqref{second_induction}.
\end{appendix}

 \bibliographystyle{abbrv}
\bibliography{references}

\end{document}